\newtheorem{theorem}{Theorem}[section]
\newtheorem{lemma}[theorem]{Lemma}
\newtheorem{proposition}[theorem]{Proposition}
\newtheorem{definition}[theorem]{Definition}
\newtheorem{cor}[theorem]{Corollary}
\newtheorem{prob}[theorem]{Problem}
\newtheorem*{rmk*}{Remark}
\newcommand{\md}[1]{\ensuremath{(\mbox{mod}\, #1)}}
\newcommand{\mdsub}[1]{\ensuremath{(\mbox{\scriptsize mod}\, #1)}}
\newcommand{\mdlem}[1]{\ensuremath{(\mbox{\textup{mod}}\, #1)}}
\newcommand{\mdsublem}[1]{\ensuremath{(\mbox{\scriptsize \textup{mod}}\, #1)}}
\DeclareMathOperator{\Span}{span}
\DeclareMathOperator\supp{supp}
\DeclareMathOperator{\Proj}{Proj}
\ifodd\value{page}
  {\small BENJAMIN BEDERT}
\title{Large sum-free subsets of sets of integers via $L^1$-estimates for trigonometric series}
\author{Benjamin Bedert}
\begin{document}

\begin{abstract}
A set $B$ is said to be \emph{sum-free} if there are no $x,y,z\in B$ with $x+y=z$. We show that there exists a constant $c>0$ such that any set $A$ of $n$ integers contains a sum-free subset $A'$ of size $|A'|\geqslant n/3+c\log \log n$. This answers a longstanding problem in additive combinatorics, originally due to Erd\H{o}s.

\end{abstract}

\maketitle
\tableofcontents
\section{Introduction}
A set $B$ is said to be sum-free if it contains no three elements $x,y,z$ with $x+y=z$. The study of sum-free sets can be
traced back to Schur \cite{schur}, who introduced the concept in proving that Fermat’s last theorem does not hold in $\mathbf{Z}/p\mathbf{Z}$. There is a substantial amount of literature on sum-free sets, most of which we will not discuss here; the interested reader may consult the survey \cite{taovu} of Tao and Vu. One of the most central open questions in this area is that of determining the quantity $S(N)$ which is defined to be the largest integer $S$ such that any set of $N$ positive integers contains a sum-free subset of size $S$. Let us write $S(A)$ for the size of the largest sum-free subset of $A$, so that
\begin{equation}\label{S(A)defi}
    S(N)\vcentcolon= \min_{A\subset\mathbf{N}:|A|=N}S(A).
\end{equation}
The following classical argument of Erd\H{o}s \cite{erdos} proves that $S(A)\geqslant |A|/3$ for any $A\subset\mathbf{Z}\setminus\{0\}$. Let $\mathbf{T}=\mathbf{R}/\mathbf{Z}$ and note that the interval $(1/3,2/3)\subset\mathbf{T}$ is sum-free. Hence, for any $x\in \mathbf{T}$ the set $A_x=\{a\in A: ax\md 1\in (1/3,2/3)\}$ is sum-free. One can conclude by observing that $\mathbb{E}_{x\in\mathbf{T}}|A_x|=\sum_{a\in A}\mathbb{P}(ax\md1\in (1/3,2/3))=|A|/3$ which shows that one of the sum-free sets $A_x$ must have size at least $|A|/3$. 

\medskip

Despite its simplicity, only minor improvements over this lower bound have been obtained. Alon and Kleitman \cite{alonkleitman} observed that Erd\H{o}s's argument may in fact be improved to give $S(N)\geqslant (N+1)/3$. The best bound $S(N)\geqslant (N+2)/3$ before this work was established in a celebrated paper of Bourgain \cite{bourgain} using an elaborate Fourier analytic approach. Recent work of Shakan \cite{shakan} provides an alternative proof of Bourgain's bound using a similar method. It has been a long-standing open problem to find a more substantial improvement over Erd\H{o}s's lower bound and this question appears in the work of various authors such as \cite{alonkleitman, bourgain, eberhardgreenmanners,erdos, guy, jingwu1, jingwu2,lewko,shakan,taovu,taovubook}. The main problem is to prove the following widely believed estimate for $S(N)$ which asserts that one can improve the Erd\H{o}s-Alon/Kleitman-Bourgain bounds by an arbitrarily large constant. This problem is also listed as Problem 1 on Green's list \cite{green} of 100 open problems.
\begin{prob}\label{conj:sumfree}
    Is there a function $\omega(N)\to\infty$ such that $S(N)\geqslant \frac{N}{3}+\omega(N)$?
\end{prob}  
Our aim in this paper is to establish the following theorem confirming this. 
\begin{theorem}\label{th:main}
    There exists some constant $c>0$ such that for all finite sets $A\subset\mathbf{Z}$ we have $S(A)\geqslant \frac{|A|}{3}+c\log \log |A|$. In particular, $S(N)\geqslant \frac{N}{3}+c\log\log N$.
\end{theorem}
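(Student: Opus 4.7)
The plan is to sharpen Erd\H{o}s's random-dilation argument by proving a one-sided $L^\infty$ lower bound on the integer-valued function $f(x)=\sum_{a\in A}\chi(ax)$ on $\mathbf{T}$, where $\chi$ is the indicator of $(1/3,2/3)$. Since $\int f=|A|/3$ and $f$ is integer-valued, it suffices to show
\[
\|f-|A|/3\|_{L^\infty(\mathbf{T})}\geqslant c\log\log|A|.
\]
To bring in finite Fourier analysis cleanly, I would first reduce to a cyclic group $\mathbf{Z}/p\mathbf{Z}$ for a large prime $p$ on which $A$ embeds injectively, replacing $(1/3,2/3)$ by its discrete middle-third interval $I_p\subset\mathbf{Z}/p\mathbf{Z}$.

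Expanding the indicator of $I_p$ in its Fourier series and writing $S_A(y)=\sum_{a\in A}e(ay/p)$, one obtains
\[
f(t)-\frac{|A|}{3}=\sum_{k\neq 0}\hat\chi(k)\,S_A(kt)
\]
for $t\in\mathbf{Z}/p\mathbf{Z}$, exhibiting the deviation of $f$ from its mean as a concrete trigonometric polynomial with Fourier coefficients decaying like $1/k$ and frequency support dictated by the dilation structure $\{ka:a\in A,\,k\neq 0\}$. The task becomes a one-sided $L^\infty$ estimate for such polynomials.

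The heart of the argument is a structure/pseudorandomness dichotomy on $A$. If some nontrivial Fourier coefficient $|\widehat{1_A}(\xi)|$ is unusually large, then $A$ concentrates on a Bohr set (or short GAP), and a sum-free subset of size $|A|/3+c\log\log|A|$ can be extracted by a direct combinatorial argument, for instance by running Erd\H{o}s's argument inside the substructure where the average value of $f$ is already elevated. Otherwise $A$ is Fourier-pseudorandom, the sums $S_A(k\cdot)$ behave generically, and Konyagin's and McGehee--Pigno--Smith's resolution of Littlewood's conjecture yields $\|S_A\|_{L^1}\gtrsim \log|A|$. Propagating this lower bound through the $1/k$-weighted sum above, and then converting to an $L^\infty$ bound via the integrality of $f$, produces the $\log\log|A|$ gain: one logarithm is consumed in the $L^1$-to-$L^\infty$ conversion (using that $f-|A|/3$ is bounded pointwise by $|A|$ and takes only integer shifts of the mean), and another is consumed in the harmonic averaging across the weights $\hat\chi(k)$.

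The principal obstacle is to extract a \emph{lower}, rather than merely upper, $L^1$ bound on $f-|A|/3$. A triangle-inequality application of the Littlewood theorem only controls $\|f-|A|/3\|_{L^1}$ from above. To obtain the needed lower bound one must exploit near-orthogonality of the sub-sums $S_A(k\cdot)$ for distinct $k$, which holds precisely under the pseudorandomness assumption, together with the sign-oscillation pattern of $\hat\chi(k)$, in order to rule out massive cancellation. Balancing this with the structured case of the dichotomy, so that neither branch surrenders the $\log\log$ gain, is the core technical challenge of the proof.
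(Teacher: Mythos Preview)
Your proposal identifies the right object of study (the function $f-|A|/3$ and its Fourier expansion) and the right general framework (a structure/pseudorandomness dichotomy), but the specific mechanism you describe for producing the $\log\log|A|$ gain does not work, and the actual content of the argument is missing.

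First, your accounting of the logarithms is essentially Bourgain's argument, and it only yields an $O(1)$ improvement, not $\log\log|A|$. The Littlewood bound $\lVert c_A\rVert_1\gg\log|A|$ holds for \emph{every} set $A$, pseudorandom or not, so ``pseudorandomness'' gives you nothing extra there. Sifting out primes up to $Q$ (your ``harmonic averaging across the weights $\hat\chi(k)$'') costs a factor $\log Q$, and to kill the error term $R_Q$ one needs $Q$ polynomial in $N$, so the two logarithms cancel and one is left with $\lVert F\rVert_1\gg 1$. This is exactly the point at which Bourgain's approach stops, as noted in the paper. There is no further ``$L^1$-to-$L^\infty$ conversion'' cost: the inequality $\max_x(-F(x))\geqslant\tfrac12\lVert F\rVert_1$ follows directly from $\int F=0$ and loses nothing.

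Second, the structured branch (``a sum-free subset can be extracted by a direct combinatorial argument'') is where almost all the work lies, and you have not supplied it. The paper's route is quite different from a Bohr-set density increment: it proves an \emph{inverse theorem} showing that if $\lVert F_A\rVert_1$ is small then $A$ has small additive dimension (Theorem~\ref{th:smallL^1smalldim}), passes to a dense $F_4$-isomorphic model inside $[-T,T]$ with $\log T\ll(\log N)^{O(1)}$, and then analyses the distribution of $A$ in residue classes modulo products of small primes. The decisive new input is a \emph{non-Archimedean} variant of the McGehee--Pigno--Smith construction, where the test function is built by ascending a chain of $p$-adic residue classes rather than by the usual Archimedean ordering of the spectrum. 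This is how the paper avoids the loss of the full $\log Q$ factor and extracts the $\log\log N$. Your final paragraph correctly names the obstacle (ruling out cancellation among the sub-sums $S_A(k\cdot)$) but the proposed fix via ``near-orthogonality under pseudorandomness'' is not what is needed and is not developed into an argument.
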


We further prove a strong structural result for sets where $S(A)\leqslant N/3+C$ which provides non-trivial information about the global structure of $A$, even for a much larger value of $C$ than $\log\log N$.
\begin{theorem}[99\% Structure Theorem]\label{th:mainstructure}
    Let $A\subset\mathbf{Z}\setminus\{0\}$ be a set of size $N$ with $S(A)\leqslant N/3+C$. Then $A$ has a Freiman-isomorphic copy inside $[-N^{C^{O(1)}},N^{C^{O(1)}}]$.\footnote{By a Freiman isomorphism, we mean an $F_4$-isomorphism as in Definition \ref{def:F_4isomo}.} Moreover, for any parameter $K>1$, we can find a partition
\begin{equation}
A=\left(\bigcup_{j=1}^sA_j\right)\cup B
\end{equation}
which has the following properties.
\begin{itemize}
    \item [(i)] For each $j\in[s]$, the set $A_j$ has size $|A_j|\gg (KC)^{-O(1)}N$ and small doubling $|A_j-A_j|\leqslant (CK)^{O(1)}|A_j|$. In particular, $A_j$ is contained in a generalised arithmetic progression $P_j$ of dimension $d_j\ll (KC)^{O(1)}$ and of size $|P_j|\ll e^{(KC)^{O(1)}}|A_j|$.
    \item [(ii)] The set $B$ is small: $|B|\ll (KC)^{-10}N$.
\end{itemize}
\end{theorem}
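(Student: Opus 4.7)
The natural starting point is to extract an $L^1$-estimate from the hypothesis. Let $g = \mathbb{1}_{(1/3,2/3)}$ on $\mathbf{T}$ and set $f(x) = \sum_{a\in A} g(ax) = |A_x|$, so that every $A_x$ is sum-free and hence $f(x) \leqslant N/3 + C$ pointwise. Combined with Erd\H{o}s's observation that $\int_{\mathbf{T}} f = N/3$, this yields
\begin{equation*}
\|f - N/3\|_{L^1(\mathbf{T})} \leqslant 2C.
\end{equation*}
Expanding $g$ in Fourier series on $\mathbf{T}$ writes $f(x) - N/3$ as a trigonometric polynomial in $x$ whose frequencies are the integers $ka$ for $k \neq 0$ and $a \in A$, with coefficients determined by $\hat{g}$. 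This small-$L^1$ bound on a trigonometric polynomial built out of $A$ is the essential analytic input.

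The second step is to convert this $L^1$ control into Fourier-analytic structure on $A$. The general philosophy, hinted at by the title, is that a trigonometric polynomial with small $L^1$-norm must have its frequencies arranged in a highly additive fashion. Via quantitative $L^1$ lower bounds of Konyagin or McGehee--Pigno--Smith type, and a Chang-style dimension reduction, one expects to show that the large spectrum
\begin{equation*}
\mathrm{Spec}_\varepsilon(A) := \left\{ \xi \in \mathbf{T} : \left|\textstyle\sum_{a\in A} e(a\xi)\right| \geqslant \varepsilon N \right\}
\end{equation*}
lies in a bounded number of translates of a Bohr set whose dimension is polynomial in $C$ and $1/\varepsilon$.

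Given spectral information of this strength, the decomposition $A = (\bigcup_j A_j) \cup B$ follows from standard additive-combinatorial machinery. A Bogolyubov-type argument on the spectral Bohr set produces structured sets inside $A - A$; Balog--Szemer\'edi--Gowers, Pl\"unnecke--Ruzsa and Freiman's theorem then iteratively peel off large subsets $A_j$ of small doubling, each contained in a GAP $P_j$ of the stated dimension and size. By tuning the iteration parameters one forces the remainder to satisfy $|B| \ll (KC)^{-10}N$. Finally, once $A$ is covered by the structured pieces $P_j$, a covering-and-rescaling argument together with the quantitative preservation of $F_4$-isomorphism inside GAPs gives an $F_4$-isomorphic embedding of $A$ into an interval of length $N^{C^{O(1)}}$. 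The main obstacle will be the spectral step: extracting from the $L^1$-bound structural conclusions strong enough to yield the polynomial-in-$(KC)$ bounds, and in particular the tenth power required in the size of $B$, will almost certainly require iterating the $L^1$-estimate (rather than a single application) and carefully tracking losses in the dimension of the ambient Bohr set.
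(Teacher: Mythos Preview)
Your proposal identifies the correct starting point---the bound $\lVert F_A\rVert_1\ll C$---and correctly anticipates that Balog--Szemer\'edi--Gowers and Freiman enter at the end. But two of your intermediate steps diverge from the paper, and one contains a genuine gap.

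The embedding is the problematic step. You propose to obtain the $F_4$-isomorphic copy in $[-N^{C^{O(1)}},N^{C^{O(1)}}]$ \emph{after} the partition, by covering-and-rescaling the GAPs $P_j$; but the remainder $B$ lies in none of the $P_j$, yet all of $A$ must be embedded. The paper obtains the embedding \emph{before} and \emph{independently of} the partition, via a dimension bound: Theorem~\ref{th:smallL^1smalldim} shows that small $\lVert\hat f\rVert_1$ forces every dissociated subset of $\{n:|f(n)|\geqslant 1\}$ to be small, giving $\dim(A)\ll C^{O(1)}\log N$, and the rectification argument of Theorem~\ref{th:densemodel} then converts this into a dense model. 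There is also a technical point you skip: $1_{(1/3,2/3)}$ has infinite Fourier support, so $f-N/3$ is not a trigonometric polynomial and the frequencies $na$ for varying $n$ obstruct any direct spectral conclusion about $A$. The paper handles this via Bourgain's sifting (Proposition~\ref{prop:basicestimate}), passing to $c_A+R_Q$ at a cost of $\log Q$ in the $L^1$-norm; the tail $R_Q$ is only controllable once $A$ already sits in a short interval, so the argument bootstraps: crude model (Corollary~\ref{cor:densemodel}) $\to$ sift and bound $R_Q$ via Lemma~\ref{lem:coefbound1} $\to$ sharper dimension bound $\to$ sharper model. Your closing remark about ``iterating the $L^1$-estimate'' gestures at this, but the actual iteration is not a repeated spectral decomposition.

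For the partition itself, the paper bypasses the large spectrum, Bohr sets and Bogolyubov entirely. Corollary~\ref{cor:smallL^1largeenergy} (a consequence of the McGehee--Pigno--Smith-style Theorem~\ref{th:smallL^1largeenergy}) shows directly that every subset $X\subset A$ satisfies $E(X)\gg C^{-O(1)}|X|^4/N$, once the $L^2$ norm of the cleaned-up function is $\ll N^{1/2}$---which is exactly what the dense model and the sifting provide. From this uniform energy bound the iterative BSG extraction is immediate and the polynomial dependence on $CK$ is transparent. Your Bohr-set route might be salvageable, but it is a detour and tracking the losses to reach the required $(CK)^{-10}$ would be harder.
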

Though not the topic of study in this paper, we briefly discuss progress on the upper bounds for $S(N)$. This bound has been improved many times; we summarise this in the following table.
\begin{center}
\begin{tabular}{ |c|c| } 
\hline
Author & Value of $c$ s.t. $S(N)\leqslant cN+o(N)$ \\
\hline
Hinton \cite{erdos} & $7/15$ \\
Klarner \cite{erdos} & $3/7$ \\
Alon, Kleitman \cite{alonkleitman} & $12/29$ \\
Malouf, Furedi \cite{malouf,guy} & $2/5$ \\
Lewko \cite{lewko} & $11/28$ \\
Alon \cite{alon} & $11/28-\varepsilon$ \\
Eberhard, Green and Manners \cite{eberhardgreenmanners} & $1/3$ \\
\hline
\end{tabular}
\end{center}
The first five of these bounds were obtained using increasingly better explicit constructions of sets $A$ for which $S(A)\leqslant c|A|$, where $c$ is the corresponding constant in the bounds above. The breakthrough paper of Eberhard, Green and Manners \cite{eberhardgreenmanners} establishes an upper bound $S(N)\leqslant \frac{N}{3}+o(N)$ which matches Erd\H{o}s's lower bound up to a function which is $o(N)$. Their proof employs an elegant argument based on the arithmetic regularity lemma which leads to a more-or-less ineffective bound for $o(N)$; determining a reasonable upper bound remains an interesting problem. Contrary to the arguments that came before, the sets $A$ with $S(A)\leqslant \frac{|A|}{3}+o(|A|)$ whose existence is proved by Eberhard, Green and Manners are not explicitly constructible, but Eberhard \cite{eberhard} later provided explicit examples of such sets. 

\section{Setup and paper overview}

We begin by discussing a well-known strategy for obtaining lower bounds for $S(N)$, dating back (in its simplest form) to the work of Erd\H{o}s \cite{erdos}. Let $A\subset\mathbf{Z}$ have size $N$. By simply removing $0$ if it lies in $A$, it is sufficient to establish Theorem \ref{th:main} for sets $A\subset\mathbf{Z}\setminus\{0\}$ so we assume throughout the rest of this paper that $0\notin A$. We write $\mathbf{T}=\mathbf{R}/\mathbf{Z}$ for the one-dimensional torus and recall that the interval $(1/3,2/3)\subset\mathbf{T}$ is sum-free. Hence, for any $x\in \mathbf{T}$ the set $\{a\in A: ax\md 1\in (1/3,2/3)\}$ is sum-free and we deduce the following important basic estimate
\begin{equation*}\label{basicestimate1}
    S(A)\geqslant\max_{x\in \mathbf{T}}\sum_{a\in A}\phi(ax)=\frac{N}{3}+\max_{x}\sum_{a\in A}(\phi-1/3)(ax),
\end{equation*}
where $\phi$ is the characteristic function of the interval $(1/3,2/3)$. Noting that $$\max_x\sum_{a\in A}(\phi-1/3)(ax)\geqslant\int_\mathbf{T}\sum_{a\in A}(\phi-1/3)(ax)\,dx=|A|\int_0^1\phi(x)\,dx-|A|/3=0$$ recovers Erd\H{o}s's bound and Bourgain obtained his improvement by showing that $\max_x\sum_{a\in A}(\phi-1/3)(ax)>1/3$ using the Fourier-theoretic properties of $\phi-1/3$. 

\medskip

Before stating our main theorem, we introduce the following strong notion of Freiman isomorphism.
\begin{definition}\label{def:F_4isomo}
Let $G,G'$ be Abelian groups and let $A\subset G$, $A'\subset G'$. We say that a map $\phi:A\to A'$ is an \textit{$F_4$-homomorphism} if whenever $a_i\in A$ and $\varepsilon_i\in\{-1,0,1\}$ for $i\in[4]$ satisfy $$\sum_{i=1}^4\varepsilon_ia_i=0,$$
then $$\sum_{i=1}^4\varepsilon_i\phi(a_i)=0.$$ We say that $A$ and $A'$ are \textit{$F_4$-isomorphic} if there is a bijective $F_4$-homomorphism $\phi:A\to A'$ so that $\phi^{-1}$ is also an $F_4$-homomorphism.
\label{frei}
\end{definition}

It is obvious that $S(A)=S(A')$ for $F_4$-isomorphic sets. Our main result is to prove the following theorem which clearly implies Theorem \ref{th:main}.
\begin{theorem}
    Let $A\subset\mathbf{Z}\setminus\{0\}$. Then there exists a set $B\subset\mathbf{Z}\setminus\{0\}$ which is $F_4$-isomorphic to $A$ and which satisfies
    $$\max_{x}\sum_{b\in B}\left(\phi-\frac{1}{3}\right)(bx)\gg\log\log |B|.$$
    
\end{theorem}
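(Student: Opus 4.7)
The plan is to reduce the desired $L^\infty$ lower bound to an $L^1$-estimate for the mean-zero function $f_B(x):=\sum_{b\in B}(\phi-1/3)(bx)$, then establish this $L^1$-estimate by Fourier-analytic arguments in the spirit of the Konyagin--McGehee--Pigno--Smith theorem, iterating a structural dichotomy to extract a $\log\log$ gain.

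\medskip

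\textbf{From $L^\infty$ to $L^1$.} Since $\phi-1/3$ has mean zero on $\mathbf{T}$, so does $f_B$; hence $\int_\mathbf{T} (f_B)_+\, dx = \tfrac{1}{2}\|f_B\|_1$. As $(f_B)_+$ is supported on a set of measure at most $1$ and is bounded pointwise by $\max_x f_B$,
\[
\max_x f_B(x)\;\geq\;\int_\mathbf{T} (f_B)_+\, dx\;=\;\tfrac{1}{2}\|f_B\|_1,
\]
so it suffices to find some $F_4$-isomorphic copy $B$ of $A$ with $\|f_B\|_1\gg\log\log|B|$.

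\medskip

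\textbf{Fourier expansion and the benchmark.} Writing $\phi-1/3=\sum_{k\neq 0}\hat\phi(k)\,e^{2\pi ik\cdot}$, with $\hat\phi(k)=0$ for $3\mid k$, $k\neq 0$ and $|\hat\phi(k)|\asymp 1/|k|$ otherwise, we have
\[
f_B(x)\;=\;\sum_{k\neq 0,\,3\nmid k}\hat\phi(k)\sum_{b\in B}e^{2\pi ikbx}.
\]
The natural benchmark is the Konyagin--McGehee--Pigno--Smith lower bound $\|\sum_{b\in B}e^{2\pi ib\cdot}\|_1\gg\log|B|$. Substituting this naively only recovers a constant-sized improvement to $\max f_B$, corresponding to Bourgain's bound; the $\log\log$ gain must come from a refined $L^1$-estimate sensitive to the arithmetic structure of $B$.

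\medskip

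\textbf{Iteration via structural dichotomy.} The improvement is produced by iterating a dichotomy underpinned by the $99\%$ Structure Theorem~\ref{th:mainstructure}. Either the Fourier mass of $1_B$ is sufficiently spread out across scales that a sharpened version of the Konyagin bound (adapted to the oscillating coefficients $\hat\phi(k)$) yields a logarithmic-sized contribution to $\|f_B\|_1$; or the Fourier mass is concentrated on a few Bohr frequencies, forcing $B$ to be $99\%$-contained in a low-dimensional generalised arithmetic progression. In the second case, the freedom to replace $B$ by a new $F_4$-isomorphic copy of $A$ lets one ``unfold'' the progression into an ambient setting in which the previous Fourier concentration is broken, strictly reducing a complexity measure such as the Freiman dimension. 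Each such step contributes an additive constant to the achievable $\|f_B\|_1$, and iterating through $\sim\log\log|A|$ stages produces the required bound.

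\medskip

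\textbf{Main obstacle.} The delicate point is quantitative efficiency of the iteration. Theorem~\ref{th:mainstructure} loses polynomial factors in $KC$ at each application, so the sharpened $L^1$-estimate must be strong enough that every iteration contributes a genuine constant to $\|f_B\|_1$, rather than merely trading one form of arithmetic structure for another. Establishing the required $L^1$-estimate for trigonometric series supported on structured frequency sets, and verifying that the $F_4$-isomorphism freedom is flexible enough to realise the unfolding step at each stage without increasing the Freiman complexity, should be the technical heart of the argument.
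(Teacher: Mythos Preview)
Your reduction from the $L^\infty$ bound to an $L^1$ estimate is correct and matches the paper's Proposition~\ref{prop:basicestimate}~(i)--(ii). After that, however, the proposal is not a proof but a sketch with a significant gap, and the proposed mechanism is not the one the paper uses.

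The central problem is your ``structural dichotomy'' step. You invoke Theorem~\ref{th:mainstructure} as the engine of the argument, but in the paper this structure theorem is a \emph{consequence} of the machinery built to prove the main result (see Section~9), not an input to it; using it here is circular. Even setting that aside, your iteration scheme is not justified: you assert that each ``unfolding'' of a GAP into a new $F_4$-isomorphic copy contributes an additive constant to $\lVert f_B\rVert_1$ and strictly decreases a Freiman-type complexity, and that this can be repeated $\sim\log\log|A|$ times. None of these three claims is argued. In particular, passing to an $F_4$-isomorphic copy does not by itself alter $\lVert f_B\rVert_1$ (indeed $S(A)$ is an $F_4$-invariant), so the ``constant per step'' must come from somewhere concrete, and you have not identified where.

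The paper's route is quite different and does not proceed by iterating a structure theorem. It first shows that if $\lVert \hat 1_A\rVert_1$ is small then $\dim(A)$ is small (Theorem~\ref{th:smallL^1smalldim}), and uses this once to pass to a single dense $F_4$-model $B\subset[-T,T]$ with $T\leqslant e^{(\log N)^{O(1)}}$ (Theorem~\ref{th:densemodel}). The $\log\log N$ gain then comes from the distribution of $B$ in residue classes modulo $\prod_{p\leqslant Q_1}p^{\nu_p+1}$ for small primes $p\leqslant (\log N)^{1/2}$: either one such class is already large enough to force $\lVert F_B\rVert_1\gg\log\log N$ directly (Proposition~\ref{prop:distmodpstruct}), or one extracts a $\prec$-chain of $\gg(\log N)/\log\log N$ residue classes with geometrically increasing sizes (Lemma~\ref{lem:Astrongstructure}) and builds a \emph{non-Archimedean} McGehee--Pigno--Smith test function along this chain (Section~8). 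The key new idea, absent from your sketch, is that the test function is assembled using $p$-adic valuations of the frequencies rather than their Archimedean ordering; this is what allows one to bypass the obstruction Bourgain identified (the uniqueness of the sum-free interval $(1/3,2/3)$) and is where the $\log\log$ genuinely appears.
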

Before beginning the proof, we attempt to give a broad outline of our approach and indicate where it differs from previous work due to Bourgain \cite{bourgain}. Bourgain proved that $S(N)\geqslant (N+2)/3$, but perhaps the most interesting part of his paper is his progress for $(3,1)$-sum-free sets, which he defines to be sets containing no $x,y,z,w$ with $x+y+z=w$. In analogy to $S(N)$, the quantity $S_{(3,1)}(N)$ is then defined to be the largest number so that any set of $N$ positive integers contains a $(3,1)$-sum-free subset of size $S_{(3,1)}(N)$. Noting that the interval $(1/8,3/8)\subset \mathbf{T}$ is $(3,1)$-sum-free allows one to use Erd\H{o}s's argument and obtain $S_{(3,1)}(N)\geqslant N/4$. Bourgain proved the substantially better bound $S_{(3,1)}(N)\geqslant N/4+(\log N)^{1-o(1)}$. However, his argument rather crucially relies on the existence of another maximal $(3,1)$-sum-free subinterval of $\mathbf{T}$, namely $(5/8,7/8)=-(1/8,3/8)$, which allows him to combine the Fourier series of $1_{(1/8),(3/8)}$ and $1_{(5/8,7/8)}$ to get what is essentially a one-sided Fourier series (i.e. its Fourier spectrum consists of non-negative integers only). As Bourgain points out, such an approach cannot be applied to bound $S(N)$ since it is easy to see that $(1/3,2/3)$ is the unique sum-free interval in $\mathbf{T}$ with measure $1/3$. Recently, Jing and Wu \cite{jingwu1, jingwu2} extended Bourgain's method and showed that $S_{(k,\ell)}(N)\geqslant N/(k+\ell)+(\log N)^{1-o(1)}$ for various other pairs $(k,\ell)$, perhaps most interestingly $(2,4)$ and $(1,5)$, but their method still relies on the existence of asymmetric maximal $(k,\ell)$-sum-free subset of the torus. We also mention that Eberhard \cite{eberhard} has shown that $S_{(k,1)}(N)\leqslant N/(k+1)+o(N)$ and that Jing and Wu proved the analogous bound for all $(k,\ell)$.

\medskip

\textbf{Overview of the paper.}
In Section 4, we begin by recalling some of Bourgain's approach which considers the Fourier expansion $$F_A(x)=\sum_{a\in A}(\phi-1/3)(ax)=\sum_{a\in A}\sum_{n\geqslant 1}\frac {\chi(n)}{n}\cos 2\pi nax,$$ where $\chi$ is a character mod 3. Bourgain shows in particular that in order to establish Theorem \ref{th:main}, it suffices to prove that $\lVert F_A\rVert_1\gg \log \log N$. Bourgain also observed that one can `sift' out the contribution of $n>1$ and that a bound $\lVert F_A\rVert_1\gg C$ would follow if $\lVert \hat{1}_A\rVert_1\gg C\log N$.

\smallskip

The first step in our argument is to establish two inverse theorems describing structural properties of sets $A$ for which $\lVert \hat{1}_A\rVert_1\ll C\log N$ and $C$ is `small'. The main result in Section 5 shows that such sets $A$ have small additive dimension $\dim(A)$. In Section 6, we exploit such additive information about $A$ to find a `dense' Freiman-isomorphic copy $B$ of $A$ and we use this to obtain strong bounds on the Fourier coefficients and $L^2$ norm of (certain modifications of) $F_{B}$.  

\smallskip

Section 7 is concerned with studying the distribution of $A$ in residue classes modulo powers of `small' primes $p\leqslant (\log N)^{1/2}$. First, we use a combinatorial argument to show that a large part of $A$ must lie in a single such residue class when $\dim(A)$ is small. Secondly, we prove in Proposition \ref{prop:distmodpstruct} that either $\lVert F_A\rVert_1\gg \log \log N$ or else that the distribution of $A$ in these residue classes is highly structured. The final step in the proof of Theorem \ref{th:main} is accomplished in Section 8 and consists of exploiting this non-Archimedean structure in $A$ to construct an explicit test function $\Phi$ which witnesses that $\lVert F_A\rVert_1$ is large (i.e. we construct $\Phi$ s.t. $|\Phi|\leqslant 1$ and $\int_0^1 F_A(x)\Phi(x)\,dx\gg \log \log N$).

\smallskip

The proof of the structural result Theorem \ref{th:mainstructure} is given in Section 9 and proceeds by bootstrapping our application of the inverse results from Section 5. 

\medskip

\textbf{Acknowledgements.}
The author would like to thank Thomas Bloom, Ben Green and Mehtaab Sawhney for their detailed reading of the paper and for providing many useful suggestions and comments. The author also gratefully acknowledges financial support from the EPSRC.
\section{Notation and prerequisites}
We use the asymptotic notation $f=O(g)$ or $f\ll g$ if there is an absolute constant $C$ such that $|f(x)|\leqslant C g(x)$ for all $x$, and we write $f=o(g)$ if $f(x)/g(x)\to 0$ as $x\to\infty$. 

\smallskip

We use the notation $e(t)=e^{2\pi i t}$. Let $\mathbf{T}=\mathbf{R}/\mathbf{Z}$ be the one-dimensional torus. Throughout the paper, we shall write $c(x)$ for $\cos(2\pi x)$ so that $c(\cdot)$ is a well-defined (1-periodic) function on $\mathbf{T}$. For a suitably  integrable function $g:\mathbf{T}\to\mathbf{C}$ we denote, for $p\in[1,\infty)$, its $L^p$-norm by $$\lVert g\rVert_p\vcentcolon= \left(\int_0^1|g(t)|^p\,dt\right)^{1/p}.$$ Its Fourier transform is $\hat{g}:\mathbf{Z}\to\mathbf{C}$ which is defined by $\hat{g}(n)=\int_\mathbf{T}g(t)e(-nt)\,dt$. If $f:\mathbf{Z}\to\mathbf{C}$ is a function, we shall denote its Fourier transform $\hat{f}:\mathbf{T}\to\mathbf{C}$ by $\hat{f}(x)=\sum_{n\in\mathbf{Z}}f(n)e(nx)$. 

\smallskip

For two functions $g,h\in L^2(\mathbf{T})$ we define $\langle g,h\rangle =\int_\mathbf{T} g(x)\overline{h(x)}\,dx$ and we shall frequently use Parseval's theorem which states that $\langle g,h\rangle =\sum_{n\in\mathbf{Z}}\hat{g}(n)\overline{\hat{h}(n)}$. We also define their convolution $g*h(x) = \int_\mathbf{T}g(x-y)h(y)\,dy$ and note that $\widehat{g*h}(n) = \hat{g}(n)\hat{h}(n)$. 

\smallskip

The following seminal result is known as the ‘Littlewood $L^1$ conjecture', which was proved by McGehee, Pigno and Smith \cite{mcgeheepignosmith} and independently by Konyagin \cite{konyaginl}.
\begin{theorem}[Littlewood's $L^1$ conjecture]
Let $a_1,a_2,\dots,a_k$ be complex numbers and $n_1<n_2<\dots<n_k$ be integers. Then $$\left\lVert \sum_{j=1}^k a_je(n_jt)\right\rVert_1\gg \sum_{j=1}^k\frac{|a_j|}{j}.$$
\label{Litllconj}    
\end{theorem}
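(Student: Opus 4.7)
The plan is to prove Theorem \ref{Litllconj} via $L^1$-$L^\infty$ duality, which will reduce the desired lower bound to a multiplier construction problem. Setting $f(t)=\sum_{j=1}^k a_j e(n_j t)$, Parseval gives for any $\psi\in L^\infty(\mathbf{T})$
$$\int_{\mathbf{T}}f(t)\overline{\psi(t)}\,dt=\sum_{j=1}^k a_j\overline{\hat\psi(n_j)},$$
and consequently $\|f\|_1=\sup_{\|\psi\|_\infty\leqslant 1}\big|\sum_j a_j\overline{\hat\psi(n_j)}\big|$. It therefore suffices to exhibit, for some absolute constant $C$, a function $\psi$ with $\|\psi\|_\infty\leqslant C$ whose Fourier coefficients satisfy $\hat\psi(n_j)=(a_j/|a_j|)/j$ at the frequencies $n_j$ (and $0$ when $a_j=0$). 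Indeed, pairing $f$ against such a $\psi$ immediately yields $\|f\|_1\geqslant\frac{1}{C}\sum_j|a_j|/j$, which is the claimed bound. The phases of the $a_j$ are therefore absorbed into the construction, and the real problem is to build, for any assignment of unit complex numbers $\epsilon_j$ to the ordered frequencies $n_1<n_2<\cdots<n_k$, a function $\psi$ with $\|\psi\|_\infty=O(1)$ and $\hat\psi(n_j)=\epsilon_j/j$ for every $j$.

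For this multiplier construction, I would follow the strategy of McGehee, Pigno and Smith (with Konyagin's independent argument as an alternative). A natural first attempt is to partition the indices into dyadic blocks $B_\ell=\{j:2^{\ell-1}\leqslant j<2^\ell\}$ and set $\psi_\ell(t)=\frac{1}{2^\ell}\sum_{j\in B_\ell}\epsilon_j e(n_jt)$, so that $\|\psi_\ell\|_\infty\leqslant 1$ by the trivial triangle inequality and $\hat\psi_\ell(n_j)=\epsilon_j/2^\ell$ for $j\in B_\ell$, which matches the target $\epsilon_j/j$ up to a factor of $2$. Summing $\psi=\sum_\ell\psi_\ell$ over the $O(\log k)$ scales would produce the correct Fourier coefficients, but the naive triangle inequality only gives $\|\psi\|_\infty\ll\log k$, and the resulting lower bound $\|f\|_1\gg\frac{1}{\log k}\sum_j|a_j|/j$ is essentially the trivial Parseval-type estimate.

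The main obstacle, and the heart of the McGehee-Pigno-Smith argument, is upgrading this naive construction to one with $\|\psi\|_\infty=O(1)$ uniformly in $k$. MPS achieve this by building $\psi$ via a careful inductive/analytic procedure that combines the dyadic pieces (or closely related objects) through analytic and multiplicative compositions rather than a direct sum, exploiting cancellation between different scales to hold $\|\psi\|_\infty$ absolutely bounded while still forcing $|\hat\psi(n_j)|\gg 1/j$ with the prescribed phase. The hypothesis that $n_1<n_2<\cdots<n_k$ is a strictly increasing integer sequence is crucial here, since the construction must respect the arithmetic positioning of the frequencies. (Konyagin's alternative reaches essentially the same end via a probabilistic construction of bounded multipliers.) Once such a $\psi$ has been secured, the duality identity of the first paragraph immediately concludes the proof.
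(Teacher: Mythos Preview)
The paper does not prove this theorem; it is quoted as a known result with citations to McGehee--Pigno--Smith and Konyagin, and the paper explicitly notes that it will not use the statement. So there is no ``paper's proof'' to compare against directly, though the paper does develop MPS-style machinery in Lemma~\ref{lem:basicMPS} and Appendix~A for other purposes.

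Your outline is correct in spirit: the $L^1$--$L^\infty$ duality reduction, the dyadic blocking of indices, and the identification of the main obstacle are all accurate. But what you have written is a plan, not a proof. The crucial step---the actual construction that combines the dyadic pieces into a uniformly bounded function---is asserted to exist but not carried out. Phrases like ``a careful inductive/analytic procedure'' and ``analytic and multiplicative compositions rather than a direct sum'' gesture at the mechanism without providing it. The concrete content of the MPS argument is: one forms, for each block $B_\ell$, a one-sided analytic completion $h_\ell$ of $|\hat g_\ell|$ (so that $\operatorname{Re}h_\ell=|\hat g_\ell|$ and $\hat h_\ell$ is supported on $\mathbf Z_{\leqslant 0}$), sets $Q_\ell=e^{-h_\ell}$, builds $\Phi_{j+1}=\hat g_{j+1}+Q_{j+1}\Phi_j$ recursively, and proves $\|\Phi_j\|_\infty\leqslant 10$ by induction using $y+10e^{-y}\leqslant 10$ for $y\in[0,1]$. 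The one-sidedness of $\hat Q_\ell$ together with the ordering $n_1<n_2<\cdots$ is what controls the cross terms when one pairs $f$ against $\Phi_J$. All of this is missing from your proposal; the paper's Appendix~A contains exactly this argument (in a slightly perturbed setting) and could be adapted directly.

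Two smaller corrections. First, MPS do not construct a $\psi$ with $\hat\psi(n_j)$ \emph{exactly} equal to $\epsilon_j/j$; they build a bounded $\Phi$ and bound $\langle f,\Phi\rangle$ from below by separating main terms from error terms---aiming for exact interpolation of Fourier coefficients is an unnecessarily hard target. Second, Konyagin's proof is not probabilistic.
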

\begin{cor}
    Let $B\subset\mathbf{Z}$ be finite, then $\lVert \hat{1}_B\rVert_1\gg \log |B|$.
\end{cor}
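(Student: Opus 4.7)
The plan is to apply Theorem \ref{Litllconj} (Littlewood's $L^1$ conjecture) directly, since the corollary is the classical special case in which all coefficients equal $1$. Write $k = |B|$ and enumerate $B = \{n_1 < n_2 < \cdots < n_k\}$ in increasing order. By the convention for $\hat{f}$ when $f : \mathbf{Z} \to \mathbf{C}$ given in the Notation section, we have
$$\hat{1}_B(x) = \sum_{n \in \mathbf{Z}} 1_B(n) e(nx) = \sum_{j=1}^k e(n_j x),$$
so $\hat{1}_B$ is precisely a trigonometric polynomial of the form to which Theorem \ref{Litllconj} applies.

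Setting $a_j = 1$ for each $j \in [k]$, Theorem \ref{Litllconj} yields
$$\lVert \hat{1}_B\rVert_1 = \left\lVert \sum_{j=1}^k e(n_j t)\right\rVert_1 \gg \sum_{j=1}^k \frac{1}{j} \geqslant \log k = \log |B|,$$
using the standard lower bound on the harmonic sum. This is the desired inequality.

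There is essentially no obstacle: the only substantive content is the Littlewood $L^1$ conjecture itself (Theorem \ref{Litllconj}), which is quoted as a black box from \cite{mcgeheepignosmith, konyaginl}. The only minor thing to verify is the notational matching between the paper's definition of $\hat{1}_B$ as a function on $\mathbf{T}$ and the trigonometric polynomial appearing in Theorem \ref{Litllconj}, which is immediate.
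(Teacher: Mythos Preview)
Your proof is correct and is exactly the intended derivation: the paper states this corollary immediately after Theorem~\ref{Litllconj} without proof, treating it as the obvious specialisation to $a_j=1$, which is precisely what you have written out.
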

We shall not, in fact, use either of these results in this paper. Rather, we will employ various non-trivial modifications of some ideas appearing in the McGehee-Pigno-Smith proof of the Littlewood $L^1$ conjecture. Their proof proceeds, like much of the earlier progress on the $L^1$ conjecture, by constructing a function $\Phi$ such that $|\Phi|\ll 1$ and $\langle \hat{1}_B,\Phi\rangle$ is `large'. The most basic part of their construction of such a test function is something that we will use repeatedly, so we state the following general lemma about constructing these. In fact, the construction in the following lemma already differs from that of McGehee-Pigno-Smith; the reader may notice that we do not require the one-sidedness of the Fourier spectrum (a condition that is crucial for their original construction).
\begin{lemma}[M-P-S basic construction of test functions]\label{lem:basicMPS}
    Let $f:\mathbf{Z}\to\mathbf{C}$ be a function with  finite support $\supp(f)$. Let $X_1,X_2,\dots,X_J\subset\mathbf{Z}$ be finite and define the functions
    \begin{align*}
        g_i:\mathbf{Z}\to\mathbf{C}:g_i(n)=\begin{cases}
            |X_i|^{-1}\frac{f(n)}{|f(n)|} & \text{if $n\in X_i\cap \supp f$,}\\
            0 & \text{otherwise}.
        \end{cases}
    \end{align*}
We further define $Q_i(x)=e^{-|\hat{g}_i(x)|}$ and
\begin{equation*}
    \Phi_j=\hat{g}_j+\hat{g}_{j-1}Q_j+\hat{g}_{j-2}Q_{j-1}Q_{j}\dots+\hat{g}_1Q_2\dots Q_j,
\end{equation*}
and note that these are functions defined on $\mathbf{T}$.
Then the $g_i$ and $\Phi_j$ satisfy the following properties
\begin{align}
\lVert\Phi_j\rVert_\infty&\leqslant 10,\label{basicMPS}\\
    \lVert \hat{g}_i\rVert_\infty&\leqslant 1,\nonumber\\
    \lVert \hat{g}_i\rVert_2&\leqslant |X_i|^{-1/2},\nonumber\\
    \langle \hat{f},\hat{g}_i\rangle &=|X_i|^{-1}\sum_{n\in X_i\cap\supp f}|f(n)|\nonumber.
\end{align}
Moreover, the functions $Q_i$ satisfy the bounds $|Q_i|\leqslant 1$ and $|1-Q_i|\leqslant |\hat{g}_i|$.

\end{lemma}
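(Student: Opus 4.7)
The individual bounds on $\hat g_i$ and $Q_i$ are essentially formal. From the definition
$$\hat g_i(x) = |X_i|^{-1}\sum_{n\in X_i\cap\supp f}\frac{f(n)}{|f(n)|}\,e(nx),$$
the triangle inequality gives $\|\hat g_i\|_\infty\le 1$, Parseval (on $\mathbf{Z}$) gives $\|\hat g_i\|_2^2 = \sum_n |g_i(n)|^2 \le |X_i\cap\supp f|\cdot|X_i|^{-2} \le |X_i|^{-1}$, and $\langle \hat f, \hat g_i\rangle$ evaluates directly by Parseval since $f(n)\overline{g_i(n)} = |X_i|^{-1}|f(n)|$ on $X_i\cap\supp f$ and vanishes elsewhere. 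The bounds $|Q_i|\le 1$ and $|1-Q_i|\le|\hat g_i|$ are immediate from $|\hat g_i|\ge 0$ together with the elementary inequality $1-e^{-t}\le t$ for $t\ge 0$.

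The substantive part of the lemma is the uniform bound $\|\Phi_j\|_\infty\le 10$. My plan is to exploit the telescoping identity
$$\Phi_j = \hat g_j + Q_j\,\Phi_{j-1},$$
which follows from the definition by factoring $Q_j$ out of the last $j-1$ summands in $\Phi_j$. I would then proceed by induction on $j$, with base case $|\Phi_1| = |\hat g_1|\le 1$. For the inductive step, using the recursion, the triangle inequality, and $Q_j = e^{-|\hat g_j|}$,
$$|\Phi_j(x)|\le |\hat g_j(x)| + 10\,e^{-|\hat g_j(x)|}.$$
Setting $t = |\hat g_j(x)|\in[0,1]$ reduces the task to the scalar inequality $t + 10\,e^{-t}\le 10$, or equivalently $e^{-t}\le 1 - t/10$ on $[0,1]$.

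This last inequality is routine: the function $h(t) = 1 - t/10 - e^{-t}$ satisfies $h(0)=0$ and $h'(t) = e^{-t} - 1/10 > 0$ on $[0,\ln 10]\supset [0,1]$, so $h\ge 0$ on $[0,1]$. I do not anticipate any real obstacle in this lemma; the one conceptual point worth highlighting is the ``damping'' role of the factors $Q_l = e^{-|\hat g_l|}$, which in the full expansion of $\Phi_j$ multiplies each summand $\hat g_k$ by $\prod_{l=k+1}^j Q_l$ and thus suppresses its contribution wherever any later $|\hat g_l|$ is large. The constant $10$ is far from sharp and can be replaced by any absolute constant $M$ large enough that $e^{-t}\le 1 - t/M$ holds on $[0,1]$.
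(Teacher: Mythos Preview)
Your proof is correct and follows essentially the same approach as the paper: the bounds on $\hat g_i$, $Q_i$, and the inner product are handled identically, and the bound $\|\Phi_j\|_\infty\le 10$ is obtained by the same induction via the recursion $\Phi_j=\hat g_j+Q_j\Phi_{j-1}$ together with the scalar inequality $t+10e^{-t}\le 10$ on $[0,1]$.
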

\begin{proof}
    That $\lVert \hat{g}_i\rVert_\infty\leqslant 1$ follows immediately from the fact that $|g_i(n)|=|X_i|^{-1}$ if $n\in X_i\cap \supp f$ and $|g_i(n)|=0$ otherwise, and Parseval shows that $\lVert \hat{g}_i\rVert_2\leqslant |X_i|^{-1/2}$. Since $\Phi_1=\hat{g}_1$, the inequality $\lVert \Phi_1\rVert_\infty\leqslant 10$ holds. Assuming now that $\lVert \Phi_j\rVert_\infty\leqslant 10$, then one can observe that $\Phi_{j+1}=\hat{g}_{j+1}+\Phi_jQ_{j+1}$ so that
\begin{align*}
    |\Phi_{j+1}(x)|&\leqslant|\hat{g}_{j+1}(x)|+10e^{-|\hat{g}_{j+1}(x)|}\\
    &\leqslant 10,
\end{align*}
where the final line follows from the basic fact that $y+10e^{-y}\leqslant 10$ whenever $y\in[0,1]$. Parseval's theorem shows that
$$\langle \hat{f},\hat{g}_i\rangle =\sum_{n\in\mathbf{Z}}f(n)\overline{g_i}(n)=|X_i|^{-1}\sum_{n\in X_i\cap \supp f}|f(n)|.$$
Finally, it is trivial that $Q_i=e^{-|\hat{g}_i|}$ is 1-bounded, and the inequality $|1-Q_i|\leqslant |\hat{g}_i|$ follows from the fact that $|e^{-x}-1|\leqslant x$ for $x\geqslant0$.
\end{proof}

We shall also make use of the following important inequality of Rudin \cite{rudin}. To state Rudin's theorem, we need to introduce the notion of dissociativity.
\begin{definition}\label{def:addidimensio}
\normalfont Let $G$ be an Abelian group.  
\begin{itemize}
    \item A set $D \subseteq G$ is \emph{dissociated} if whenever $$\sum_{d\in D}\varepsilon_dd=0$$
for some $\varepsilon_d\in\{-1,0,1\}$, then all $\varepsilon_d=0$. Equivalently, $D$ is dissociated if the set of subset sums $\left\{\sum_{d\in S}d:S\subset D\right\}$ consists of $2^{|D|}$ distinct elements. 
\item The \emph{additive dimension}, which we denote by $\dim (A)$, is the size of the largest dissociated subset of $A$.
\end{itemize}

\end{definition}
\begin{theorem}[Rudin's inequality]\label{th:rudin}
    Let $D\subset \mathbf{Z}$ be a dissociated set and let $f:\mathbf{Z}\to\mathbf{C}$ have $\supp f\subset D$. Then for any $p\in[2,\infty)$ the following bound holds:
    \begin{equation*}
        \lVert \hat{f}\rVert_p\leqslant 10\sqrt{p}\lVert \hat{f}\rVert_2.
    \end{equation*}
\end{theorem}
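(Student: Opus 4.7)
The plan is to prove Rudin's inequality by reducing to even integer exponents $p=2k$ and bounding $\|\hat f\|_{2k}^{2k}$ via direct Fourier expansion, using dissociativity to enforce a near-diagonal structure on the resulting multi-sum.

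Since $\mathbf{T}$ has total mass one, $\|\cdot\|_p$ is non-decreasing in $p$, so it suffices to prove $\|\hat f\|_{2k}\leqslant C\sqrt{k}\,\|\hat f\|_2$ for every positive integer $k$; the general case $p\geqslant 2$ follows by choosing $2k$ to be the smallest even integer at least $p$ at the cost of a harmless factor of $\sqrt{2}$. Applying Parseval to $\hat f^k$, whose Fourier coefficients are the $k$-fold convolution of $f$ supported on the $k$-fold sumset of $D$, one obtains
\begin{equation*}
\|\hat f\|_{2k}^{2k} \;=\; \int_0^1|\hat f(x)|^{2k}\,dx \;=\; \sum_{\substack{(d_i),(e_j)\in D^k\\ d_1+\cdots+d_k\,=\,e_1+\cdots+e_k}} f(d_1)\cdots f(d_k)\,\overline{f(e_1)}\cdots\overline{f(e_k)}.
\end{equation*}
The aim is to show the right-hand side is at most $(Ck)^k\,\|\hat f\|_2^{2k}$, after which extracting the $2k$-th root yields the desired inequality with the constant $10$ absorbing the absolute factors.

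The central combinatorial step exploits dissociativity on the constraint $\sum d_i=\sum e_j$. Rewriting it as $\sum_{d\in D}\mu(d)\,d=0$ with $\mu(d)=|\{i:d_i=d\}|-|\{j:e_j=d\}|\in\mathbf{Z}$, I would partition matched pairs by the multiplicity profile of the combined multiset $(d_i)\sqcup(e_j)$. In the \emph{non-repeating} case, where every $d\in D$ appears at most once in both tuples, $\mu$ takes values in $\{-1,0,1\}$ and dissociativity forces $\mu\equiv 0$; equivalently, $(e_j)$ is a permutation of $(d_i)$. Summing over all $k!$ permutations and all choices of $k$ distinct elements of $D$ gives the clean diagonal bound
\begin{equation*}
k!\sum_{\{d_1,\ldots,d_k\}\subset D}\prod_i |f(d_i)|^2 \;\leqslant\; k!\,\|f\|_{\ell^2(D)}^{2k} \;=\; k!\,\|\hat f\|_2^{2k},
\end{equation*}
which by Stirling is of the desired form $(Ck)^k\,\|\hat f\|_2^{2k}$. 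For profiles with repetitions, the idea is that each repeated element reduces the number of effectively free indices, so an inductive argument that peels off repeated blocks and applies dissociativity to the ``distinct residual'' yields a Stirling-type estimate for each profile; summing over the $k^{O(k)}$ possible profiles produces the total bound.

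The main obstacle is the profile-with-repetitions case: dissociativity in its raw form only rules out $\{-1,0,1\}$-valued relations, whereas repetitions produce $|\mu(d)|\geqslant 2$ and the hypothesis does not apply directly. The resolution is a double-counting argument: one decomposes each matched pair according to the set of $d$'s appearing with multiplicity $\geqslant 2$, then applies dissociativity to the residual $\{-1,0,1\}$-combination on the complementary distinct part, while the combinatorial overhead of placing the repetitions is bounded by the appropriate multinomial coefficients. The delicate point is ensuring that the sum over all multiplicity profiles does not blow up the $(Ck)^k$ bound, which follows from a careful accounting against Stirling.
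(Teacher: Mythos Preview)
The paper does not give its own proof of Rudin's inequality; it is stated in the prerequisites section as a classical result with a citation to Rudin. The standard proof, and Rudin's original, proceeds via Riesz products: dissociativity of $D$ ensures that $\prod_{d\in D}\bigl(1+\cos(2\pi dx+\theta_d)\bigr)$ is a nonnegative function of integral $1$, and this is used to bound the exponential moment $\int_{\mathbf T}\exp(\lambda\operatorname{Re}\hat f)$, from which the $L^p$ bound follows by optimising in $\lambda$. Your direct moment-expansion approach is a genuinely different route.

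Your non-repeating case is handled correctly. The gap is the repetitions case, which you rightly flag as the main obstacle but do not actually carry out. The proposed fix --- peel off the elements of multiplicity $\geqslant 2$ and ``apply dissociativity to the residual $\{-1,0,1\}$-combination on the complementary distinct part'' --- does not work as described. Take $D=\{1,3,5\}$, which is dissociated, and $k=2$: the matched pair $(1,5)$ versus $(3,3)$ gives $\mu(1)=\mu(5)=1$, $\mu(3)=-2$. After peeling off the repeated element $3$, the residual constraint on the distinct part is $1\cdot 1+1\cdot 5=6$, not a vanishing $\{-1,0,1\}$-combination, so dissociativity says nothing directly. More to the point, dissociativity does \emph{not} imply that $\epsilon\mapsto\sum_d\epsilon_d d$ is injective on $\{-1,0,1\}^D$: in the same example, $(1,1,-1)$ and $(-1,0,0)$ both map to $-1$. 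So even after fixing the high-multiplicity data, the residual need not be uniquely determined, and your ``careful accounting against Stirling'' has real content that is absent from the sketch. A combinatorial proof along these lines can be completed, but it requires substantially more than what you have written; the Riesz-product argument sidesteps the issue entirely.
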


\section{The Fourier series of $\sum_{a\in A}\left(\phi-1/3\right)(ax)$}
The purpose of this section is to recall some of the Fourier-theoretic setup of Bourgain's paper \cite{bourgain}. We shall assume throughout that $A\subset\mathbf{Z}\setminus\{0\}$ has size $N$. Recall the important basic estimate
\begin{equation}\label{basicestimate}
    S(A)\geqslant \frac{N}{3}+\max_{x}\sum_{a\in A}\left(\phi-\frac{1}{3}\right)(ax),
\end{equation}
where $\phi$ is the characteristic function of $(1/3,2/3)\subset\mathbf{T}$. The function $\phi-1/3:\mathbf{T}\to \mathbf{R}$ has the following Fourier expansion
\begin{equation*}
    \phi(x)-1/3=\frac{-\sqrt{3}}{\pi}\sum_{n=1}^\infty\frac{\chi(n)}{n}c ( nx)
\end{equation*}
where $c(x)=\cos(2\pi x)$ and $\chi$ is the multiplicative character given by
\begin{equation*}
    \chi(n)=\begin{cases}
        0 & \text{if}\ n\equiv 0 \md 3 \\ 
        1 & \text{if}\ n\equiv 1 \md 3 \\
        -1 & \text{if}\ n\equiv 2 \md 3.
    \end{cases}
\end{equation*}
Let $\mu$ denote the M\"obius function, and recall the following fundamental property
\begin{equation*}
    \sum_{k|n}\mu(k)= 1_{n=1}.
\end{equation*} 
For a parameter $Q$, we say that an integer $n$ is $Q$-rough if all its prime factors are greater than $Q$, and we denote the set of $Q$-rough numbers by
\begin{equation*}
    \mathcal{R}_Q=\{n\in \mathbf{N}: (n,p)=1 \text{ for all primes }p\leqslant Q\}.
\end{equation*}
The function appearing in the right hand side of \eqref{basicestimate} is fundamental to our approach, and it will be convenient to introduce notation for the following rescaled version
\begin{align*}
    F(x)=F_A(x)&\vcentcolon=\frac{-\pi}{\sqrt{3}}\sum_{a\in A}(\phi-1/3)(ax)\\
    &=\sum_{a\in A}\sum_{n\geqslant 1}\frac{\chi(n)}{n}c (nax).
\end{align*}
We record some of its important basic properties. For a set $A\subset\mathbf{Z}$, we will write $c_{A}(x)=\sum_{a\in A}c(ax)$ for the cosine polynomial with frequencies in $A$.
\begin{proposition}[Bourgain \cite{bourgain}]\label{prop:basicestimate}
    Let $A\subset\mathbf{Z}\setminus\{0\}$. Then the following holds: 
    \begin{itemize}
        \item [(i)] there exists some absolute constant $c>0$ such that $$S(A)\geqslant \frac{|A|}{3}+c \max_{x\in \mathbf{T}}(-F(x));$$
        \item [(ii)] we have the one-sided estimate $\max_x(-F(x))\geqslant \frac{1}{2}\lVert F\rVert_1$;
        \item [(iii)] for any parameter $Q$ we have \begin{align*}\sum_{k|\prod_{p\leqslant Q}p}\frac{\mu(k)\chi(k)}{k}F(kx)&=\sum_{a\in A}\sum_{n\in\mathcal{R}_Q}\frac{\chi(n)}{n}c (nax)\\
        &= \underbrace{\sum_{a\in A}c( ax)}_{c_A(x)}+\underbrace{\sum_{\substack{1<n\in \mathcal{R}_Q\\ a\in A}}\frac{\chi(n)}{n} c (nax)}_{R_Q(x)},
        \end{align*}
        where the variable $p$ runs over primes only;
        \item [(iv)] for any $Q$ we have $$\lVert F\rVert_1\gg \lVert c_A+R_Q\rVert_1/\log Q.$$
    \end{itemize}
    
\end{proposition}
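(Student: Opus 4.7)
The proposition has four parts, all of which are direct consequences of standard Fourier manipulations together with the explicit expansion of $\phi - 1/3$. I sketch each in turn.

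For (i), simply substitute the definition $F = -\frac{\pi}{\sqrt{3}}\sum_{a\in A}(\phi-1/3)(a\cdot)$ into the basic estimate \eqref{basicestimate}; this immediately yields the stated bound with $c = \sqrt{3}/\pi$. For (ii), the key observation is that $F$ has mean zero on $\mathbf{T}$: every term $c(nax)$ with $n\geqslant 1$ and $a\neq 0$ integrates to $0$. Writing $F = F^+ - F^-$ with $F^\pm\geqslant 0$, we then have $\int F^+ = \int F^- = \frac{1}{2}\lVert F\rVert_1$, and since $\max_x(-F(x))\geqslant \int F^-$ trivially, we are done.

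Part (iii) is a Möbius-inversion computation. Starting from the definition of $F$, I would expand
\begin{equation*}
\sum_{k\mid P}\frac{\mu(k)\chi(k)}{k}F(kx)=\sum_{a\in A}\sum_{n\geqslant 1}\sum_{k\mid P}\frac{\mu(k)\chi(k)\chi(n)}{kn}c(nkax),
\end{equation*}
where $P=\prod_{p\leqslant Q}p$. Substituting $m=nk$ and using that the character $\chi$ mod $3$ is completely multiplicative, so that $\chi(k)\chi(n)=\chi(m)$, this rearranges to
\begin{equation*}
\sum_{a\in A}\sum_{m\geqslant 1}\frac{\chi(m)}{m}c(max)\sum_{k\mid \gcd(m,P)}\mu(k).
\end{equation*}
The inner sum is the standard indicator $\mathbf{1}[\gcd(m,P)=1]$, i.e.\ $\mathbf{1}[m\in\mathcal{R}_Q]$, and isolating the $m=1$ term yields exactly $c_A(x)+R_Q(x)$. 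The only point to verify carefully is the absolute convergence needed to interchange the sums, which follows from the fact that $\phi - 1/3$ is bounded and its Fourier series converges pointwise almost everywhere, or alternatively by first truncating the inner sum at $n\leqslant M$ and passing to the limit.

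For (iv) I would combine (iii) with the triangle inequality, noting that for each positive integer $k$ the map $x\mapsto kx$ preserves $L^1(\mathbf{T})$, so $\lVert F(k\cdot)\rVert_1 = \lVert F\rVert_1$. This gives
\begin{equation*}
\lVert c_A+R_Q\rVert_1\leqslant \sum_{k\mid P}\frac{1}{k}\lVert F\rVert_1 = \lVert F\rVert_1\prod_{p\leqslant Q}\left(1+\frac{1}{p}\right)\ll \lVert F\rVert_1\log Q,
\end{equation*}
where the last step is Mertens' theorem. Rearranging gives the claim. None of the four parts presents a serious obstacle; the only thing to be slightly careful about is the convergence justification in (iii) and the use of complete multiplicativity of $\chi$.
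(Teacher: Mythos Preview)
Your proof is correct and essentially identical to the paper's own proof: part (i) with $c=\sqrt3/\pi$, part (ii) via $\int F=0$ and the decomposition into positive and negative parts, part (iii) via the substitution $m=nk$ and the M\"obius identity $\sum_{k\mid\gcd(m,P)}\mu(k)=1_{\gcd(m,P)=1}$, and part (iv) via the triangle inequality, $\lVert F(k\cdot)\rVert_1=\lVert F\rVert_1$, and Mertens' estimate. The only cosmetic difference is that the paper justifies $\int F=0$ by $\int(\phi-1/3)=0$ rather than term-by-term, and does not comment on convergence in (iii).
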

\begin{proof}
    Item (i) follows immediately from \eqref{basicestimate} with $c=\sqrt{3}/\pi$. For item (ii), we note that $\int_\mathbf{T}(\phi(x)-1/3)\,dx=0$ so that $\int F=0$ and hence $$\int_0^1 |F(x)|\,dx=\int (|F(x)|-F(x))\,dx=\int 2\max(-F(x),0)\,dx\leqslant 2\max_x (-F(x)).$$
    To prove the (iii), we use the multiplicative nature of $\chi$ to calculate
    \begin{align*}
        \sum_{k|\prod_{p\leqslant Q}p}\frac{\mu(k)\chi(k)}{k}\frac{-\pi}{\sqrt{3}}(\phi-1/3)(kx)&=\sum_{k|\prod_{p\leqslant Q}p}\frac{\mu(k)\chi(k)}{k}\sum_{n\geqslant 1}\frac{\chi(n)}{n}c( nkx)\\
        &=\sum_{m\geqslant 1}\frac{\chi(m)}{m}c (mx)\sum_{k|\gcd(m,\prod_{p\leqslant Q}p)}\mu(k)\\
        &= \sum_{m\geqslant 1}\frac{\chi(m)}{m}c (mx) 1_{\{m \text{ is $Q$-rough}\}}\\
        &= c (x) +\sum_{1<m\text{ is $Q$-rough}}\frac{\chi(m)}{m}c( mx).
    \end{align*}
    Replacing $x$ by $ax$ and summing over $a\in A$ yields (iii). For the final item, we use the bound
    \begin{align*}
        \lVert c_A+R_Q\rVert_1&=\left\lVert \sum_{k|\prod_{p\leqslant Q}p}\frac{\mu(k)\chi(k)}{k}F(kx)\right\rVert_1\\
        &\leqslant \sum_{k|\prod_{p\leqslant Q}p}\frac{1}{k}\lVert F\rVert_1\\
        &=\lVert F\rVert_1\prod_{p\leqslant Q}(1+1/p)\\
        &\ll \lVert F\rVert_1(\log Q),
    \end{align*}
    where we used the bound $1+p^{-1}\leqslant e^{p^{-1}}$ and Mertens' estimate \cite[Theorem 2.7]{montgomeryvaughan}: $\sum_{p\leqslant Q}\frac{1}{p}\leqslant \log\log Q+O(1)$.
\end{proof}
Bourgain noted that an important consequence is the following bound for $S(A)$.
\begin{proposition}\label{prop:sumfreeL^1}
    Let $A\subset\mathbf{Z}\setminus\{0\}$ be a set of size $N$. Then
    \begin{equation}
        S(A)\geqslant \frac{N}{3}+c\frac{\lVert c_A\rVert_1}{\log N}
    \end{equation}
    where $c>0$ is some absolute constant. In particular, if $S(A)\leqslant N/3+C$, then $\lVert \hat{1}_A+\hat{1}_{-A}\rVert_1\ll C\log N$.
\end{proposition}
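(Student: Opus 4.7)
The plan is to combine items (i), (ii), and (iv) of Proposition \ref{prop:basicestimate} with a Fourier-projection argument. Items (i) and (ii) together yield $S(A) \geqslant N/3 + c_0 \lVert F\rVert_1$ for some absolute $c_0 > 0$, so it suffices to prove $\lVert F\rVert_1 \gg \lVert c_A\rVert_1/\log N$. By (iv), $\lVert F\rVert_1 \gg \lVert c_A + R_Q\rVert_1/\log Q$ for any $Q\geqslant 2$, and the proof therefore reduces to choosing $Q$ with $\log Q = O(\log N)$ for which $\lVert c_A + R_Q\rVert_1 \gg \lVert c_A\rVert_1$.

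The key Fourier-analytic observation is a separation of supports: $c_A$ has its frequency support contained in $\pm A$, whereas any $n > 1$ with $n \in \mathcal{R}_Q$ satisfies $n > Q$, so $R_Q$ is supported on frequencies of magnitude $> Q\cdot \min_{a\in A}|a|$. After normalising so that $\min_a|a|\geqslant 1$ (by dividing out $\gcd(A)$) and passing to an $F_2$-isomorphic copy of $A$ inside $[-N^{O(1)}, N^{O(1)}]$ via a standard Freiman embedding (this preserves $S(A)$, and with a mild integer-dilation argument can be arranged to preserve $\lVert c_A\rVert_1$ up to absolute constants), I would set $M = \max_{a\in A}|a|$ and $Q = 2M+1$. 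Then $\log Q = O(\log N)$ and the Fourier supports of $c_A$ and $R_Q$ are disjoint.

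To extract the $L^1$ estimate I would employ a de la Vallée Poussin-type kernel $V_M$ satisfying $\widehat{V_M}(m) = 1$ for $|m|\leqslant M$, $\widehat{V_M}(m) = 0$ for $|m|\geqslant 2M$, and $\lVert V_M\rVert_1 = O(1)$ (constructed for example as a normalised difference of two Fej\'er kernels). Since $V_M$ acts as the identity on the Fourier support of $c_A$ and annihilates that of $R_Q$, one has $V_M*(c_A + R_Q) = c_A$, and Young's inequality yields
$$\lVert c_A\rVert_1 = \lVert V_M*(c_A + R_Q)\rVert_1 \leqslant \lVert V_M\rVert_1\cdot \lVert c_A + R_Q\rVert_1 \ll \lVert c_A + R_Q\rVert_1.$$
Combining with (iv) gives the desired $\lVert F\rVert_1 \gg \lVert c_A\rVert_1/\log N$, which together with (i) and (ii) proves the main inequality; the ``in particular'' statement is then immediate from the identity $\lVert c_A\rVert_1 = \tfrac12 \lVert\hat 1_A + \hat 1_{-A}\rVert_1$.

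The principal obstacle I anticipate is making the Freiman reduction step fully rigorous while preserving $\lVert c_A\rVert_1$, since this quantity is not an invariant of arbitrary Freiman isomorphisms; this should be handled by combining a Ruzsa-type mod-$p$ embedding with integer dilations (under which $\lVert c_A\rVert_1$ is exactly invariant), or alternatively by first proving the unconditional bound $\lVert F\rVert_1 \gg \lVert c_A\rVert_1/\log\max_a|a|$ and deferring the Freiman reduction to the point where the proposition is applied in later sections.
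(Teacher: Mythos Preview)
Your overall strategy (combine (i), (ii), (iv) and show $\lVert c_A+R_Q\rVert_1\gg\lVert c_A\rVert_1$ for some $Q$ with $\log Q=O(\log N)$) matches the paper, but your execution contains a genuine error, and the paper's argument is far simpler.

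The error is in the Freiman reduction. There is no ``standard Freiman embedding'' placing an arbitrary set of $N$ integers into $[-N^{O(1)},N^{O(1)}]$: a dissociated set of size $N$, for instance, cannot be $F_2$-embedded into any interval shorter than $2^N$. The standard Ruzsa-type embedding only gives a copy in $[-e^{O(N)},e^{O(N)}]$, which would make $\log Q$ of order $N$ rather than $\log N$ and destroy the bound. (The paper does eventually obtain a polynomial-sized model, but only \emph{after} establishing $\dim(A)\ll(\log N)^{O(1)}$ in Section~5 --- and that deduction uses the very proposition you are trying to prove.) You correctly flag this step as the ``principal obstacle'', but it is not a technicality to be patched; it is circular as stated.

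The paper bypasses all of this. No model, no kernel, no support separation: one simply takes $Q=100N^2$ (so $\log Q=O(\log N)$ automatically, independent of $\max_a|a|$) and bounds $R_Q$ in $L^2$ by the triangle inequality and Parseval,
\[
\lVert R_Q\rVert_1\leqslant\lVert R_Q\rVert_2\leqslant|A|\Bigl(\sum_{n>Q}n^{-2}\Bigr)^{1/2}\leqslant \frac{|A|}{Q^{1/2}}=\frac{1}{10}.
\]
Since $\lVert c_A\rVert_1\geqslant 1/2$ trivially, one gets $\lVert c_A+R_Q\rVert_1\geqslant\lVert c_A\rVert_1-1/10\gg\lVert c_A\rVert_1$. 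The key point you missed is that the tail $R_Q$ is small in $L^2$ for reasons having nothing to do with the magnitudes of the elements of $A$; the smallness comes entirely from the decay of the coefficients $\chi(n)/n$.
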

\begin{proof}
   By combining (i),(ii) and (iv) in Proposition \ref{prop:basicestimate} we get $S(A)-N/3\gg (\log Q)^{-1}\lVert c_A+R_Q \rVert_1$, so it suffices to show that $\lVert c_A+R_Q\rVert_1\gg \lVert c_A\rVert_1$ for $Q=100N^2$. Observe that by monotonicity of $L^p$ norms and Parseval's identity $$\lVert R_Q\rVert_1\leqslant \lVert R_Q\rVert_2\leqslant |A|\left\lVert \sum_{1<n\in \mathcal{R}_Q}\frac{\chi(n)}{n}c( nx)\right\rVert_2\leqslant|A|\left(\sum_{n>Q} n^{-2}\right)^{1/2}\leqslant \frac{|A|}{Q^{1/2}}=1/10.$$
   Hence, $\lVert c_A+R_Q\rVert_1\geqslant \lVert c_A\rVert_1-1/10$ and from the trivial lower bound $\lVert c_A\rVert_1\geqslant 1/2$ (which can for example be proved by noting that $\int_0^1 |c_A(x)|\geqslant \int_0^1 c_A(x)e(-ax)=1/2$ for $a\in A$) we see that $\lVert c_A+R_Q\lVert_1\gg \lVert c_A\rVert_1$.
\end{proof}
Propositions \ref{prop:basicestimate} and \ref{prop:sumfreeL^1} can be found in Bourgain's paper, but this is the limit of what his approach yields with regards to Problem \ref{conj:sumfree}. Now that we have introduced the function $F_A$ and its useful relation to $S(A)$ which forms the starting point for our approach, we state the following more detailed theorem.
\begin{theorem}\label{th:mainL^1}
    Let $A\subset\mathbf{Z}\setminus\{0\}$ have size $N$. Then there exists a set $B\subset\mathbf{Z}\setminus\{0\}$ which is $F_4$-isomorphic to $A$ and satisfies $\lVert F_B\rVert_1\gg \log \log N$ where $F_B(x)=\sum_{b\in B}\sum_{n\geqslant 1}\frac{\chi(n)}{n}\cos 2\pi nbx$.
\end{theorem}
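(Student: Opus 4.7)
The plan is to prove Theorem \ref{th:mainL^1} via a sequence of dichotomies in the spirit of Bourgain, but pushing much further by extracting additive and non-Archimedean structure from $A$ whenever the direct Fourier-analytic approach fails. Throughout, set $C = c \log \log N$ for a small absolute constant $c>0$; the goal is to produce an $F_4$-isomorphic copy $B$ of $A$ with $\|F_B\|_1 \gg C$.

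The starting dichotomy uses Bourgain's sifting argument from Proposition \ref{prop:sumfreeL^1}. If $\|c_A\|_1 \gtrsim C \log N$, then combining items (iii)--(iv) of Proposition \ref{prop:basicestimate} with a polynomial choice of $Q$ (so that the tail $R_Q$ is $L^2$-negligible, as in the proof of Proposition \ref{prop:sumfreeL^1}) immediately yields $\|F_A\|_1 \gg C$ and we may take $B=A$. So assume $\|\hat 1_A\|_1 \ll C \log N$. The first inverse theorem (Section 5) should then convert this smallness into small additive dimension $\dim(A) \ll C^{O(1)}$; the natural strategy here is to combine Rudin's inequality (Theorem \ref{th:rudin}) applied to a maximal dissociated $D \subset A$ with a dual test-function construction in the style of Lemma \ref{lem:basicMPS}, forcing $\|\hat 1_A\|_1$ to exceed $C \log N$ once $|D|$ is too large.

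Armed with small $\dim(A)$, the set $A$ sits in a low-rank generalised arithmetic progression, and an $F_4$-Freiman-isomorphism argument (Section 6) replaces $A$ by a dense copy $B$ inside a short interval, giving quantitative $L^2$ and pointwise control of $c_B + R_Q$ and related truncations of $F_B$. Next (Section 7), a combinatorial pigeonhole based on small $\dim(B)$ should force a large proportion of $B$ into a single residue class modulo each small prime power $p^k$ with $p \leq (\log N)^{1/2}$; a second dichotomy then shows that either $\|F_B\|_1 \gg C$ already (via Proposition \ref{prop:basicestimate}(iv) applied to suitably sifted versions exploiting the $L^2$ bounds from Section 6) or else the joint distribution of $B$ modulo products of small primes is highly rigid.

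The final step (Section 8) feeds this non-Archimedean rigidity into Lemma \ref{lem:basicMPS}: one chooses sets $X_i$ parameterised by the rigid residue classes modulo small prime powers, forms the associated $g_i, Q_i$ and the 1-bounded test function $\Phi_J$, and verifies $\langle F_B, \Phi_J\rangle \gg \log\log N$, yielding $\|F_B\|_1 \gg \log\log N$ via duality. I expect this last step to be the main obstacle. The McGehee--Pigno--Smith machinery was designed around one-sided Fourier spectra, which $F_B$ does not possess since the cosines contribute both $\pm na$; one must therefore select the $X_i$ carefully so that the $g_i$ correlate with the coefficients $\chi(n)/n$ of $\phi-1/3$ rather than with $\hat 1_B$ alone. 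Extracting a $\log\log N$ gain --- rather than the merely absolute-constant gain from Bourgain --- requires summing constructively over roughly $\log\log N$ many primes $p \leq (\log N)^{1/2}$, with the Mertens factor appearing in Proposition \ref{prop:basicestimate}(iv) precisely absorbed by the multiplicative structure of the character $\chi$.
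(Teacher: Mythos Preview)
Your outline tracks the paper's strategy closely --- dichotomy on $\lVert c_A\rVert_1$, small dimension via Rudin plus a dual construction, passage to a dense $F_4$-model, residue-class pigeonhole, and a non-Archimedean McGehee--Pigno--Smith endgame --- so the architecture is correct. Two points need correction, one minor and one substantive.

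First, the minor one: from $\lVert \hat 1_A\rVert_1 \ll C\log N$ you will only extract $\dim(A) \ll C^2(\log N)^3$, not $C^{O(1)}$ (cf.\ Theorem~\ref{th:smallL^1smalldim}). This is harmless for the argument --- it is still enough to place an $F_4$-copy of $A$ inside $[-T,T]$ with $T=e^{(\log N)^{O(1)}}$ --- but your stated bound is too optimistic.

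The substantive gap is in your account of where the $\log\log N$ gain arises. It does \emph{not} come from summing over $\approx \log\log N$ many primes, nor is the Mertens factor in Proposition~\ref{prop:basicestimate}(iv) absorbed by the multiplicativity of $\chi$. The key idea you are missing is to \emph{project} $F_{\mathrm{med}}$ (obtained by sifting only the medium primes in $(Q_1,Q]$) onto a residue class $r\prod p^{\nu_p}\pmod{\prod p^{\nu_p+1}}$. This projection costs nothing in $L^1$ (Lemma~\ref{lem:L^1projection}) and simultaneously forces the inner variable $n$ to be $Q$-rough, so it \emph{replaces} Bourgain's sifting of primes $p\leqslant Q_1$ without paying the Mertens loss $\log Q_1\asymp \log\log N$. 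After projection one is left with $\tfrac12\hat 1_{A(r,(\nu_p))\cup -A(-r,(\nu_p))}+E$ with $E$ small in $L^2$, to which a standard (one-sided) MPS argument applies; this is Proposition~\ref{prop:distmodpstruct}. In the complementary structured case (Section~8), the test function is built not from one $X_i$ per prime but from a $\prec$-increasing \emph{chain} of residue classes $A(r^{(i)},(\nu_p^{(i)}))$ of length $J\gg(\log N)/\log\log N$; the $g_i$ are just normalised indicators of these sets, the correction functions $Q_i$ have Fourier support in $q_i\mathbf{Z}$ by periodicity, and the divisibility $q_1\mid q_2\mid\cdots$ replaces the one-sidedness in the classical MPS construction. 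There is no need to make the $g_i$ correlate with the coefficients $\chi(n)/n$; those contributions for $n>1$ are handled as $L^2$-small error terms via Lemma~\ref{lem:generalcont} once $A$ lives in a short interval.
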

For clarity of exposition however, we have written our arguments in this paper to simply give bounds for $S(B)$, where $B$ is $F_4$-isomorphic to $A$, rather than for $\lVert F_B\rVert_1$, but one can check that it is in fact the result above that our proof gives.
Note also that this theorem immediately implies Theorem \ref{th:main}. To see this, recall that $S(A)=S(B)\geqslant N/3+c\lVert F_B\rVert_1$ by (i) and (ii) in Proposition \ref{prop:basicestimate}.
\section{The structure of sets with small $L^1$-norm}
The goal of this section is to prove two structural theorems for sets whose Fourier transform has small $L^1$-norm. The first shows that if $\hat{1}_B$ has small $L^1$ norm for some $B\subset \mathbf{Z}$, then its additive dimension $\dim(B)$ is small. The second shows, again under the assumption that $\lVert \hat{1}_B\rVert_1$ is small, that every large subset of $B$ has large additive energy. In fact, we shall need to prove a more general result which establishes these conclusions under a weaker condition that $\lVert \hat{f} \rVert_1$ is small for a function $f:\mathbf{Z}\to\mathbf{C}$ which satisfies $f(b)\gg 1$ for all $b\in B$. For comparison, note that we always have
$$\log |B|\ll \lVert \hat{1}_B\rVert_1\leqslant |B|^{1/2},$$
where the lower bound follows from the Littlewood $L^1$ conjecture and the upper bound from the simple estimate $\lVert \hat{1}_B\rVert_1\leqslant \lVert \hat{1}_B\rVert_2=|B|^{1/2}$ by Parseval. Both the upper and lower bounds are tight up to a constant factor in general.\footnote{In fact, it is a well-known problem to determine either of these constants.}

\medskip

Before we state the first theorem, the reader may want to recall the Definition \ref{def:addidimensio} of dissociativity and additive dimension.

\begin{theorem}\label{th:smallL^1smalldim}
    Let $f:\mathbf{Z}\to\mathbf{C}$ be a function with $\lVert \hat{f}\rVert_\infty=N$ and let $D\subset \mathbf{Z}$ be any dissociated set. If $\min_{n\in D}|f(n)|\geqslant 1$, then
    \begin{equation}\label{smallL^1smalldimgeneral}
       \lVert \hat{f}\rVert_1\gg \left(\frac{|D|}{\log N}\right)^{1/2}. 
    \end{equation}
\end{theorem}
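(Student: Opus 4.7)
The plan is to argue by duality: since $\lVert \hat f\rVert_1 = \sup\{|\langle \hat f, \Phi\rangle|: \lVert \Phi\rVert_\infty\leqslant 1\}$, it suffices to construct a test function $\Phi:\mathbf{T}\to\mathbf{C}$ with small $L^\infty$ norm for which $\langle \hat f, \Phi\rangle$ is large. I first introduce a phase function $g:\mathbf{Z}\to\mathbf{C}$ supported on $D$ by setting $g(n) = f(n)/|f(n)|$ for $n\in D$ and $g(n)=0$ otherwise. Then $|g|=1$ on $D$, and Parseval yields both
\begin{equation*}
\langle \hat f,\hat g\rangle = \sum_{n\in D}|f(n)| \geqslant |D|\qquad\text{and}\qquad \lVert \hat g\rVert_2 = |D|^{1/2}.
\end{equation*}
The crucial input is Rudin's inequality (Theorem \ref{th:rudin}), which, since $\supp g\subseteq D$ is dissociated, gives the moment bound $\lVert \hat g\rVert_p\leqslant 10\sqrt p\,|D|^{1/2}$ for every $p\in[2,\infty)$.

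The main step is to truncate $\hat g$ to make it bounded while preserving the bulk of its pairing with $\hat f$. For a threshold $T>0$ to be chosen, I set $\Phi = \hat g\cdot 1_{\{|\hat g|\leqslant T\}}$, so $\lVert \Phi\rVert_\infty\leqslant T$. Then
\begin{equation*}
\langle \hat f,\Phi\rangle = \langle \hat f,\hat g\rangle - \int_{|\hat g|>T}\hat f\,\overline{\hat g}\,dx,
\end{equation*}
and the error term is bounded using $\lVert \hat f\rVert_\infty\leqslant N$ together with the Markov-type inequality $\int_{|\hat g|>T}|\hat g|\,dx\leqslant T^{1-p}\lVert \hat g\rVert_p^p$, which by Rudin is at most $T\bigl(10\sqrt p\,|D|^{1/2}/T\bigr)^{p}$. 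Choosing $T = 20\sqrt p\,|D|^{1/2}$ makes the ratio inside equal to $1/2$, shrinking the error to at most $N T 2^{-p}$; taking $p$ of order $\log N$ then renders the error at most $|D|/2$, so that $\langle \hat f,\Phi\rangle\geqslant |D|/2$. The duality bound yields
\begin{equation*}
\lVert \hat f\rVert_1 \geqslant \frac{|\langle \hat f,\Phi\rangle|}{\lVert \Phi\rVert_\infty} \gg \frac{|D|}{T}\gg \left(\frac{|D|}{\log N}\right)^{1/2}.
\end{equation*}

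A small case should be handled separately: if $|D|\ll \log N$ the target bound is $O(1)$, which follows at once from the trivial pointwise inequality $|f(d)| = \bigl|\int \hat f(x)e(-dx)\,dx\bigr|\leqslant \lVert \hat f\rVert_1$ applied to any $d\in D$. The main obstacle is the delicate balancing of parameters: the moment exponent $p$ must be large enough (of order $\log N$) so that $2^{-p}$ absorbs the factor $\lVert \hat f\rVert_\infty = N$ appearing in the error estimate, but enlarging $p$ simultaneously enlarges the threshold $T\sim \sqrt p\,|D|^{1/2}$ and thereby weakens the final ratio $|D|/T$. The $\sqrt p$ dependence in Rudin's inequality is exactly tight enough to allow this balance to succeed, and it is precisely this factor that produces the $\sqrt{\log N}$ in the denominator of the target estimate.
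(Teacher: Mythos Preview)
Your proof is correct and takes a genuinely different route from the paper's. Both arguments use duality and Rudin's inequality for the phase function $g$ supported on $D$, but the test functions are built differently. The paper follows a McGehee--Pigno--Smith iteration: with $g$ normalised so that $\lVert \hat g\rVert_\infty\leqslant 1$, it sets $Q=e^{-|\hat g|}$ and $\Phi_J=\hat g(1+Q+\cdots+Q^{J-1})$, then bounds the resulting error $\langle \hat f,\hat g(1-Q)Q^{k-1}\rangle$ via H\"older with exponent $p=1+1/\log N$ (so that $\lVert \hat f\rVert_p\ll\lVert \hat f\rVert_1$) together with Rudin on $\lVert \hat g\rVert_{2q}$; optimising $J\asymp(|D|/\log N)^{1/2}$ gives the bound. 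Your argument instead uses a hard truncation $\Phi=\hat g\cdot 1_{\{|\hat g|\leqslant T\}}$ and controls the tail directly via $\lVert \hat f\rVert_\infty=N$ and a Chebyshev--Markov estimate from Rudin's moment bound, balancing $T\asymp\sqrt p\,|D|^{1/2}$ with $p\asymp\log N$.

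Your approach is shorter and more elementary, needing no iterative construction. What the paper's route buys is an explicit, smooth test function of MPS type; the author remarks after the proof that this is the point of giving a new argument, and indeed the same MPS template (Lemma~\ref{lem:basicMPS}) is reused throughout Sections~5, 7, 8 and Appendix~A, so the proof here doubles as a warm-up for that machinery. A minor stylistic difference: the paper's H\"older step expresses the error in terms of $\lVert \hat f\rVert_1$ itself (so the final inequality is self-referential and then solved for $\lVert \hat f\rVert_1$), whereas you feed the hypothesis $\lVert \hat f\rVert_\infty=N$ in directly.
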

\begin{cor}\label{cor:smalldim}
    Let $B\subset\mathbf{Z}$ be a finite set of integers. Then
    \begin{equation}\label{smallL^1smalldim}
       \dim(B)\ll \lVert \hat{1}_B\rVert_1^{2} \log |B|. 
    \end{equation}
\end{cor}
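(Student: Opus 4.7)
The plan is to derive Corollary \ref{cor:smalldim} as an essentially immediate consequence of Theorem \ref{th:smallL^1smalldim}, so there is no serious obstacle: the entire content of the corollary is repackaging the general bound in the special case $f = 1_B$.

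Concretely, I would take $f = 1_B$, so that $\hat{f} = \hat{1}_B$ and
\[
\lVert \hat{f}\rVert_\infty = \hat{1}_B(0) = |B|,
\]
giving the value $N = |B|$ in the hypothesis of Theorem \ref{th:smallL^1smalldim}. Next, let $D \subset B$ be a dissociated subset realising the additive dimension of $B$, i.e.\ $|D| = \dim(B)$ (such a maximal $D$ exists because $B$ is finite; and by Definition \ref{def:addidimensio} we may indeed take $D \subseteq B$). For every $n \in D$ we have $f(n) = 1_B(n) = 1$, so certainly $\min_{n \in D}|f(n)| \geqslant 1$, and the hypothesis of Theorem \ref{th:smallL^1smalldim} is satisfied.

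Plugging into the conclusion of Theorem \ref{th:smallL^1smalldim}, one obtains
\[
\lVert \hat{1}_B\rVert_1 \;\gg\; \left(\frac{\dim(B)}{\log |B|}\right)^{1/2}.
\]
Squaring and rearranging yields $\dim(B) \ll \lVert \hat{1}_B\rVert_1^{2}\log |B|$, which is exactly the claim of the corollary. The only minor point worth noting is that the corollary is vacuous unless $|B| \geqslant 2$, but in that edge case one may check directly that $\dim(B) \leqslant 1 \ll \lVert \hat{1}_B\rVert_1^2 \log|B|$ (using e.g.\ $\lVert \hat{1}_B\rVert_1 \geqslant 1$ whenever $B \neq \emptyset$, since $\int |\hat{1}_B| \geqslant |\int \hat{1}_B \cdot e(-bx)\,dx| = 1$ for any $b \in B$), so no separate argument is really needed.
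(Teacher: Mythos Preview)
Your proposal is correct and is exactly the intended derivation: the paper does not give a separate proof of the corollary, as it follows immediately from Theorem \ref{th:smallL^1smalldim} by taking $f=1_B$ (so $N=|B|$) and $D$ a maximal dissociated subset of $B$. Your edge-case remark is slightly off (when $|B|=1$ the right-hand side vanishes, so the inequality is genuinely false or one must read $|B|\geqslant 2$), but this is a triviality the paper also glosses over.
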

\begin{rmk*}
   The bound in \eqref{smallL^1smalldimgeneral} is best possible up to a constant factor for general $f$. As an example, one can take a Fej\'er kernel $\hat{f}=F_N(x)=\sum_{n=-N}^N\left(1-\frac{|n|}{N}\right)e(nx)$ which has $\lVert F_N\rVert_1=1$, while $f(n)\geqslant 1/2$ for all $n$ in the dissociated set $\{2^j:j<\log_2 N-1\}$.
\end{rmk*}

\begin{proof}[Proof of Theorem \ref{th:smallL^1smalldim}]
Let $D\subset\mathbf{Z}$ be dissociated. We define the function $g:D\to\mathbf{C}$ by $g(d)=|D|^{-1}\frac{f(d)}{|f(d)|}$. We also define the corresponding `correction' function $Q(t)=\exp(-|\hat{g}(t)|)$ and note that $|Q(t)|\leqslant 1$ and $|1-Q(t)|\leqslant |\hat{g}(t)|$, by Lemma \ref{lem:basicMPS}. We further know from Lemma \ref{lem:basicMPS} that if $\Phi_j$ is defined as follows:
\begin{align*}
    \Phi_{j}(t)&=\hat{g}(t)(1+Q(t)+\dots+Q^{j-1}(t)),
\end{align*}
then $\lVert \Phi_j\rVert_\infty\leqslant 10$ for all $j$.

\medskip

Let us fix an integer $J$. By a telescoping identity, we see that \begin{align*}\Phi_J(t)&=J\hat{g}(t)-\sum_{j=1}^{J-1}\sum_{k=1}^j\hat{g}(t)(1-Q(t))Q(t)^{k-1}.
\end{align*}Then as $\lVert \Phi_J\rVert_\infty\leqslant 10$, we have
\begin{align}
    \lVert \hat{f}\rVert_1&\gg \langle \hat{f},\Phi_J\rangle\nonumber\\
    &= J\langle \hat{f},\hat{g}\rangle-E\nonumber\\
    &\geqslant J\min_{d\in D}|f(d)|-|E|,\label{eq:JEbound}
\end{align}
where we used the last equation in \eqref{basicMPS} and we defined $$E=\sum_{1\leqslant k\leqslant j\leqslant J-1}\langle \hat{f},\hat{g}(1-Q)Q^{k-1}\rangle.$$ To conclude, we bound $E$ and optimise the choice of $J$. Note that by the properties of $\hat{g}$ and $Q$ we get
\begin{align}
    |E|&\leqslant \sum_{1\leqslant k\leqslant j\leqslant J-1}\langle |\hat{f}|,|\hat{g}|^2\rangle\nonumber\\
    &\leqslant J^2 \lVert \hat{f}\rVert_p\lVert \hat{g}\rVert_{2q}^2,\label{eq:Eupp}
\end{align}
where we used H\"older's inequality with exponent pair $(p,q)$ such that $\frac{1}{p}+\frac{1}{q}=1$. To estimate the first $L^p$-norm in terms of the $L^1$-norm of $\hat{f}$, we take $p=1+1/\log N$ so that $|\hat{f}|^p\ll |\hat{f}|$ because of our assumption that $\lVert \hat{f}\rVert_\infty =N$, and hence
\begin{align*}
    \lVert \hat{f}\rVert_p\ll \lVert \hat{f}\rVert_1^{1/p}\leqslant \lVert \hat{f}\lVert_1
\end{align*}
using also that $\lVert \hat{f}\rVert_1 \geqslant \langle \hat{f}, e(d\cdot)\rangle=1$ for $d\in D$. To bound $\lVert \hat{g}\rVert_{2q}$, we use the fact that $\supp g\subseteq D$ is dissociated so that by Rudin's inequality in Theorem \ref{th:rudin} we obtain
\begin{align*}
    \lVert \hat{g}\rVert_{2q}\ll q^{1/2}\lVert \hat{g}\rVert_2\ll q^{1/2}|D|^{-1/2},
\end{align*}
where we used Parseval to evaluate $\lVert \hat{g}\rVert_2$.
In total, since $q\ll \log N$ we can bound \eqref{eq:Eupp} by
\begin{align*}
    |E|&\ll J^2\lVert \hat{f}\rVert_1 \frac{\log N}{|D|},
\end{align*}
and we can substitute this back in \eqref{eq:JEbound} to deduce that
\begin{align*}
    \lVert \hat{f}\rVert_1+J^2\lVert \hat{f}\rVert_1\frac{\log N }{|D|}\gg J\min_{n\in D}|f(n)|\geqslant J.
\end{align*}
Taking $J=\lfloor(|D|/\log N)^{1/2}\rfloor$ shows that $\lVert \hat{f}\rVert_1\gg (|D|/\log N)^{1/2}$ as desired.

\end{proof}
\begin{rmk*}
    \normalfont The author would like to thank Thomas Bloom for pointing out that Zygmund \cite[Chapter XII, (7.6)]{zygmund}  proves that if $(n_j)$ is a lacunary (i.e. $n_{j+1}/n_j>c>1$) and $\hat{f}(\log^+|\hat{f}|)^{1/2}$ is integrable, then $\sum_j |f(n_j)|^2$ converges. Interestingly, Pichorides noted that this proof works when $\{n_j\}$ is dissociated and that a quantitative version of his argument yields the bound in Theorem \ref{th:smallL^1smalldim}. Pichorides \cite{pichorides} in fact used this to establish what was at the time the best bound towards the Littlewood $L^1$ conjecture, and Konyagin \cite{konyaginl} makes use of similar ideas (but about the number of distinct $2$-adic valuations of the $n_j$ rather than dissociativity). We include the short proof above since it is different and in particular constructs an explicit test function witnessing that $\lVert \hat{f}\rVert_1$ is large.
\end{rmk*}
In the Proposition \ref{prop:sumfreeL^1}, we showed that if $A$ is a set of $N$ integers such that $S(A) \leqslant N/3+C$, then $\lVert \hat{1}_A+\hat{1}_{-A}\rVert \ll C\log N$. Applying Theorem \ref{th:smallL^1smalldim} with $f=1_A+1_{-A}$ therefore shows the following.
\begin{cor}\label{cor:sum-freesmalldim}
    Let $A\subset\mathbf{Z}\setminus\{0\}$ have size $N$ and let $S(A)\leqslant N/3+C$. Then $\dim(A)\ll C^2(\log N)^3$.
\end{cor}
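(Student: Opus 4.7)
The plan is to combine the two previous results directly: Proposition \ref{prop:sumfreeL^1}, which translates the assumption $S(A)\leqslant N/3+C$ into the Fourier-analytic statement $\lVert \hat{1}_A+\hat{1}_{-A}\rVert_1 \ll C\log N$, and Theorem \ref{th:smallL^1smalldim}, which turns smallness of $\lVert \hat{f}\rVert_1$ into an upper bound on the size of any dissociated set on which $|f|\geqslant 1$.

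Concretely, I would take $f=1_A+1_{-A}$ and let $D\subseteq A$ be a maximum dissociated subset of $A$, so that $|D|=\dim(A)$. Since $0\notin A$, for each $d\in D\subseteq A$ we have $f(d)\in\{1,2\}$ (it equals $2$ only if $d\in A\cap(-A)$, otherwise $1$), so the hypothesis $\min_{n\in D}|f(n)|\geqslant 1$ is automatic. Moreover $\lVert \hat{f}\rVert_\infty\leqslant \lVert \hat{1}_A\rVert_\infty+\lVert \hat{1}_{-A}\rVert_\infty\leqslant 2N$ by evaluating at $0$, so Theorem \ref{th:smallL^1smalldim} (with $N$ replaced by $2N$, which only affects constants inside a $\log$) yields
\begin{equation*}
\lVert \hat{f}\rVert_1 \gg \left(\frac{\dim(A)}{\log N}\right)^{1/2}.
\end{equation*}

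Combining this with the upper bound $\lVert \hat{f}\rVert_1 \ll C\log N$ from Proposition \ref{prop:sumfreeL^1} and squaring, one obtains $\dim(A)\ll C^2(\log N)^3$, as required.

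There is essentially no substantive obstacle at this stage, since both ingredients are already in place: the Fourier-analytic work was done in Proposition \ref{prop:sumfreeL^1} and the Rudin/McGehee--Pigno--Smith-style test-function argument was carried out in Theorem \ref{th:smallL^1smalldim}. The only points that require a moment of care are ensuring that $f=1_A+1_{-A}$ meets the hypotheses of Theorem \ref{th:smallL^1smalldim} (using $0\notin A$ for the lower bound on $|f(d)|$, and the trivial bound $\lVert \hat{f}\rVert_\infty\leqslant 2N$ for the $L^\infty$ hypothesis) and tracking that the extra factor of $2$ inside the logarithm is harmless.
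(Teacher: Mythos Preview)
Your proposal is correct and follows exactly the paper's own approach: apply Proposition \ref{prop:sumfreeL^1} to obtain $\lVert \hat{1}_A+\hat{1}_{-A}\rVert_1\ll C\log N$, then feed $f=1_A+1_{-A}$ into Theorem \ref{th:smallL^1smalldim}. Your verification of the hypotheses (that $|f(d)|\geqslant 1$ on $D\subset A$ since $0\notin A$, and $\lVert\hat{f}\rVert_\infty\leqslant 2N$) is in fact more explicit than what the paper writes.
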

The fact that $A$ has small dimension implies that $A$ has strong additive structure as we will prove in the next section. 

\medskip

We first discuss the next theorem, which roughly speaking states that if $\lVert \hat{f}\rVert_1$ is small, then every large set $A$ with $\min_{a\in A}|f(a)|\gg 1$ has large additive energy. For sets $B,B'\subset\mathbf{Z}$, we define the joint additive energy $$E(B,B')=\#\{(b_1,b_2\in B, b_1',b_2'\in B':b_1-b_2=b_1'-b_2'\}$$ and the additive energy of a set is defined to be $E(B)=E(B,B)$. We also point out that this theorem is used to obtain the structure Theorem \ref{th:mainstructure}, but that it is not required for Theorem \ref{th:main}. 
\begin{theorem}\label{th:smallL^1largeenergy}
    Let $f:\mathbf{Z}\to\mathbf{C}$ be a function with $\lVert \hat{f}\rVert_2\ll N^{1/2}$ and let $K=100\lVert \hat{f}\rVert_1$. Let $X_1,X_2,\dots,X_K\subset \mathbf{Z}$ be any sets such that $\min_{n\in X_i}|f(n)|\geqslant 1/2$. Then there exists distinct $j,j'\in[K]$ such that
    \begin{equation}\label{largeenergythm}
       E(X_j,X_{j'})\gg K^{-2}\frac{|X_j|^2|X_{j'}|^2}{N}. 
    \end{equation}
\end{theorem}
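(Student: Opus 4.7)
The plan is to adapt the M-P-S test function construction from the proof of Theorem \ref{th:smallL^1smalldim}, this time applied simultaneously to the sets $X_1,\dots,X_K$. The aim is to extract a lower bound on the weighted sum $\sum_{1\leqslant i<k\leqslant K}E(X_i,X_k)^{1/2}/(|X_i||X_k|)$ and then pigeonhole over the $\binom{K}{2}$ pairs.

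First, apply Lemma \ref{lem:basicMPS} with the sets $X_1,\dots,X_K$ and the function $f$, yielding functions $g_i$, $Q_i$ and the test function $\Phi_K=\sum_{i=1}^{K}\hat{g}_i\,Q_{i+1}\cdots Q_K$ (empty products understood as $1$) with $\lVert\Phi_K\rVert_\infty\leqslant 10$. Since $|f(n)|\geqslant 1/2$ on $X_i$, the lemma gives $\langle\hat{f},\hat{g}_i\rangle\geqslant 1/2$. Using the telescoping identity $Q_{i+1}\cdots Q_K=1-\sum_{k=i+1}^{K}(1-Q_k)Q_{k+1}\cdots Q_K$, I decompose
$$\langle\hat{f},\Phi_K\rangle=\sum_{i=1}^{K}\langle\hat{f},\hat{g}_i\rangle-E,\qquad E=\sum_{1\leqslant i<k\leqslant K}\langle\hat{f},\hat{g}_i(1-Q_k)Q_{k+1}\cdots Q_K\rangle.$$
Combining $\langle\hat{f},\Phi_K\rangle\leqslant 10\lVert\hat{f}\rVert_1=K/10$ with the main-term bound $\geqslant K/2$ forces $|E|\geqslant 2K/5$.

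Next I bound $|E|$ in terms of the additive energies. Using $|1-Q_k|\leqslant|\hat{g}_k|$, $|Q_\ell|\leqslant 1$ and Cauchy--Schwarz against $\hat{f}$,
$$|E|\leqslant\sum_{1\leqslant i<k\leqslant K}\int|\hat{f}||\hat{g}_i||\hat{g}_k|\leqslant\lVert\hat{f}\rVert_2\sum_{1\leqslant i<k\leqslant K}\lVert\hat{g}_i\hat{g}_k\rVert_2.$$
The key observation is that $\lVert\hat{g}_i\hat{g}_k\rVert_2^2=\int|\hat{g}_i|^2|\hat{g}_k|^2$, and since $|\hat{g}_i|^2$ is the Fourier transform of the autocorrelation $g_i*\tilde{g}_i$ (with $\tilde{g}_i(n)=\overline{g_i(-n)}$), Parseval gives $\int|\hat{g}_i|^2|\hat{g}_k|^2=\sum_n(g_i*\tilde{g}_i)(n)\overline{(g_k*\tilde{g}_k)(n)}$. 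The triangle inequality together with $|g_i(m)|=|X_i|^{-1}1_{X_i}(m)$ bounds $|(g_i*\tilde{g}_i)(n)|\leqslant|X_i|^{-2}r_{X_i}(n)$ where $r_{X_i}(n)=\#\{(m,m')\in X_i^2:m-m'=n\}$, and hence the sum is at most $|X_i|^{-2}|X_k|^{-2}\sum_n r_{X_i}(n)r_{X_k}(n)=|X_i|^{-2}|X_k|^{-2}E(X_i,X_k)$.

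Combining the two estimates with $\lVert\hat{f}\rVert_2\ll N^{1/2}$ yields
$$\sum_{1\leqslant i<k\leqslant K}\frac{E(X_i,X_k)^{1/2}}{|X_i||X_k|}\gg\frac{K}{N^{1/2}},$$
and pigeonholing over the $\binom{K}{2}\ll K^2$ pairs produces distinct $j,j'\in[K]$ with $E(X_j,X_{j'})^{1/2}/(|X_j||X_{j'}|)\gg 1/(KN^{1/2})$, which is exactly \eqref{largeenergythm}. The main obstacle I anticipate is the careful bookkeeping of constants (the $10$ from $\lVert\Phi_K\rVert_\infty\leqslant 10$, the factor of $1/2$ coming from $|f(n)|\geqslant 1/2$, and the implicit constant in $\lVert\hat{f}\rVert_2\ll N^{1/2}$) to ensure that $K/10$ from the upper bound on $\langle\hat{f},\Phi_K\rangle$ is strictly beaten by the main term $K/2$, forcing a genuinely large error. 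Everything else is essentially mechanical once the Parseval identification of $\int|\hat{g}_i|^2|\hat{g}_k|^2$ with the additive energy (up to phase-cancellation losses absorbed by the triangle inequality) is in place.
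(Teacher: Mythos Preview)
Your proposal is correct and follows essentially the same approach as the paper: the same M-P-S test function $\Phi_K$, the same telescoping decomposition, the same Cauchy--Schwarz bound $|E|\leqslant\lVert\hat f\rVert_2\sum_{i<k}\lVert\hat g_i\hat g_k\rVert_2$, and the same identification $\lVert\hat g_i\hat g_k\rVert_2^2\leqslant|X_i|^{-2}|X_k|^{-2}E(X_i,X_k)$. The only cosmetic difference is that the paper argues by contradiction (assuming the energy bound fails for \emph{all} pairs and deriving $K/10\geqslant K/2-O(c^{1/2}K)$), whereas you prove the aggregate inequality $\sum_{i<k}E(X_i,X_k)^{1/2}/(|X_i||X_k|)\gg K/N^{1/2}$ directly and then pigeonhole; these are equivalent arrangements of the same argument.
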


\begin{proof}
     We argue by contradiction, assuming that we can find $X_1,X_2,\dots,X_K$ such that $\min_{n\in X_i}|f(n)|\geqslant 1/2$ and
    \begin{equation*}
       E(X_j,X_{j'})\leqslant cK^{-2}\frac{|X_j|^2|X_{j'}|^2}{N} 
    \end{equation*} for all $j<j'$, where $c>0$ is some absolute constant to be determined later. Define the functions
    \begin{align*}
        g_i:\mathbf{Z}\to\mathbf{C}:g_i(n)=\begin{cases}
            |X_i|^{-1}\frac{f(n)}{|f(n)|} & \text{if $n\in X_i$,}\\
            0 & \text{otherwise},
        \end{cases}
    \end{align*}
and further define $Q_i(x)=e^{-|\hat{g}_i(x)|}$ and
\begin{equation*}
    \Phi_j=\hat{g}_j+\hat{g}_{j-1}Q_j+\dots+\hat{g}_1Q_2\dots Q_j.
\end{equation*}
We again have the basic inequalities $|Q_j(t)|\leqslant 1, |1-Q_j(t)|\leqslant |\hat{g}_j(t)|$ and note that $\lVert \Phi_j\rVert_\infty\leqslant 10$ for all $j$ by Lemma \ref{lem:basicMPS}. We define
$$Z(t)=\sum_{j=1}^K \hat{g}_j(1-Q_{j+1}\dots Q_K)$$ and hence,
\begin{align}
    10\lVert \hat{f}\rVert_1&\geqslant \langle \hat{f},\Phi_K\rangle\nonumber\\
    &= \sum_{j=1}^K\langle \hat{f},\hat{g}_j\rangle-\langle \hat{f},Z\rangle\nonumber\\
    &\geqslant K\min_{n\in \cup_j X_j}|f(n)|-|\langle \hat{f},Z\rangle|,\label{J,Ebound}
\end{align}
where we used the last equation in \eqref{basicMPS}. By a telescoping identity, we may rewrite \begin{align*}Z(t)=\sum_{j=1}^{K-1}\sum_{k=j+1}^K\hat{g}_j(t)(1-Q_k(t))Q_{j+1}(t)\dots Q_{k-1}(t)
\end{align*}
which yields the following upper bound
\begin{align*}
    |\langle \hat{f},Z\rangle|&\leqslant \sum_{1\leqslant j<k\leqslant K}\langle |\hat{f}|,|\hat{g}_j||\hat{g}_k|\rangle\\
    &\leqslant \sum_{1\leqslant j<k\leqslant K} \lVert \hat{f}\rVert_2\lVert\hat{g}_j\hat{g}_k\rVert_2
\end{align*}
by the Cauchy-Schwarz inequality. 
By assumption we can bound $\lVert \hat{f}\rVert_2 \ll N^{1/2}$. As $\hat{g}_i(t)=|X_i|^{-1}\sum_{n\in X_i}\frac{f(n)}{|f(n)|}e(nt)$, the norms $\lVert\hat{g}_j\hat{g}_k\rVert_2$ can be explicitly calculated using the orthogonality of characters as follows: 
\begin{align*}
|X_j|^2|X_k|^2\lVert\hat{g}_j\hat{g}_k\rVert_2^2&=\int_0^1\left|\sum_{n\in X_j}\frac{f(n)}{|f(n)|}e(nt)\right|^2\left|\sum_{n\in X_k}\frac{f(n)}{|f(n)|}e(nt)\right|^2\,dt\\
    &=\sum_{\substack{n_1,n_2\in X_j,n_3,n_4\in X_k\\n_1-n_2=n_3-n_4}}\frac{f(n_1)\overline{f(n_2)f(n_3)}f(n_4)}{|f(n_1)f(n_2)f(n_3)f(n_4)|}\\
    &\leqslant \#\{n_1,n_2\in X_j,n_3,n_4\in X_k:n_1-n_2=n_3-n_4\}\\
    &=E(X_j,X_k).
\end{align*}
As we are assuming that $E(X_j,X_k)\leqslant cK^{-2}|X_j|^2|X_k|^2/N$, we conclude that 
\begin{align}\label{largeenergyEbound}
    |\langle \hat{f},Z\rangle|&\ll \sum_{1\leqslant j<k\leqslant K} N^{1/2}c^{1/2}K^{-1}N^{-1/2}\leqslant c^{1/2}K.
\end{align}
Recall that $K=100\lVert \hat{f}\rVert_1$ and that $\min_{n\in \cup_j X_j}|f(n)|\geqslant 1/2$ so combining \eqref{J,Ebound} and \eqref{largeenergyEbound} produces the inequality
\begin{align*}
    \frac{K}{10}\geqslant 10\lVert \hat{f}\rVert_1\geqslant K \min_{n\in \cup_j X_j}|f(n)|-O(c^{1/2}K)\geqslant K/2-O(c^{1/2}K).
\end{align*}
This gives a contradiction upon choosing $c>0$ to be sufficiently small.

\end{proof}
One can apply Theorem \ref{th:smallL^1largeenergy} with $X_1=\dots=X_K=X$ to deduce the following result.
\begin{cor}\label{cor:smallL^1largeenergy}
    Let $f:\mathbf{Z}\to\mathbf{C}$ be a function with $\lVert \hat{f}\rVert_2\leqslant N^{1/2}$. Let $X\subset \mathbf{Z}$ be any set such that $\min_{n\in X}|f(n)|\geqslant 1/2$. Then 
    \begin{equation}\label{largeenergythm}
       E(X)\gg \lVert \hat{f}\rVert_1^{-2}\frac{|X|^4}{N}. 
    \end{equation}
\end{cor}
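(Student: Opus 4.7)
The plan is to apply Theorem \ref{th:smallL^1largeenergy} as a black box with the diagonal choice $X_1=X_2=\cdots=X_K=X$, where $K=\lceil 100\lVert\hat{f}\rVert_1\rceil$. First I would check that this choice of $K$ yields at least two distinct indices. By Fourier inversion and the triangle inequality, $|f(n)|\leqslant \lVert\hat{f}\rVert_1$ for every $n\in\mathbf{Z}$; applied to any $n\in X$ (which is nonempty whenever the conclusion is nontrivial) and combined with the hypothesis $\min_{n\in X}|f(n)|\geqslant 1/2$, this gives $\lVert\hat{f}\rVert_1\geqslant 1/2$, so $K\geqslant 50$.

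Next, the hypothesis $\min_{n\in X_i}|f(n)|\geqslant 1/2$ required by Theorem \ref{th:smallL^1largeenergy} holds trivially for every $i\in[K]$, since all $X_i$ coincide with $X$. The hypothesis $\lVert\hat{f}\rVert_2\ll N^{1/2}$ is inherited verbatim from the assumption on $f$ in the corollary. Thus Theorem \ref{th:smallL^1largeenergy} applies and produces distinct indices $j\neq j'\in[K]$ satisfying
\begin{equation*}
E(X_j,X_{j'})\gg K^{-2}\frac{|X_j|^2|X_{j'}|^2}{N}.
\end{equation*}

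Substituting $X_j=X_{j'}=X$, the left-hand side is precisely the additive energy $E(X,X)=E(X)$ and the right-hand side becomes $K^{-2}|X|^4/N \asymp \lVert\hat{f}\rVert_1^{-2}|X|^4/N$, which is exactly the stated bound. There is no real obstacle here: the corollary is a direct diagonal specialization of Theorem \ref{th:smallL^1largeenergy}, and the only point that requires a moment of thought is ensuring $K\geqslant 2$ so that distinct indices $j,j'$ can be extracted, which we have settled above.
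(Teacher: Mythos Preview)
Your proposal is correct and follows exactly the approach indicated in the paper, which simply applies Theorem \ref{th:smallL^1largeenergy} with the diagonal choice $X_1=\cdots=X_K=X$. You have in fact been more careful than the paper by explicitly verifying that $K\geqslant 2$ (so that distinct indices exist) and by handling the rounding of $K$ to an integer.
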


\section{Sets with small dimension have a `dense' model}
In this section we will show that sets of integers with small additive dimension are Freiman isomorphic to sets which have relatively large density on an interval. For example, we will show that a set $B\subset \mathbf{Z}$ with $\dim(B)\leqslant (\log |B|)^{O(1)}$ has a Freiman isomorphic copy $B'$ with $B'\subset[-e^{(\log |B|)^{O(1)}},e^{(\log |B|)^{O(1)}}]$. For reference, it is a well-known fact in additive combinatorics that any set $B\subset \mathbf{Z}$ has a Freiman isomorphic copy $B'\subset[-e^{O(|B|)},e^{O(|B|)}]$; the point of this section is to show that one can obtain significantly stronger bounds for sets with small dimension. The results in this section hold for any reasonable notion of Freiman isomorphism.
\begin{definition}
Let $G,G'$ be Abelian groups and let $A\subset G$, $A'\subset G'$. We say that a map $\phi:A\to A'$ is an \textit{$F_\ell$-homomorphism} if whenever $a_1,a_2,\dots,a_\ell\in A$ satisfy $$\varepsilon_1a_1+\varepsilon_2a_2+\dots+\varepsilon_\ell a_\ell=0$$
for some $\varepsilon_j\in\{-1,0,1\}$,
then $$\varepsilon_1\phi(a_1)+\varepsilon_2\phi(a_2)+\dots+\varepsilon_\ell\phi(a_\ell)=0.$$ We say that $A$ and $A'$ are \textit{$F_\ell$-isomorphic} if there is a bijective $F_\ell$-homomorphism $\phi:A\to A'$ so that $\phi^{-1}$ is also an $F_\ell$-homomorphism.
\label{frei}
\end{definition}

Our goal is to prove the following theorem, showing that sets with small additive dimension have a dense Freiman model. We note that Green and Ruzsa \cite{greenruzsa} proved a similar result under the assumption that the set has small doubling instead.
\begin{theorem}\label{th:densemodel}
    Let $A\subset \mathbf{Z}$ be a set such that any $F_\ell$-isomorphic set $A'\subset \mathbf{Z}$ satisfies $\dim(A')\leqslant k$.
    Then $A$ is $F_\ell$-isomorphic to a subset $B\subset [-T,T]$ where $T\ll \ell^{O(k\log k)}$.
 
\end{theorem}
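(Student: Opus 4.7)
The plan is to show that an $F_\ell$-isomorphic copy $A^* \subset \mathbf{Z}$ of $A$ minimising $T := \max_{a \in A^*}|a|$ must satisfy $T \ll \ell^{O(k\log k)}$: otherwise, a dilation-and-lift procedure produces a strictly smaller $F_\ell$-isomorphic copy, contradicting minimality. So let $A^*$ be minimal; by the hypothesis $\dim A^* \leq k$, fix a dissociated basis $D^* = \{d_1, \dots, d_k\} \subseteq A^*$ so that every $a \in A^*$ has a unique representation $a = \sum_i \varepsilon_i(a) d_i$ with $\varepsilon_i(a) \in \{-1,0,1\}$. Any $F_\ell$-relation in $A^*$ involves at most $\ell$ nonzero terms of absolute value $\leq T$, so by Bertrand's postulate we may pick a prime $N \in (\ell T, 2\ell T]$ for which Ruzsa's modular reduction makes the reduction $A^* \to A^* \bmod N$ an $F_\ell$-isomorphism.

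I would then apply Dirichlet's simultaneous approximation theorem to $(d_1/N, \dots, d_k/N) \in \mathbf{T}^k$ with parameter $Q := (8k\ell)^k$, obtaining an integer $q \in \{1, \dots, Q\}$ with $\|q d_i/N\|_{\mathbf{T}} \leq Q^{-1/k} = (8k\ell)^{-1}$ for each $i$. As long as $Q < N$, the primality of $N$ forces $\gcd(q, N) = 1$, so multiplication by $q$ is an $F_\ell$-automorphism of $\mathbf{Z}/N\mathbf{Z}$. Let $\widetilde{qd_i} \in (-N/2, N/2]$ denote the principal lift of $qd_i \bmod N$, so $|\widetilde{qd_i}| \leq N/(8k\ell)$. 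Setting $\psi(a) := \sum_i \varepsilon_i(a) \widetilde{qd_i}$, one checks that $\psi(a)$ is the lift in $(-N/2, N/2]$ of $qa \bmod N$ (since $|\psi(a)| \leq N/(8\ell) < N/2$), and in particular $|\psi(a)| \leq T/4$. A second application of Ruzsa's lemma (needing $N > \ell \max|\psi(A^*)| \leq N/8$, which is automatic) confirms that $\psi$ is the composition of three $F_\ell$-isomorphisms. Thus $\psi(A^*)$ is an $F_\ell$-isomorphic copy of $A$ with $\max|\cdot| < T$, contradicting the minimality of $T$.

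The contradiction is avoided precisely when $Q \geq N$, giving $(8k\ell)^k \geq N > \ell T$ and hence $T < (8k\ell)^k/\ell = 8^k k^k \ell^{k-1} \ll \ell^{O(k\log k)}$, since $k^k = 2^{k \log_2 k} \leq \ell^{O(k\log k)}$ for any $\ell \geq 2$. \textbf{The main obstacle} is the careful bookkeeping through the three-step composition (reduction mod $N$, dilation by $q$, lift back to $\mathbf{Z}$): each mod/lift step requires a quantitative size constraint on $N$, and all of these must be verified simultaneously for the argument to close. The exponent $O(k\log k)$ is essentially forced by the factor $k^k$ intrinsic to Dirichlet's bound $Q^{1/k}$ in dimension $k$; sharper simultaneous approximation is not available in general.
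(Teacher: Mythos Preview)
Your proof is correct and takes essentially the same approach as the paper's: choose an $F_\ell$-isomorphic copy with minimal diameter, embed it in $\mathbf{Z}/N\mathbf{Z}$ for a prime $N\in(\ell T,2\ell T]$, use a pigeonhole/Dirichlet argument on a maximal dissociated subset to find a dilation into a short interval, extend to all of $A^*$ via the span property, and contradict minimality. One harmless slip: the $\{-1,0,1\}$-representation over $D^*$ need not be unique (e.g.\ $D^*=\{1,2,4\}$ gives $3=1+2=4-1$), but your map $\psi$ is nonetheless well-defined as the principal lift of $qa\bmod N$, and any representation suffices to give the bound $|\psi(a)|\leqslant N/(8\ell)$.
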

The importance of this result in our setting is as follows. 
\begin{cor}[`Dense' model I]\label{cor:densemodel}
    Let $A\subset \mathbf{Z}$ be a set of size $N$ with $S(A)\leqslant N/3+C$. Then $A$ is $F_4$-isomorphic to a set $B\subset [-T,T]$ where $T\leqslant e^{O((C\log N)^4)}$.
\end{cor}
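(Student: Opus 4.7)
The plan is to derive this corollary as a direct combination of Corollary~\ref{cor:sum-freesmalldim} with Theorem~\ref{th:densemodel}. The main (and essentially only) task is to verify that the uniform dimension hypothesis of Theorem~\ref{th:densemodel} holds across every $F_4$-isomorphic copy of $A$.

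First I would show that every set $A'\subset\mathbf{Z}$ that is $F_4$-isomorphic to $A$ satisfies $\dim(A')\ll C^2(\log N)^3$. An $F_4$-isomorphism preserves cardinality, and by taking $\varepsilon_1=\varepsilon_2=1,\varepsilon_3=-1,\varepsilon_4=0$ in Definition~\ref{def:F_4isomo} it also preserves the relation $a+b=c$; hence $|A'|=N$ and $S(A')=S(A)\leqslant N/3+C$. Moreover, taking $\varepsilon_1=1$ (and the rest zero) shows that any $F_4$-isomorphism must map $0$ to $0$, so the situations $0\in A$ and $0\notin A$ transfer exactly to $A'$. We may therefore pass to $A'\setminus\{0\}$ if necessary, using the facts that every sum-free subset of $A'$ avoids $0$ (since $0+0=0$) and that dissociated sets never contain $0$, to conclude $S(A'\setminus\{0\})=S(A')$ and $\dim(A'\setminus\{0\})=\dim(A')$. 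Corollary~\ref{cor:sum-freesmalldim} applied to $A'\setminus\{0\}\subset\mathbf{Z}\setminus\{0\}$ then gives $\dim(A')\ll C^2(\log N)^3$.

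Setting $k=O(C^2(\log N)^3)$ and $\ell=4$, Theorem~\ref{th:densemodel} produces a set $B\subset[-T,T]$ that is $F_4$-isomorphic to $A$ and satisfies $T\ll 4^{O(k\log k)}=e^{O(k\log k)}$. Since $\log k\ll \log(C\log N)\leqslant C\log N$, we obtain
\[
k\log k\ll C^3(\log N)^4\leqslant (C\log N)^4,
\]
and hence $T\leqslant e^{O((C\log N)^4)}$ as required. As this is a clean combination of two results established earlier in the paper, there is no substantive obstacle beyond the bookkeeping around $0$ outlined above.
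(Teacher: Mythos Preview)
Your argument is correct and follows essentially the same route as the paper's proof: apply Corollary~\ref{cor:sum-freesmalldim} uniformly to every $F_4$-isomorphic copy $A'$ of $A$ (using that $S(A')=S(A)$) to obtain $\dim(A')\ll C^2(\log N)^3$, and then invoke Theorem~\ref{th:densemodel}. The paper's proof is terser, simply noting that the result follows from Theorem~\ref{th:densemodel} ``with a somewhat stronger bound than we claimed here''; your additional bookkeeping around $0$ and the explicit computation $k\log k\ll (C\log N)^4$ are correct elaborations of the same argument.
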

\begin{proof}[Proof of Corollary \ref{cor:densemodel}]
Let $A'\subset \mathbf{Z}$ be a set which is $F_4$-isomorphic to $A$. By Corollary \ref{cor:sum-freesmalldim} we have that $\dim(A')\ll C^2(\log N)^3$. The result now follows from Theorem \ref{th:densemodel} (with a somewhat stronger bound than we claimed here).
\end{proof}
We shall not be concerned with optimising $T$ for now; $T=e^{O((C\log N)^4)}$ is more than sufficient for our proof of Theorem \ref{th:main}. In section 9, we will show that if $S(A)\leqslant N/3+C$ then $A$ has a significantly denser $F_4$-model inside $[-T,T]$, where $T\leqslant N^{C^{O(1)}}$.
To prove Theorem \ref{th:densemodel}, we combine several combinatorial lemmas.
\begin{lemma}\label{lem:dissociated-set-generator}
Let $B \subseteq G$ be a finite subset of an Abelian group.  If $D$ is a maximal dissociated subset of $B$, then
$$B \subseteq \Span(D) \vcentcolon= \left\{\sum_{d\in D}\varepsilon_d d: \varepsilon_d\in\{-1,0,1\}\right\}.$$
\end{lemma}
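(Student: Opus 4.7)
The plan is a short case analysis driven purely by the maximality of $D$. First I would dispense with the trivial case: if $b\in D$, then $b = 1\cdot b + \sum_{d\in D\setminus\{b\}}0\cdot d$ is a representation of the required form, so $b\in\Span(D)$.

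Now suppose $b\in B\setminus D$. By maximality of $D$ (as a dissociated subset of $B$), the set $D\cup\{b\}$ fails to be dissociated. Hence there exist coefficients $\varepsilon_d\in\{-1,0,1\}$ for $d\in D$ and $\varepsilon_b\in\{-1,0,1\}$, not all zero, with
\[
\varepsilon_b b + \sum_{d\in D}\varepsilon_d d = 0.
\]
The key observation is that we must have $\varepsilon_b\neq 0$: otherwise $\sum_{d\in D}\varepsilon_d d = 0$ would be a non-trivial signed relation among elements of $D$, contradicting dissociativity of $D$.

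Therefore $\varepsilon_b\in\{-1,1\}$, and rearranging gives
\[
b = \sum_{d\in D}(-\varepsilon_b\varepsilon_d)\, d,
\]
with each coefficient $-\varepsilon_b\varepsilon_d\in\{-1,0,1\}$. This exhibits $b$ as an element of $\Span(D)$, finishing the proof. There is no real obstacle here — the argument is a one-line contrapositive of maximality — the only thing to be careful about is keeping track of signs when $\varepsilon_b=-1$, which is handled uniformly by absorbing the sign $-\varepsilon_b$ into the coefficients.
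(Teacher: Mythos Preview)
Your proof is correct and follows exactly the same approach as the paper's: use maximality to find a non-trivial $\{-1,0,1\}$-relation in $D\cup\{b\}$, observe that the coefficient of $b$ must be nonzero, and rearrange. You have simply written out in full the details the paper compresses into a single sentence.
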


\begin{proof}
By maximality, for every element $b\in B\setminus D$, the set $\{b\}\cup D$ is not dissociated; rearranging a relation in $\{b\}\cup D$ with $\pm 1$-coefficients shows that $b$ lies in $\Span(D)$.
\end{proof}

We now show that, in cyclic groups $\mathbf{Z}/p\mathbf{Z}$ of prime order, sets with small additive dimension can be dilated to lie in a short interval around $0$. Here, we denote the dilated set by $\lambda\cdot B=\{\lambda b:b\in B\}$. The use of such so-called rectification arguments in additive combinatorics goes back at least to \cite{bilulevruzsa}.

\begin{lemma}\label{lem:small-dim-rectification}
If $B \subseteq \mathbf{Z}/p\mathbf{Z}$ is a subset of dimension $\dim(B)\leqslant k$, then there is some $\lambda \in (\mathbf{Z}/p\mathbf{Z})^\times$ such that the dilate $\lambda\cdot B$ is contained in the interval $[-kp^{1-1/k},kp^{1-1/k}]$.
\end{lemma}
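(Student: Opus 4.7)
The plan is to use a maximal dissociated subset together with a simultaneous Diophantine approximation argument. First I would let $D=\{d_1,\dots,d_m\}\subseteq B$ be a maximal dissociated subset. Since $\dim(B)\leqslant k$ we have $m\leqslant k$, and by Lemma \ref{lem:dissociated-set-generator} every $b\in B$ admits a representation
\[
b=\sum_{i=1}^m \varepsilon_i^{(b)} d_i,\qquad \varepsilon_i^{(b)}\in\{-1,0,1\}.
\]
Thus it suffices to find a single $\lambda\in(\mathbf{Z}/p\mathbf{Z})^\times$ for which each $\lambda d_i$ has a small representative in $\mathbf{Z}/p\mathbf{Z}$, since then $\lambda b$ is a $\pm 1$-sum of $m$ small numbers.

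The second step is to produce such a $\lambda$ via simultaneous Dirichlet approximation. Identify the $d_i$ with integers in $\{0,1,\dots,p-1\}$ and consider the $p$ points $(\{qd_1/p\},\dots,\{qd_m/p\})$ for $q=0,1,\dots,p-1$ in $(\mathbf{R}/\mathbf{Z})^m$. Partition the torus into $\lfloor p^{1/m}\rfloor^m\leqslant p$ cubes of side length $1/\lfloor p^{1/m}\rfloor\leqslant p^{-1/m}$; pigeonhole forces two of the $p$ points to land in a common cube, and subtracting them yields a nonzero $\lambda\in\{1,\dots,p-1\}$ with
\[
\|\lambda d_i/p\|\leqslant p^{-1/m}\quad\text{for all }i=1,\dots,m,
\]
where $\|\cdot\|$ denotes distance to the nearest integer. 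Equivalently, each $\lambda d_i$ has a centered representative $r_i\in\mathbf{Z}$ with $|r_i|\leqslant p^{1-1/m}$. Since $p$ is prime and $\lambda\in\{1,\dots,p-1\}$, we automatically have $\lambda\in(\mathbf{Z}/p\mathbf{Z})^\times$.

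Finally, for any $b\in B$ the triangle inequality gives
\[
\lambda b\equiv \sum_{i=1}^m \varepsilon_i^{(b)} r_i \pmod{p},\qquad \Bigl|\sum_{i=1}^m \varepsilon_i^{(b)} r_i\Bigr|\leqslant m\cdot p^{1-1/m}\leqslant k\cdot p^{1-1/k},
\]
using $m\leqslant k$ together with the monotonicity $p^{1-1/m}\leqslant p^{1-1/k}$. Hence $\lambda\cdot B\subseteq[-kp^{1-1/k},kp^{1-1/k}]$, as required.

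I don't anticipate any real obstacle: the combinatorial input (small dimension yields a small generating set via $\pm 1$-sums) is Lemma \ref{lem:dissociated-set-generator}, and the analytic input (a unit dilation sending $m$ given residues into a short window) is a clean application of Dirichlet's box principle. The only minor point of care is verifying that the pigeonhole produces $\lambda\neq 0\pmod p$, which is automatic since the two colliding points correspond to distinct values of $q$ in $\{0,\dots,p-1\}$; primality of $p$ then guarantees invertibility.
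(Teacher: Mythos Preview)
Your proposal is correct and takes essentially the same approach as the paper: both select a maximal dissociated subset $D$, apply a pigeonhole (simultaneous Dirichlet) argument to the points $(\lambda d/p)_{d\in D}$ in $\mathbf{T}^{|D|}$ to locate $\lambda\in(\mathbf{Z}/p\mathbf{Z})^\times$ with each $\lambda d$ in a short interval, and then invoke Lemma~\ref{lem:dissociated-set-generator} to pass from $D$ to all of $B$. One harmless slip: your inequality $1/\lfloor p^{1/m}\rfloor\leqslant p^{-1/m}$ is reversed (since $\lfloor p^{1/m}\rfloor\leqslant p^{1/m}$), but this is inconsequential and the paper's own box-counting is at the same informal level.
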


\begin{proof}
Let $D$ be a maximal dissociated subset of $B$, so that $|D|\leqslant k$. Consider the set
$$\{(\lambda d/p)_{d\in D}: \lambda \in \mathbf{Z}/p\mathbf{Z}\} \subseteq \mathbf{T}^{k}.$$
We divide $\mathbf{T}^{k}$ into boxes of the form $\prod_{i=1}^k[j_i/p^{1/k},(j_i+1)/p^{1/k})$ where the $j_i$ range over the integers in $[0,p^{1/k})$. The pigeonhole principle provides distinct $\lambda_1, \lambda_2 \in \mathbf{Z}/p\mathbf{Z}$ such that $\|\lambda_1 d/p-\lambda_2 d/p\|_{\mathbf{T}} \leqslant p^{-1/k}$ for all $d\in D$.  Take $\lambda\vcentcolon=\lambda_1-\lambda_2 \in (\mathbf{Z}/p\mathbf{Z})^\times$, so that $\lambda d \in [-p^{1-1/k},p^{1-1/k}]$ for all $d\in D$.  Since $B\subseteq \operatorname{span}(D)$ by Lemma~\ref{lem:dissociated-set-generator}, we have that
$\lambda \cdot B \subseteq [-kp^{1-1/k},kp^{1-1/k}].$
\end{proof}
The next lemma provides a procedure for finding $F_\ell$-isomorphic copies of a set.
\begin{lemma}\label{lem:dilating}
    Let $B\subset\mathbf{Z}$ satisfy $B\subset(-p/\ell,p/\ell)$ where $p$ is a prime. Denote by $\pi:(-p/2,p/2)\to\mathbf{Z}/p\mathbf{Z}$ the projection map. If $\lambda\in(\mathbf{Z}/p\mathbf{Z})^\times$ has that $\lambda\cdot \pi(B)\subset(-p/\ell,p/\ell)\subset\mathbf{Z}/p\mathbf{Z}$, then $B$ is $F_\ell$-isomorphic to $\pi^{-1}(\lambda\cdot\pi(B))\subset(-p/\ell,p/\ell)\subset\mathbf{Z}$.
\end{lemma}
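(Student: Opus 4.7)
The plan is to exhibit the explicit map $\phi: B \to \pi^{-1}(\lambda \cdot \pi(B))$ defined by $\phi(b) = \pi^{-1}(\lambda \cdot \pi(b))$, where the inverse $\pi^{-1}$ picks the unique lift to the interval $(-p/2, p/2)$. Note that by construction $\phi(b) \equiv \lambda b \pmod{p}$ and $\phi(b) \in (-p/\ell, p/\ell)$, by our hypothesis on $\lambda$. I then need to check that $\phi$ is a bijection and that it preserves linear relations of length $\ell$ in both directions.

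For the bijection claim, I would first note that $\pi$ is injective on $B$ because $B \subset (-p/\ell, p/\ell) \subset (-p/2, p/2)$ and $\pi$ is injective on $(-p/2, p/2)$; the same reasoning shows $\pi^{-1}$ is a well-defined injection on its domain. Since $\lambda \in (\mathbf{Z}/p\mathbf{Z})^\times$, multiplication by $\lambda$ is a bijection on $\mathbf{Z}/p\mathbf{Z}$, so composing these three bijections shows that $\phi$ is a bijection onto $\pi^{-1}(\lambda \cdot \pi(B))$.

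For the $F_\ell$-homomorphism property, suppose $b_1,\dots,b_\ell \in B$ and $\varepsilon_i \in \{-1,0,1\}$ satisfy $\sum_{i=1}^\ell \varepsilon_i b_i = 0$ in $\mathbf{Z}$. Reducing modulo $p$ and multiplying by $\lambda$, we obtain
\begin{equation*}
\sum_{i=1}^\ell \varepsilon_i \phi(b_i) \equiv \lambda \sum_{i=1}^\ell \varepsilon_i b_i \equiv 0 \pmod{p}.
\end{equation*}
On the other hand, each $\phi(b_i)$ lies in $(-p/\ell, p/\ell)$, so the triangle inequality gives $\bigl| \sum_{i=1}^\ell \varepsilon_i \phi(b_i) \bigr| < \ell \cdot p/\ell = p$. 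The only integer in $(-p, p)$ divisible by $p$ is $0$, so the sum equals $0$ exactly in $\mathbf{Z}$.

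The reverse direction is symmetric: if $\sum_{i=1}^\ell \varepsilon_i \phi(b_i) = 0$, then reducing mod $p$ and multiplying by $\lambda^{-1}$ yields $\sum_{i=1}^\ell \varepsilon_i b_i \equiv 0 \pmod{p}$, and the bound $|\sum_i \varepsilon_i b_i| < p$ (using $b_i \in (-p/\ell, p/\ell)$) again forces equality in $\mathbf{Z}$. Thus $\phi^{-1}$ is also an $F_\ell$-homomorphism, completing the proof. There is no serious obstacle here; the lemma is a clean rectification argument whose crux is simply that the size constraint $B, \phi(B) \subset (-p/\ell, p/\ell)$ is exactly tight enough to lift modular relations of length $\ell$ to integer relations.
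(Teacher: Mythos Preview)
Your proof is correct and follows essentially the same approach as the paper: both arguments rest on the observation that for integers $x_j\in(-p/\ell,p/\ell)$ and $\varepsilon_j\in\{-1,0,1\}$, the conditions $\sum_{j}\varepsilon_jx_j=0$ and $\sum_{j}\varepsilon_jx_j\equiv 0\pmod p$ are equivalent, together with the trivial fact that dilation by a unit in $\mathbf{Z}/p\mathbf{Z}$ preserves all additive relations. The paper phrases this as a chain of three $F_\ell$-isomorphisms $B\to\pi(B)\to\lambda\cdot\pi(B)\to\pi^{-1}(\lambda\cdot\pi(B))$, while you verify the composite map directly, but the content is the same.
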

\begin{proof}
    $B$ is $F_\ell$-isomorphic to $\pi(B)$ precisely because the conditions $\sum_{j=1}^\ell \varepsilon_jx_j=0$ and $\sum_{j=1}^\ell \varepsilon_jx_j\equiv 0 \pmod p$ are equivalent for integers $x_j\in (-p/\ell,p/\ell)$ and $\varepsilon_j\in\{-1,0,1\}$. Furthermore, it is trivial that $C$ and any dilate $\lambda\cdot C$ are $F_\ell$-isomorphic for any $C\subset\mathbf{Z}/p\mathbf{Z}$ and $\lambda\in(\mathbf{Z}/p\mathbf{Z})^\times$.
\end{proof}
\begin{proof}[Proof of Theorem \ref{th:densemodel}]
    Suppose that $A\subset \mathbf{N}$ is a set such that any $F_\ell$-isomorphic set $A'\subset \mathbf{Z}$ satisfies $\dim(A')\leqslant k$. We further suppose that $A'\subset \mathbf{Z}$ has that $m=m(A')\vcentcolon=\max_{x\in A'}|x|$ is minimal over all sets $A'$ which are $F_\ell$-isomorphic to $A$. We may find a prime $p\in (\ell m,2\ell m]$ and by Lemma \ref{lem:dilating}, $A'$ is $F_\ell$-isomorphic to $\pi(A')\subset\mathbf{Z}/p\mathbf{Z}$. It is trivial that $\dim(\pi(A'))\leqslant\dim(A')\leqslant k$ (note that the first notion of additive dimension is taken in $\mathbf{Z}/p\mathbf{Z}$ while the second is in $\mathbf{Z}$). If it were the case that $kp^{1-1/k}<m$, then $kp^{1-1/k}<p/\ell$ so by combining Lemmas \ref{lem:small-dim-rectification} and \ref{lem:dilating} we would obtain a set $A''$ which is $F_\ell$-isomorphic to $A$ and moreover has
    \begin{align*}
        m(A'')\leqslant kp^{1-1/k}<m.
    \end{align*}
    These properties contradict our choice of $A'$ so we must have that
    $k(2\ell m)^{1-1/k}\geqslant kp^{1-1/k}\geqslant m$. We deduce that $m\ll (2\ell k)^k$ as desired.
\end{proof}
In the rest of this section, we study the structure of $F_A$ for sets $A$ with $S(A)\leqslant N/3+C$ which are contained in some `short' interval. By Corollary \ref{cor:densemodel}, we may indeed consider sets $A\subset[-T,T]$ where $T\leqslant e^{(\log N)^5}$ from now on, as all sets $A'$ which do not have such a `dense' $F_4$-model must satisfy $S(A')\geqslant N/3+c(\log N)^{1/4}$. We will use the following lemma which one can think of as providing good bounds on the size of the Fourier coefficients of $R_Q$, which we defined in Proposition \ref{prop:basicestimate} as
\begin{align*}
        R_Q(x)=\sum_{\substack{1<n\in \mathcal{R}_Q\\ a\in A}}\frac{\chi(n)}{n}c( nax).
    \end{align*}
We need to prove such a result in a somewhat more general setting.
\begin{lemma}\label{lem:coefbound1}
    Let $B_s\subset \mathbf{Z}\setminus\{0\}$, $s\in\mathcal{S}$, be a collection of disjoint sets. Let $Q>1$ and $T\leqslant e^{Q}$ be parameters and let $k$ be a non-zero integer. Define for each $s\in\mathcal{S}$ the function
    $$R_{B_s}(x)=\sum_{b\in B_s}\sum_{1<n\in\mathcal{R}_Q}\frac{\alpha(n,s)}{n}e(nkbx)$$
    where the $\alpha(n,s)$ are $1$-bounded complex numbers, and where $\mathcal{R}_Q$ is the set of $Q$-rough numbers.
    Then
    $R(x)\vcentcolon=\sum_sR_{B_s}(x)$ satisfies the bound $|\hat{R}(m)|\ll \frac{\log T}{Q}$ for all its Fourier coefficients with frequencies $m\in[-10T,10T]$.
\end{lemma}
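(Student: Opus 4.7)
The plan is to compute the Fourier coefficients of $R$ directly and bound the resulting divisor sum by an Euler product. Expanding the definition of $R$ and using orthogonality of the characters $e(\ell\cdot)$ on $\mathbf{T}$, for any $m\in\mathbf{Z}$ we have
\begin{equation*}
\hat R(m) = \sum_{s\in\mathcal{S}}\sum_{b\in B_s}\sum_{\substack{n>1,\,n\in\mathcal{R}_Q\\ nkb=m}}\frac{\alpha(n,s)}{n}.
\end{equation*}
No term contributes when $m=0$, since $n\geqslant 2$ and $k,b$ are nonzero, so we may assume $m\neq 0$. Invoking $|\alpha(n,s)|\leqslant 1$ together with the disjointness of the $B_s$ (every $b\in B:=\bigcup_s B_s$ lies in exactly one $B_s$), I would bound
\begin{equation*}
|\hat R(m)| \leqslant \sum_{b\in B}\sum_{\substack{n>1,\,n\in\mathcal{R}_Q\\ nkb=m}}\frac{1}{n} \leqslant \sum_{\substack{n\mid m,\;n>1\\ n\in\mathcal{R}_Q}}\frac{1}{n},
\end{equation*}
where in the last step I reparameterise by $n$: the equation $nkb=m$ forces $n\mid m$, and for each such $n$ there is at most one $b$, namely $b=m/(kn)$, so dropping the requirement that this $b$ actually lies in $B$ only enlarges the sum.

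To estimate the divisor sum, I use that every $n\geqslant 1$ in $\mathcal{R}_Q$ factors into primes $>Q$, so by multiplicativity of $1/n$,
\begin{equation*}
\sum_{\substack{n\mid m\\ n\in\mathcal{R}_Q}}\frac{1}{n} = \prod_{\substack{p\mid m\\ p>Q}}\sum_{j=0}^{v_p(m)}p^{-j} \leqslant \prod_{\substack{p\mid m\\ p>Q}}\frac{1}{1-1/p} \leqslant \exp\!\left(2\sum_{\substack{p\mid m\\ p>Q}}\frac{1}{p}\right),
\end{equation*}
using the elementary bound $-\log(1-x)\leqslant 2x$ on $[0,1/2]$. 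Since each such $p$ exceeds $Q$ and $|m|\leqslant 10T$ has at most $\log|m|/\log Q\leqslant \log(10T)/\log Q$ distinct prime divisors above $Q$, the exponent is at most $2\log(10T)/(Q\log Q)$.

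Finally, the hypothesis $T\leqslant e^{Q}$ makes this exponent $O(1)$, so the inequality $e^y-1\ll y$ applies; subtracting the $n=1$ contribution then yields $|\hat R(m)|\ll \log(10T)/(Q\log Q)\ll \log T/Q$, as desired. The argument is essentially direct and I do not foresee a serious obstacle; the only modest subtlety is the bookkeeping needed to pass from a triple sum over $(s,b,n)$ to a single sum over divisors of $m$, which is handled cleanly by the disjointness of the $B_s$, and the hypothesis $T\leqslant e^{Q}$ is exactly what is needed to keep the exponent in the final Euler product bounded.
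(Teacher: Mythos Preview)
Your proof is correct and follows essentially the same approach as the paper: compute $\hat R(m)$ directly, use disjointness of the $B_s$ to reduce to the divisor sum $\sum_{1<n\mid m,\,n\in\mathcal{R}_Q}1/n$, bound this via the Euler product over primes $p>Q$ dividing $m$, and finish with $e^y-1\ll y$ using $T\leqslant e^Q$. The only cosmetic difference is that you count prime divisors above $Q$ by $\log|m|/\log Q$ rather than the paper's cruder $\omega(|m|)\ll\log T$, gaining an irrelevant extra factor of $\log Q$ in the denominator.
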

\begin{proof}
This can be proved as follows: \begin{align*}
    |\hat{R}(m)|&\leqslant \sum_s\sum_{b\in B_s}\sum_{\substack{1<n\in\mathcal{R}_Q\\\exists b\in B_s \text{ with }nkb=  m}}\frac{1}{n}\\
    &\leqslant \sum_{\substack{1<n\in\mathcal{R}_Q\\ n| m}}\frac{1}{n}
\end{align*}
which one can check by noting that each divisor $n\in\mathcal{R}_Q$ of $m$ can contribute at most once since the sets $B_s$ are disjoint and the relation $nkb=m$ uniquely determines $b$ (if it exists). As $\mathcal{R}_Q$ is the set of $Q$-rough numbers, we can bound this by
\begin{align*}
    |\hat{R}(m)|&\ll -1+\prod_{Q<p \text{ prime divisor of $|m|$}}\left(1+\frac{1}{p}+\frac{1}{p^2}+\dots\right)\\
    &\ll -1+\left(1+2Q^{-1}\right)^{\omega(|m|)},
\end{align*}
where $\omega$ counts the number of distinct prime divisors. Using the trivial bound $\omega(|m|)\ll \log T\leqslant Q$ for all $m\in[-10T,10T]$ and the basic inequality $(1+x)^y-1\leqslant e^{xy}-1\ll xy$ which is valid for $x>0$ and all $0\leqslant y \ll 1/x$, we obtain the claimed bound $|\hat{R}(m)|\ll \omega(|m|)Q^{-1}\ll (\log T)Q^{-1}$.
\end{proof}

We show that such a result can be used to deduce strong $L^2$-bounds for functions of the type discussed below. For technical reasons that will become clear later, we will in practice bound the $L^2$-norm of a certain truncation of their Fourier series, and to obtain such truncations we recall the definition of de la Vall\'ee-Poussin kernels.
    \begin{definition}
        \normalfont We define the \emph{de la Vall\'ee-Poussin kernel}
$$V_{T}(x)=\sum_{n=-2T}^{-T-1}\left(1-\frac{|2T+n|}{T}\right)e(nx)+\sum_{n=-T}^Te(nx)+\sum_{n=T+1}^{2T}\left(1-\frac{|2T-n|}{T}\right)e(nx).$$ 
    \end{definition}
The de la Vall\'ee-Poussin kernels clearly have the important basic property that $\hat{V}_{T}(n)=1$ for $|n|\leqslant T$ while $\hat{V}_{T}(n)=0$ for $|n|\geqslant 2T$.
The following rather technical looking lemma provides properties that will be crucial in two later stages of this paper. We also emphasise that the variable $p$ runs over primes only.
\begin{lemma}\label{lem:generalcont}
    Let $1\leqslant Q_1<Q$ and $T\leqslant e^{Q}$ be parameters, let $(\nu_p)_{p\leqslant Q_1}\in\mathbf{N}^{\pi(Q_1)}$ and let $r\in \prod_{p\leqslant Q_1}(\mathbf{Z}/p\mathbf{Z})^\times$. Let $B_s\subset \mathbf{Z}\setminus\{0\}$, $s\in\prod_{p\leqslant Q_1}(\mathbf{Z}/p\mathbf{Z})^\times$, be a collection of sets of size at most $K$. Assume that $k$ is a non-zero integer such that for each $s$ and each $b\in B_s$ the congruence $bk\equiv s\prod_{p\leqslant Q_1}p^{\nu_p}\mdlem{\prod_{p\leqslant Q_1}p^{\nu_p+1}}$ holds. Define
    $$B_s(x)=\sum_{b\in B_s}\sum_{\substack{n\in\mathcal{R}_Q\\ n\equiv rs^{-1}\mdsublem{\prod_{p\leqslant Q_1} p}}}\frac{\chi(n)}{n}e(nkbx).$$
    Then
    $$\sum_sB_s(x)=\sum_{b\in B_r}e(bkx)+E(x)$$
        for some function $E:\mathbf{T}\to\mathbf{C}$ which satisfies $\lVert E*V_{T}\rVert_2\ll \frac{\log T}{Q^{1/2}}K^{1/2}$.
\end{lemma}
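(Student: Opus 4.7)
The plan is to peel off the main term by identifying the only triple that produces $n=1$, and then to bound the remaining error by marrying the pointwise Fourier coefficient bound supplied by Lemma \ref{lem:coefbound1} with a straightforward $\ell^1$-counting estimate. The congruence $n \equiv rs^{-1} \md{\prod_{p\leqslant Q_1} p}$ with $n=1$ forces $s=r$, since $r,s$ are both units; hence the $n=1$ contribution to $\sum_s B_s(x)$ equals exactly $\sum_{b\in B_r} e(kbx)$. Defining $E(x)$ to be the difference, I obtain
\begin{equation*}
E(x) = \sum_{s} \sum_{b\in B_s} \sum_{\substack{1<n\in\mathcal R_Q \\ n \equiv rs^{-1}\mdsub{\prod_{p\leqslant Q_1} p}}} \frac{\chi(n)}{n}\, e(nkbx).
\end{equation*}

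The key observation is that this $E$ fits directly into the framework of Lemma \ref{lem:coefbound1}: the sets $B_s$ are disjoint because the assumption $bk \equiv s\prod_{p\leqslant Q_1} p^{\nu_p} \md{\prod_{p\leqslant Q_1} p^{\nu_p+1}}$ uniquely determines $s$ from $b$, and the coefficients $\alpha(n,s) = \chi(n)\mathbf{1}[n\equiv rs^{-1} \md{\prod_{p\leqslant Q_1} p}]$ are $1$-bounded. Lemma \ref{lem:coefbound1} therefore gives the pointwise estimate $|\hat E(m)| \ll (\log T)/Q$ for every $m\in[-10T,10T]$.

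To promote this into the required $L^2$-bound on $E*V_T$, I would pair it with an $\ell^1$-estimate on $\hat E$ restricted to the Fourier support of $V_T$. Since $|\hat V_T|\leqslant 1$ and $\hat V_T$ vanishes outside $[-2T,2T]$, Parseval yields
\begin{equation*}
\lVert E * V_T\rVert_2^2 = \sum_m |\hat E(m)|^2 |\hat V_T(m)|^2 \leqslant \max_{|m|\leqslant 2T}|\hat E(m)| \cdot \sum_{|m|\leqslant 2T}|\hat E(m)|.
\end{equation*}
The first factor is $\ll (\log T)/Q$ by the previous step. For the second, I would estimate $|\hat E(m)|$ by the sum of $1/n$ over the contributing triples $(n,s,b)$ with $nkb=m$, swap the order of summation, and observe that for each $Q$-rough $n>1$ there is a unique $s = rn^{-1}\md{\prod_{p\leqslant Q_1} p}$ with $n \equiv rs^{-1}$ (valid since $n$ is coprime to $\prod_{p\leqslant Q_1} p$), and that $|b|\geqslant 1$ forces $n\leqslant 2T/|k|$. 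This yields
\begin{equation*}
\sum_{|m|\leqslant 2T} |\hat E(m)| \leqslant K \sum_{1<n\leqslant 2T/|k|} \frac{1}{n} \ll K\log T,
\end{equation*}
and multiplying the two bounds gives $\lVert E*V_T\rVert_2^2 \ll K(\log T)^2/Q$, as required.

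The main obstacle is that a naive $L^2$-triangle inequality applied to the decomposition $E = \sum_{1<n\in\mathcal R_Q}(\chi(n)/n) g_n$, where $g_n(x) = \sum_{b\in B_{rn^{-1}}} e(nkbx)$, would bound $\lVert g_n*V_T\rVert_2 \leqslant K^{1/2}$ and sum $\sum_{1<n\leqslant 2T} 1/n \asymp \log T$ outside the norm, producing only $\lVert E*V_T\rVert_2 \ll K^{1/2}\log T$ and losing the crucial $Q^{-1/2}$ saving. The point is that Lemma \ref{lem:coefbound1} already packages the cancellation between different $n$-terms at the level of a single Fourier coefficient; combining this pointwise bound with the trivial $\ell^1$-counting bound through $\lVert\cdot\rVert_{\ell^2}^2 \leqslant \lVert\cdot\rVert_{\ell^\infty}\lVert\cdot\rVert_{\ell^1}$ is exactly what converts the $Q^{-1}$ loss into a $Q^{-1/2}$ saving.
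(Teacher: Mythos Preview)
Your proposal is correct and follows essentially the same approach as the paper: extract the $n=1$ term to get the main contribution from $B_r$, apply Lemma~\ref{lem:coefbound1} for the $\ell^\infty$ bound on $\hat E$, and combine it via $\lVert\cdot\rVert_{\ell^2}^2\leqslant\lVert\cdot\rVert_{\ell^\infty}\lVert\cdot\rVert_{\ell^1}$ with the trivial $\ell^1$ bound $\sum_{|m|\leqslant 2T}|\hat E(m)|\ll K\log T$. The only cosmetic difference is that you organise the $\ell^1$ count by first fixing $n$ and noting it determines a unique $s$, whereas the paper sums over $s$ first and then over $n$ in each congruence class; these are the same estimate.
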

\begin{proof}
Recall that $\mathcal{R}_Q$ is the set of $Q$-rough numbers. Trivially, the only value of $s\in\prod_{p\leqslant Q_1}(\mathbf{Z}/p\mathbf{Z})^\times$ such that $n=1$ satisfies $n\equiv rs^{-1}\md{\prod_{p\leqslant Q_1}p}$ is $s=r$. Hence, if we write
\begin{equation}\label{eq:Edefi}
    E(x)=\sum_s\sum_{b\in B_s}\sum_{\substack{1<n\in\mathcal{R}_Q \\ n\equiv rs^{-1}\mdsub{\prod p}}} \frac{\chi(n)}{n}e(nbkx),
\end{equation}
then
$\sum_s B_s(x)=\sum_{b\in B_r}e(bkx)++ E(x)$.
It remains to show that $\lVert E*V_{T}\rVert_2\ll (\log T)Q^{-1/2}K^{1/2}$. Note that $E(x)$ is precisely of the form that we studied in the previous lemma: the congruence condition implies that the sets $B_s$ are pairwise disjoint and we take $\alpha(n,s)=1_{n\equiv rs^{-1}\mdsub{\prod p}}\chi(n)$. Hence, $\max_{|m|\leqslant 10T}|\hat{E}(m)|\ll \frac{\log T}{Q}$. Observe that $E*V_{T}$ is a trigonometric polynomial of degree at most $2T$ as $(E*V_T)^\wedge(n)=\hat{E}(n)\hat{V}_{T}(n)$, and note that $|\hat{V}_{T}(n)|\leqslant 1$, so by Parseval we can bound
\begin{align*}
    \lVert E*V_{T}\rVert_2^2&=\sum_{n=-2T}^{2T}|\hat{E}(n)|^2|\hat{V}_{T}(n)|^2\leqslant \sum_{n=-2T}^{2T}|\hat{E}(n)|^2\\
    &\leqslant \max_{|n|\leqslant 2T}|\hat{E}(n)|\sum_{n=-2T}^{2T}|\hat{E}(n)|\ll \frac{\log T}{Q}\sum_{n=-2T}^{2T}|\hat{E}(n)|,
\end{align*}
so it suffices to show that $\sum_{|n|\leqslant 2T}|\hat{E}(n)|\ll K\log T$. This follows from the explicit form \eqref{eq:Edefi} of $E$ as $\chi$ is $1$-bounded: 
\begin{align*}
    \sum_{m=-2T}^{2T}|\hat{E}(m)|&\leqslant \sum_{s}\sum_{b\in B_s}\sum_{\substack{1<n\in\mathcal{R}_Q\\ n\equiv rs^{-1}\mdsub{\prod p}}}\frac{1_{|nkb|\leqslant 2T}}{n}\\
    &\leqslant \sum_s |B_s|\sum_{\substack{
    n\in\mathcal{R}_Q\\\ n\equiv rs^{-1}\mdsub{\prod p}}}\frac{1_{n\leqslant 2T}}{n}\\
    &\leqslant K\sum_{n\leqslant 2T}\frac{1}{n}\ll K\log T,
\end{align*}
where we used that $\max_s|B_s|\leqslant K$.
\end{proof}

\section{The distribution of $A$ modulo small primes
}We study the distribution of the set $A$ in residue classes modulo powers of small primes under the assumption that $S(A)\leqslant N/3+C$. Let $Q_1$ be a parameter. We define for $r\in \prod_{p\leqslant Q_1}\left(\mathbf{Z}/p\mathbf{Z}\right)^\times$ and $(\nu_p)_{p\leqslant Q_1}\in\mathbf{N}^{\pi(Q_1)}$ the following subset of $A$:
\begin{equation*}
    A(r,(\nu_p))=A\cap\left\{n\in\mathbf{Z}: n\equiv r\prod_{p\leqslant Q_1}p^{\nu_p} \md {\prod_{p\leqslant Q_1}p^{\nu_p+1}}\right\}.
\end{equation*}
We emphasise that the variable $p$ always runs over primes only and that, for our purposes, $\mathbf{N}$ contains 0.
We first show, using the assumption that $\dim(A)$ is small, that a large portion of $A$ must be contained in a single such set $A(r,(\nu_p))$.
\begin{lemma}\label{lem:largeresidueclass}
    Let $A\subset\mathbf{Z}$ be a set of $N$ integers. Then there exist $r\in \prod_{p\leqslant Q_1}\left(\mathbf{Z}/p\mathbf{Z}\right)^\times$ and $(\nu_p)_{p\leqslant Q_1}\in\mathbf{N}^{\pi(Q_1)}$ such that $$|A(r,(\nu_p))|\geqslant \frac{N}{(\dim(A))^{\pi(Q_1)}\prod_{p\leqslant Q_1}p}.$$
\end{lemma}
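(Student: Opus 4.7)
The plan is to count the number of nonempty classes $A(r,(\nu_p))$ and apply pigeonhole. Since $0\notin A$, every $a\in A$ has a well-defined $p$-adic valuation $\nu_p(a)\in\mathbf{N}$ for each prime $p\leqslant Q_1$, and the residue $r_p\vcentcolon=(a/p^{\nu_p(a)})\bmod p$ belongs to $(\mathbf{Z}/p\mathbf{Z})^\times$. Unpacking the definition, $a\in A(r,(\nu_p))$ if and only if $\nu_p(a)=\nu_p$ and $a/p^{\nu_p}\equiv r_p\pmod{p}$ for every $p\leqslant Q_1$; hence the classes partition $A$ and it suffices to bound the number of tuples $((\nu_p),r)$ that actually arise as $a$ ranges over $A$.

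The key step is to show that for each fixed prime $p$, the set of distinct values $\{\nu_p(a):a\in A\}$ has size at most $\dim(A)$. I would prove this by picking one representative $a_i\in A$ for each distinct value of $\nu_p$ and checking that $\{a_i\}$ is dissociated. Suppose $\sum_i\varepsilon_ia_i=0$ with $\varepsilon_i\in\{-1,0,1\}$ not all zero, and let $i_0$ index a nonzero $\varepsilon_{i_0}$ for which $\nu_p(a_{i_0})$ is minimal. Then $p^{\nu_p(a_{i_0})}$ exactly divides $\varepsilon_{i_0}a_{i_0}$, while it divides every other nonzero term $\varepsilon_ia_i$ to strictly higher order; the whole sum therefore has $p$-adic valuation exactly $\nu_p(a_{i_0})<\infty$, contradicting that it equals $0$.

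Given this observation, for each prime $p\leqslant Q_1$ the coordinate $\nu_p$ contributes at most $\dim(A)$ options and the coordinate $r_p$ contributes at most $p-1$ options, so the number of nonempty classes is at most $(\dim(A))^{\pi(Q_1)}\prod_{p\leqslant Q_1}(p-1)\leqslant (\dim(A))^{\pi(Q_1)}\prod_{p\leqslant Q_1}p$. Pigeonhole then yields a class $A(r,(\nu_p))$ with $|A(r,(\nu_p))|\geqslant N/((\dim(A))^{\pi(Q_1)}\prod_{p\leqslant Q_1}p)$, as desired. The only substantive step is the dissociativity claim, which is the standard $p$-adic-valuation trick; the remaining counting is routine bookkeeping, so I do not anticipate any serious obstacle.
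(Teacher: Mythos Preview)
Your proof is correct and follows essentially the same approach as the paper: both hinge on the observation that elements of $A$ with pairwise distinct $p$-adic valuations form a dissociated set, so $|\{\nu_p(a):a\in A\}|\leqslant\dim(A)$, and then apply pigeonhole. The only cosmetic difference is that the paper pigeonholes in two stages (first on $(\nu_p)$, then on $r$) whereas you count all nonempty classes at once; the content is identical.
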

\begin{proof}
    Let $p$ be a prime, and for an integer $n$ let $\nu_p(n)$ be the exact power of $p$ dividing $n$. We claim that $\{\nu_p(a):a\in A\}$ has size at most $\dim(A)$. Indeed, if $\nu_p(a_1)<\nu_p(a_2)<\dots<\nu_p(a_k)$, then considering a possible relation $\sum_{j=1}^k\varepsilon_ja_j=0$ with $\varepsilon_j\in\{-1,0,1\}$ modulo $p^{\nu_p(a_i)+1}$ for all $i$ shows that each $\varepsilon_i=0$, thus implying that $\{a_1,a_2,\dots,a_k\}$ is dissociated. Hence, the image of the map
    $$\rho:A\to\mathbf{N}^{\pi(Q_1)}:a\mapsto(\nu_p(a))_{p\leqslant Q_1}$$
    has size at most $(\dim(A))^{\pi(Q_1)}$ and it follows that there exists a $(\nu_p)_{p\leqslant Q_1}$ such that its preimage $$\rho^{-1}((\nu_p))=\bigcup_{r\in \prod_{p\leqslant Q_1}\left(\mathbf{Z}/p\mathbf{Z}\right)^\times}A(r,(\nu_p))$$
    has size at least $N(\dim(A))^{-\pi(Q_1)}$. Clearly, there then exists an $r\in \prod_{p\leqslant Q_1}\left(\mathbf{Z}/p\mathbf{Z}\right)^\times$ satisfying the conclusion of the lemma.
\end{proof}
This implies the following structure for sets $A$ for which $S(A)$ is small. The exact choice of the parameter $Q_1$ is not important here, or in the rest of the paper (any choice $Q_1=(\log N)^c$ for $c\in(0,1)$ will do). For clarity, we shall from now on always take $Q_1=(\log N)^{1/2}$.
\begin{cor}\label{cor:largeA(s,mu)}
    Let $Q_1=(\log N)^{1/2}$. Let $A\subset\mathbf{Z}\setminus\{0\}$ be a set of size $N$. Then either $S(A)\geqslant N/3+c(\log N)^{1/2}$ or else the following holds. There exist $r\in \prod_{p\leqslant Q_1}\left(\mathbf{Z}/p\mathbf{Z}\right)^\times$ and $(\nu_p)_{p\leqslant Q_1}\in\mathbf{N}^{\pi(Q_1)}$ such that $|A(r,(\nu_p))|\geqslant_\varepsilon N^{1-\varepsilon}$.
\end{cor}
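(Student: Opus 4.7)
The plan is to derive this as a direct dichotomy: either we are already in the good case $S(A) \geqslant N/3 + c(\log N)^{1/2}$, or else the dimension bound from Corollary \ref{cor:sum-freesmalldim} kicks in and makes the denominator in Lemma \ref{lem:largeresidueclass} negligible compared to $N^\varepsilon$.

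Concretely, I would suppose that $S(A) < N/3 + c(\log N)^{1/2}$ and apply Corollary \ref{cor:sum-freesmalldim} with $C = c(\log N)^{1/2}$ to conclude that $\dim(A) \ll C^2 (\log N)^3 \ll (\log N)^4$. Now invoke Lemma \ref{lem:largeresidueclass} with the fixed choice $Q_1 = (\log N)^{1/2}$; this yields $r$ and $(\nu_p)$ with
\begin{equation*}
|A(r,(\nu_p))| \;\geqslant\; \frac{N}{(\dim(A))^{\pi(Q_1)} \prod_{p \leqslant Q_1} p}.
\end{equation*}
It then remains to show that the denominator on the right is smaller than $N^{\varepsilon}$ for any fixed $\varepsilon > 0$ and $N$ sufficiently large.

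For this I would use the prime number theorem bounds $\pi(Q_1) \ll Q_1/\log Q_1 \ll (\log N)^{1/2}/\log\log N$ together with Chebyshev's estimate $\prod_{p \leqslant Q_1} p \leqslant e^{O(Q_1)} = e^{O((\log N)^{1/2})}$. Combining these with $\dim(A) \ll (\log N)^4$, we get
\begin{equation*}
(\dim(A))^{\pi(Q_1)} \;\leqslant\; \exp\!\bigl(O(\log\log N) \cdot \pi(Q_1)\bigr) \;\leqslant\; \exp\!\bigl(O((\log N)^{1/2})\bigr),
\end{equation*}
and so the total denominator is at most $\exp(O((\log N)^{1/2}))$. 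Since $(\log N)^{1/2} = o(\varepsilon \log N)$ as $N \to \infty$, this quantity is at most $N^\varepsilon$ for any fixed $\varepsilon > 0$ once $N$ is large enough, giving $|A(r,(\nu_p))| \geqslant_\varepsilon N^{1-\varepsilon}$ as required.

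There is no real obstacle here — this is essentially just a bookkeeping step that feeds the sum-free $\to$ small-dimension implication of Corollary \ref{cor:sum-freesmalldim} into the pigeonholing of Lemma \ref{lem:largeresidueclass}. The only mild subtlety is choosing $Q_1$ small enough that $(\dim(A))^{\pi(Q_1)}$ does not blow past $N^\varepsilon$; the choice $Q_1 = (\log N)^{1/2}$ is comfortably within the range that makes the two pieces balance, and in fact any $Q_1 = (\log N)^c$ with $c \in (0,1)$ would work equally well, as noted in the remark preceding the statement.
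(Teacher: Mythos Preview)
Your proposal is correct and follows exactly the same route as the paper: invoke Corollary~\ref{cor:sum-freesmalldim} to get $\dim(A)\ll(\log N)^4$ in the non-trivial case, feed this into Lemma~\ref{lem:largeresidueclass}, and then verify that $(\dim(A))^{\pi(Q_1)}\prod_{p\leqslant Q_1}p\leqslant e^{O((\log N)^{1/2})}\ll_\varepsilon N^\varepsilon$. Your observation that any $Q_1=(\log N)^c$ with $c\in(0,1)$ would work is also exactly what the paper remarks.
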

\begin{proof}
    Corollary \ref{cor:sum-freesmalldim} implies that either $S(A)\geqslant N/3+c(\log N)^{1/2}$ so that we are done, or else that $\dim(A)\ll (\log N)^4$. One can now simply use Lemma \ref{lem:largeresidueclass} and calculate that $(\dim(A))^{\pi(Q_1)}\prod_{p\leqslant Q_1}p\leqslant e^{O((\log N)^{1/2})}\ll_\varepsilon N^\varepsilon$ when $Q_1=(\log N)^{1/2}$.
\end{proof}
The main result of this section is Proposition \ref{prop:distmodpstruct} which shows that the collection of sizes of the sets $A(r,(\nu_p))$ exhibits strong structure when $S(A)$ is small. The statement and proof of Proposition \ref{prop:distmodpstruct} are somewhat technical so we discuss the following model setting first. Its proof contains many of the main ideas but allows us to ignore error term contributions that show up in the general case.
\begin{proposition}[Model setting]\label{prop:distmodel1}
    Let $A\subset\mathbf{Z}\setminus\{0\}$ and assume that $A\subset [-T,T]$ where $T\ll e^{O((\log N)^5)}$. Let $Q_1=(\log N)^{1/2}$ and suppose that $$\max_{r\in\prod_{p\leqslant Q_1}(\mathbf{Z}/p\mathbf{Z})^\times}|A(r,(0))|\geqslant (\log N)^4,$$ where $(0)=(0)_{p\leqslant Q_1}$. Then $\lVert F_A\rVert_1\gg \log\log N$, and in particular $S(A)\geqslant N/3+c\log\log N$.
\end{proposition}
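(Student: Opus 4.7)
The plan is to use the M-P-S test-function construction of Lemma \ref{lem:basicMPS} to produce $\Phi\in L^\infty(\mathbf{T})$ with $\|\Phi\|_\infty\leq 10$ and $\langle F_A,\Phi\rangle\gg\log\log N$; by duality this gives $\|F_A\|_1\gg\log\log N$, and Proposition \ref{prop:basicestimate}(i)-(ii) then yields the stated bound on $S(A)$. Writing $A_r:=A(r,(0))$ for the large residue class, so that $|A_r|\geq(\log N)^4$ and every element of $A_r$ is coprime to $\prod_{p\leq Q_1}p$, for each prime $3<p\leq Q_1$ I define the dilate $X_p:=\{pb:b\in A_r\}\subset\mathbf{Z}$; these sets are pairwise disjoint of common size $|A_r|$. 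Applying Lemma \ref{lem:basicMPS} with $f=\hat F_A$ and the family $(X_p)_p$ yields functions $\hat g_p$, cut-offs $Q_p$, and the telescoped function $\Phi_J=\sum_p\hat g_p\prod_{q>p}Q_q$ with $\|\Phi_J\|_\infty\leq 10$.

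For the main-term estimate I compute $\langle F_A,\hat g_p\rangle=|A_r|^{-1}\sum_{b\in A_r}|\hat F_A(pb)|$ via the divisor decomposition of $\hat F_A(pb)$. Since $\gcd(p,b)=1$, the divisors of $pb$ split as $\{d:d\mid b\}\sqcup\{pd:d\mid b\}$, giving
\[
\hat F_A(pb)=\frac{\chi(p)}{p}\hat F_A(b)+\tfrac{1}{2}\tau(p,b),\qquad \tau(p,b):=\sum_{d\mid b,\ pb/d\in A\cup(-A)}\chi(d)/d.
\]
For typical $b\in A_r$ the $d=1$ term dominates $\hat F_A(b)$, and a double-counting argument over triples $(a,b,d)$ with $a=pb/d\in A$ controls both the deviation $|\sum_b\hat F_A(b)-|A_r|/2|$ and $\sum_b|\tau(p,b)|$, establishing $\sum_b\hat F_A(b)\gg|A_r|$ and $\sum_b|\tau(p,b)|=o(|A_r|/p)$. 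The triangle inequality then gives $\sum_b|\hat F_A(pb)|\geq|\chi(p)\sum_b\hat F_A(pb)|\gg|A_r|/p$, so $\langle F_A,\hat g_p\rangle\gg 1/p$, and summing over primes and invoking Mertens' estimate,
\[
\sum_p\langle F_A,\hat g_p\rangle\gg\sum_{3<p\leq Q_1}\frac{1}{p}\gg\log\log Q_1\gg\log\log N.
\]

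The main obstacle is controlling the telescoping error $|\langle F_A,\Phi_J\rangle-\sum_p\langle F_A,\hat g_p\rangle|\leq\sum_{p<q}\int|F_A||\hat g_p\hat g_q|$, since a direct Cauchy--Schwarz fails: $\|F_A\|_2$ may be as large as $N^{1/2}$, well beyond what we can afford. The plan is to exploit the bounded Fourier support of $\hat g_p\hat g_q$, contained in $pA_r+qA_r\subset[-2Q_1T,2Q_1T]$, in order to pair $F_A*V_{2Q_1T}$ against $\hat g_p\hat g_q$ instead, and use Lemma \ref{lem:generalcont} with $Q$ chosen suitably larger than $\log T$ to bound the $L^2$-norm of the relevant truncation of $F_A$. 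An explicit Parseval computation that uses $A_r-A_r\subset\prod_{p\leq Q_1}p\cdot\mathbf{Z}$ yields $\|\hat g_p\hat g_q\|_2\leq|A_r|^{-1/2}$; combined with the hypothesis $|A_r|\geq(\log N)^4$ and summation over the $O(Q_1^2)=O(\log N)$ pairs, this should make the total error $o(\log\log N)$, so the main term survives and we conclude $\|F_A\|_1\geq\langle F_A,\Phi_J\rangle/10\gg\log\log N$.
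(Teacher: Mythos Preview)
Your approach differs fundamentally from the paper's, and it contains a genuine gap in the error analysis that I do not see how to repair.

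\medskip

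\textbf{The paper's route.} The paper does \emph{not} use dilates $pA_r$ as M--P--S levels. Instead it (i) sifts out the ``medium'' primes $p\in[(\log N)^{1/2},(\log N)^{20}]$ to form $F_{\mathrm{med}}$ with $\lVert F_{\mathrm{med}}\rVert_1\ll\lVert F_A\rVert_1$, (ii) projects $F_{\mathrm{med}}$ onto the residue class $r\bmod\prod_{p\le Q_1}p$, and (iii) shows via Lemma~\ref{lem:generalcont} that this projection, after convolving with $V_T$, equals $\tfrac12\hat{1}_{A(r,(0))\cup -A(-r,(0))}+E^*$ with $\lVert E^*\rVert_2\ll(\log N)^{-2}|A(r,(0))|^{1/2}$. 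Then the classical M--P--S argument (Theorem~\ref{th:MPS1}) applied to this clean exponential sum gives $\lVert F^*\rVert_1\gg\log K$ with $K=\log N$, whence $\log\log N$. The gain comes from $\log|A_r|$, not from $\sum_p 1/p$.

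\medskip

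\textbf{Where your argument breaks.} Your telescoping error is bounded by $\sum_{p<q}\int|F_A|\,|\hat g_p|\,|\hat g_q|$, and you propose Cauchy--Schwarz against $\lVert F_A*V_{2Q_1T}\rVert_2$. But $F_A*V_T$ contains $c_A$ (since $A\subset[-T,T]$), so $\lVert F_A*V_T\rVert_2\ge\lVert c_A\rVert_2/2\gg N^{1/2}$, regardless of the truncation parameter. Lemma~\ref{lem:generalcont} bounds only the $L^2$-norm of the \emph{tail} $E$ after the main exponential sum has been subtracted off; it cannot make $\lVert F_A*V\rVert_2$ small. With $\lVert\hat g_p\hat g_q\rVert_2\le|A_r|^{-1/2}$ and $\pi(Q_1)^2\asymp\log N/(\log\log N)^2$ pairs, the total error is at least of order $(\log N)N^{1/2}|A_r|^{-1/2}/(\log\log N)^2$; since the hypothesis only guarantees $|A_r|\ge(\log N)^4$, this swamps the main term by many powers of $N$.

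\medskip

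\textbf{The main term is also not as claimed.} Your assertion $\sum_{b\in A_r}|\tau(p,b)|=o(|A_r|/p)$ is false in general: if $pA_r\subset A$ (which is perfectly consistent with the hypotheses, since $pA_r$ lies in a different residue class $A(\cdot,(0,\ldots,1,\ldots,0))$), then the $d=1$ term gives $\tau(p,b)\ge 1-O(1/Q_1)$ for typical $b$, so $\sum_b|\tau(p,b)|\asymp|A_r|$. This does not by itself kill the lower bound $\langle F_A,\hat g_p\rangle\gg 1/p$ (indeed it could make it larger), but it shows that the decomposition $\hat F_A(pb)=\frac{\chi(p)}{p}\hat F_A(b)+\tfrac12\tau(p,b)$ is not the right way to isolate a main term, and cancellation between the two pieces is uncontrolled.

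\medskip

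The essential missing idea is the projection step: restricting to frequencies $\equiv r\bmod\prod_{p\le Q_1}p$ is what isolates the set $A_r$ and simultaneously forces the remaining $n$'s to be $Q$-rough, so that Lemma~\ref{lem:generalcont} applies to the \emph{error}, not to $F_A$ itself.
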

\begin{proof}
    Suppose that $r\in\prod_{p\leqslant Q_1}(\mathbf{Z}/p\mathbf{Z})^\times$ is such that $$|A(r,(0))|=\max_{s\in \prod_p(\mathbf{Z}/p\mathbf{Z})^\times}|A(s,(0))|\geqslant (\log N)^4.$$ We recall that the function $F_A$ is given by
    $$F_A(x)=\sum_{a\in A}\sum_{n\geqslant 1}\frac{\chi(n)}{n}c( nax).$$
 We define
    \begin{equation*}
        \mathcal{P}_\textnormal{med}=\{p\in[(\log N)^{1/2},(\log N)^{20}]: p\text{ is prime}\}
    \end{equation*} and we shall think of these as `medium'-range primes. Correspondingly, we define
    \begin{equation*}
        \mathcal{R}_\textnormal{med}=\{n\geqslant 1:n\text{ is coprime to all primes in }\mathcal{P}_\textnormal{med}\}.
    \end{equation*} We fix throughout the parameters $Q_1=(\log N)^{1/2}$ and $Q=(\log N)^{20}$. In this proof, we will consider the function $$F_\textnormal{med}(x)=\sum_{a\in A}\sum_{n \in \mathcal{R}_\textnormal{med}}\frac{\chi(n)}{n}c( nax).$$ First, we show how one may obtain $F_\textnormal{med}$ from $F_A$ by `sifting' out all primes in $\mathcal{P}_\textnormal{med}$ with a procedure much like that in Proposition \ref{prop:basicestimate}.
    \begin{lemma}\label{lem:F_mmodel}
        We have that $\lVert F_\textnormal{med}\rVert_1\ll \lVert F_A\rVert_1$.
        
    \end{lemma}
    \begin{proof}[Proof of Lemma \ref{lem:F_mmodel}]
        We follow the proof of Proposition \ref{prop:basicestimate} to show that \begin{equation}\label{eq:F_m}\sum_{k|\prod_{p\in\mathcal{P}_\textnormal{med}}p}\frac{\mu(k)\chi(k)}{k}F_A(kx)=F_\textnormal{med}(x).
        \end{equation} As $\chi$ is multiplicative, we can calculate
    \begin{align*}
        \sum_{k|\prod_{p\in\mathcal{P}_\textnormal{med}}p}\frac{\mu(k)\chi(k)}{k}\sum_{n\geqslant 1}\frac{\chi(n)}{n}c (nkx)
        &=\sum_{m\geqslant 1}\frac{\chi(m)}{m}c (mx)\sum_{k|\gcd(m,\prod_{p\in\mathcal{P}_\textnormal{med}}p)}\mu(k)\\
        &= \sum_{m\in\mathcal{R}_\textnormal{med}}\frac{\chi(m)}{m}c( mx).
    \end{align*}
    Replacing $x$ by $ax$ and summing over $a\in A$ proves \eqref{eq:F_m}. We again follow the proof of Proposition \ref{prop:basicestimate} to bound
    \begin{align*}
        \lVert F_\textnormal{med}\rVert_1&=\left\lVert \sum_{k|\prod_{p\in\mathcal{P}_\textnormal{med}}p}\frac{\mu(k)\chi(k)}{k}F(kx)\right\rVert_1\\
        &\leqslant \lVert F\rVert_1\prod_{p\in\mathcal{P}_\textnormal{med}}(1+1/p)\\
        &\ll \lVert F\rVert_1,
    \end{align*}
    since $\sum_{p\in\mathcal{P}_\textnormal{med}}\frac{1}{p}=\sum_{p\in[(\log N)^{1/2},(\log N)^{20}]}\frac{1}{p}\ll 1$ by Mertens' estimate.
    \end{proof}
    We will show that $\lVert F_\textnormal{med}\rVert_1\gg \log\log N$ which, by the lemma above, then implies the desired lower bound $\lVert F_A\lVert_1\gg \log \log N$. In order for the information about $A(r,(0))$ to be exploited, we will use the following lemma.
    \begin{lemma}\label{lem:L^1projection}
    Let $h(x)=\sum_{n\in\mathbf{Z}}\hat{h}(n)e(nx)\in L^1(\mathbf{T})$ and let $q\in\mathbf{N}$ and $\ell\in \mathbf{Z}/q\mathbf{Z}$. Then $\lVert \sum_{n\equiv\ell\mdsublem q}\hat{h}(n)e(nx)\lVert_1\leqslant \lVert h\rVert_1$.
        
    \end{lemma}
    \begin{proof}[Proof of Lemma \ref{lem:L^1projection}]
        This follows as
        \begin{align*}
            \sum_{k\equiv \ell\mdsub q}\hat{h}(k)e(kx)=\frac{1}{q}\sum_{j=0}^{q-1}e(-\ell j/q)h(x+j/q),
        \end{align*}
        by orthogonality of characters modulo $q$. Hence, by the triangle inequality and as $\lVert h(x+j/q)\rVert_1=\lVert h(x)\rVert_1$, the left hand side has $L^1$-norm at most $\lVert h\rVert_1$.
    \end{proof}
    Hence, if we define $$\Proj(F_\textnormal{med};r,(0))(x)=\sum_{k\equiv r\mdsub{\prod_{p\leqslant Q_1} p}}\hat{F}_\textnormal{med}(k)e(kx)$$
    to be the function obtained by keeping only those terms in the Fourier series of $F_\textnormal{med}$ whose frequencies are $r\md {\prod_{p\leqslant Q_1} p}$, then $\lVert \Proj(F_\textnormal{med};r,(0))\rVert_1\leqslant\lVert F_\textnormal{med}\rVert_1$.
    We shall analyse the structure of $\Proj(F_\textnormal{med};r,(0))$. Recall that we use the notation $$A(s,(\mu_p))=\{a\in A: a\equiv s\prod_{p\leqslant Q_1} p^{\mu_p}\md{\prod_{p\leqslant Q_1} p^{\mu_p+1}}\}.$$ We can decompose
    \begin{equation}\label{eq:F_mdecompomodel}F_\textnormal{med}(x)=\sum_{s,(\mu_p)}\sum_{a\in A(s,(\mu_p))}\left(\sum_{n\in\mathcal{R}_\textnormal{med}}\frac{\chi(n)}{n}c (nax)\right)
    \end{equation}
    and we shall consider the contribution from each $A(s,(\mu_p))$ to $\Proj(F_\textnormal{med};r,(0))$. It is convenient to write
    $$F_{\textnormal{med},s,(\mu_p)}(x)\vcentcolon=\sum_{a\in A(s,(\mu_p))}\sum_{n\in \mathcal{R}_\textnormal{med}}\frac{\chi(n)}{n}c( nax)$$
    so that \eqref{eq:F_mdecompomodel} becomes
    \begin{equation*}\label{eq:F_mdecompomodel2}
        F_\textnormal{med}=\sum_{s,(\mu_p)}F_{\textnormal{med},s,(\mu_p)}.
    \end{equation*}
    Let $a\in A(s,(\mu_p))$ for some $(s,(\mu_p))$ and suppose that one of its terms $\frac{\chi(n)}{n}c( nax)=\frac{\chi(n)}{2n}(e(nax)+e(-nax))$ contributes to $\Proj(F_\textnormal{med};r,(0))$. This means precisely that $n\in\mathcal{R}_\textnormal{med}$ satisfies $$\pm na\equiv r\md{\prod_{p\leqslant Q_1} p}$$ and in particular this implies that $\mu_p=0$ and that $\nu_p(n)=0$ for all $p\leqslant Q_1$ (here $\nu_p(n)$ denotes the largest integer such that $p^{\nu_p}$ divides $n$). We deduce that $\Proj\bigl(F_{\textnormal{med},s,(\mu_p)};r,(0)\bigr)=0$ unless $(\mu_p)= (0)$, and so
    \begin{align}\label{eq:projdecompomodel}
        \Proj(F_\textnormal{med};r,(0))=\sum_{s\in \prod_{p\leqslant Q_1}(\mathbf{Z}/p\mathbf{Z})^\times}\Proj\bigl(F_{\textnormal{med},s,(0)};r,(0)\bigr).
    \end{align}
    We know from above that if a term $\frac{\chi(n)}{n}c( nax)$ contributes to $\Proj(F_\textnormal{med};r,(0))$, where $n\in\mathcal{R}_\textnormal{med}$ and $a\in A(s,(0))$ for some $s\in \prod_{p\leqslant Q_1}(\mathbf{Z}/p\mathbf{Z})^\times$, then $(n,p)=1$ for all $p\leqslant Q_1$. Hence, we must in fact have that $n\in\mathcal{R}_{Q}$, where we recall that $\mathcal{R}_Q$ is the set of $Q$-rough numbers. Finally, if $a\in A(s,(0))$ then $a\equiv s\md{\prod p}$ by definition, so the congruence $\pm an\equiv r\md{\prod p}$ is satisfied precisely when $n\equiv \pm rs^{-1}\md{\prod p}$. It follows that we can precisely write down the contribution of $F_{\textnormal{med},s,(0)}$ to $\Proj(F_\textnormal{med};r,(0))$ as\footnote{Intuitively, one should think of this projection onto a residue class $r\md{\prod_{p\leqslant Q_1}p}$ (for which $A(r,(0))$ is large) as replacing the need to `sift' out primes $p\leqslant Q_1$ as in Bourgain's approach in Proposition \ref{prop:basicestimate}, in that it also restricts the sum over $n$ to a sum over $Q$-rough numbers. This is ultimately the reason why we are able to gain a factor of $\log Q_1\gg \log  \log N$.}
    \begin{align}\label{eq:projcontmodel} 
    \Proj\bigl(F_{\textnormal{med},s,(0)};r,(0)\bigr)&=\frac{1}{2}\sum_{a\in A(s,(0))}\sum_{\substack{n\in\mathcal{R}_Q\\ n\equiv rs^{-1}\mdsub{\prod p}}}\frac{\chi(n)}{n}e(nax)\\
    &+\frac{1}{2}\sum_{a\in A(s,(0))}\sum_{\substack{n\in\mathcal{R}_Q \\ n\equiv -rs^{-1}\mdsub{\prod p}}}\frac{\chi(n)}{n}e(-nax).\nonumber
    \end{align}
    Our objective is to obtain a strong lower bound for the $L^1$-norm of $\Proj(F_\textnormal{med};r,(0))$. For this purpose, we will show that the main term comes from the projection of $F_{\textnormal{med},r,(0)}$, whereas all other $F_{\textnormal{med},s,(0)}$ contribute an error term which is negligible in an $L^2$-sense. In practice, we will find estimates for the $L^1$-norm of the convolution $\Proj(F_\textnormal{med};r,(0))*V_{T}=\Proj(F_\textnormal{med}*V_{T};r,(0))$ where $V_{T}(x)$ is a de la Vall\'ee-Poussin kernel, rather than for $\Proj(F_\textnormal{med};r,(0))$. Recall that $T=e^{O((\log N)^5)}$ is such that $A\subset [-T,T]$. We also recall that the \emph{de la Vall\'ee-Poussin kernel} is given by
$$V_{T}(x)=\sum_{n=-2T}^{-T-1}\left(1-\frac{|2T+n|}{T}\right)e(nx)+\sum_{n=-T}^Te(nx)+\sum_{n=T+1}^{2T}\left(1-\frac{|2T-n|}{T}\right)e(nx)$$  and that it satisfies the following basic properties (see \cite[Chapter VIII, (205)]{natanson}):
\begin{itemize}
    \item $\hat{V}_{T}(n)=1$ for $|n|<T$ while $\hat{V}_{T}(n)=0$ for $|n|\geqslant 2T$. 
    \item $\lVert V_{T}\rVert_1 \leqslant 3$.
\end{itemize}
These imply the following useful relation between $\Proj(F_\textnormal{med};r,(0))*V_{T}$ and $F_A$.
\begin{lemma}\label{lem:dlVPconvmodel}
    We define $F^*=\Proj(F_\textnormal{med};r,(0))*V_{T}$. Then $\lVert F^*\rVert_1\ll \lVert F_A\rVert_1$.
\end{lemma}
\begin{proof}[Proof of Lemma \ref{lem:dlVPconvmodel}]
    Note that from (a trivial instance of) Young's convolution inequality we obtain
    $$\lVert \Proj(F_\textnormal{med};r,(0))*V_{T}\rVert_1\leqslant \lVert \Proj(F_\textnormal{med};r,(0))\rVert_1\lVert V_{T}\rVert_1\ll \lVert F_\textnormal{med}\rVert_1$$
    where we used Lemma \ref{lem:L^1projection} and that $\lVert V_{T}\rVert _1\ll 1$. The result follows upon recalling Lemma \ref{lem:F_mmodel}.
\end{proof}
The following lemma provides properties of the functions $\Proj(F_{\textnormal{med},s,(0)};r,(0))$ appearing in the expression \eqref{eq:projdecompomodel} for $\Proj(F_\textnormal{med};r,(0))$ which are relevant for the eventual purpose of applying a McGehee-Pigno-Smith style construction to lower bound the $L^1$ norm.
\begin{lemma}\label{lem:allscontmodel}
    We have that
    \begin{align*}   &\sum_{s\in \prod_{p\leqslant Q_1}(\mathbf{Z}/p\mathbf{Z})^\times}\Proj\bigl(F_{\textnormal{med},s,(0)};r,(0)\bigr)(x)\\
    &=\frac{1}{2}\left(\sum_{a\in A(r,(0))}e(ax) + \sum_{a\in A(-r,(0))}e(-ax) + E_{(0)}(x)\right)
    \end{align*}
        for some function $E_{(0)}:\mathbf{T}\to\mathbf{C}$ satisfying $\lVert E_{(0)}*V_{T}\rVert_2\ll (\log N)^{-2}|A(r,(0))|^{1/2}$.
\end{lemma}
We write $E_{(0)}$ with the subscript $(0)$ to be consistent with the notation used in the proof of the general setting Proposition \ref{prop:distmodpstruct}.
\begin{proof}[Proof of Lemma \ref{lem:allscontmodel}]
This is immediate from an application of Lemma \ref{lem:generalcont} with the sets $B_s=A(s,(0))$ and $k=\pm 1$, and with our choice of parameters $Q=(\log N)^{20}$ and $T=e^{O((\log N)^5)}$, by noting that the explicit Fourier expansion \eqref{eq:projcontmodel} of the functions $\Proj\bigl(F_{\textnormal{med},s,(0)};r,(0)\bigr)$ is precisely of the type to which Lemma \ref{lem:generalcont} applies. Furthermore, we note that by assumption $K=\max_s|B_s|=\max_s|A(s,(0))|= |A(r,(0))|$.
\end{proof}
Let us summarise what we have achieved thus far. We have shown that if $A\subset \mathbf{Z}\setminus\{0\}\subset [-T,T]$ where $T\leqslant e^{O((\log N)^5)}$, and moreover $|A(r,(0))|= \max_s|A(s,(0))|$, then by \eqref{eq:projdecompomodel} and the lemma above, we can write the function $\Proj(F_\textnormal{med};r,(0))$ as
\begin{align*}
    \Proj\bigl(F_{\textnormal{med}};r,(0)\bigr)
    =\frac{1}{2}\left(\sum_{a\in A(r,(0))\cup -A(-r,(0))}e(ax) + E_{(0)}(x)\right),\nonumber
\end{align*}
and where
\begin{equation*}\label{eq:L^2bound}
\lVert E_{(0)}*V_{T}\rVert_2\ll (\log N)^{-2}|A(r,(0))|^{1/2}.
\end{equation*}
Let us finally consider the function $F^*(x)=\Proj(F_\textnormal{med};r,(0))*V_{T}$. By the above, we may write
\begin{align*}
    F^*(x)=\left(\frac{1}{2}\sum_{a\in A(r,(0))\cup-A(-r,(0))}e(ax)\right)*V_{T}+E^*(x)
\end{align*}
where $\lVert E^*\rVert_2\ll (\log N)^{-2}|A(r,(0))|^{1/2}$. Hence,
\begin{align*}
    F^*(x) = \frac{1}{2}\sum_{a\in A(r,(0))\cup-A(-r,(0))}e(ax)+E^*,
\end{align*}
noting that $A\subset [-T,T]$ and that $\hat{V}_T(n)=1$ whenever $|n|\leqslant T$ which implies that $e(\pm ax)*V_T(x)=e(\pm ax)$ for all $a\in A$.

\medskip

As we are assuming that $|A(r,(0))|\geqslant (\log N)^4$, it now follows from an application of the next theorem with $B_1=A(r,(0)), B_2=-A(-r,(0))$, $E=E^*$ and $K=\log N$ that such a function $F^*$ of the form above has $L^1$-norm at least $\lVert F^*\rVert_1\gg \log \log N$. Note that this implies that $\lVert F_A\rVert_1 \gg \log\log N$ by Lemma \ref{lem:dlVPconvmodel} and hence this finishes the proof of Proposition \ref{prop:distmodel1}.
\begin{theorem}\label{th:MPS1}
    Let $K\geqslant 1$ be a parameter. Let $B_1,B_2\subset \mathbf{Z}$ be finite with $|B_1|\geqslant K$, and let $E\in L^2(\mathbf{T})$ satisfy $\lVert E\rVert_2\leqslant |B_1|^{1/2}/K$. Then $\lVert \hat{1}_{B_1} + \hat{1}_{B_2} + E\rVert_1\gg \log K$.
\end{theorem}
\begin{proof}[Proof of Theorem \ref{th:MPS1}]
This follows from a relatively straightforward adaptation of the method of McGehee-Pigno-Smith, see Appendix A.
\end{proof}
\end{proof}
We now come to the main result of this section. Corollary \ref{cor:largeA(s,mu)} shows that either $S(A)\geqslant N/3+c(\log N)^{1/2}$ or else one of the sets $A(r,(\nu_p))$ has size at least $N^{1/2}$ (say). The previous proposition allows us to obtain the desired bound $S(A)\geqslant N/3+c\log\log N$ if it happens to be the case that $(\nu_p)=(0)$. We show in the following proposition that one can still deduce strong structural information about $A$ from $A(r,(\nu_p))$ being large for a general $(\nu_p)\in\mathbf{N}^{\pi(Q_1)}$. We first need to introduce some convenient notation and we shall write $$(\nu'_p)_{p\leqslant Q_1}\prec (\nu_p)_{p\leqslant Q_1}$$ if $\nu'_p\leqslant\nu_p$ for all $p\leqslant Q_1$ and $\nu'_p<\nu_p$ for at least one $p$. We shall also not repeatedly write $p\leqslant Q_1$ and it shall be clear from context what the range of $p$ is.
\begin{proposition}\label{prop:distmodpstruct}
    Let $A\subset\mathbf{Z}\setminus\{0\}$ be a set of size $N$ and assume that $A\subset [-T,T]$ where $T\ll e^{O((\log N)^5)}$. Then either $\lVert F_A\rVert_1 \gg \log\log N$ so that $S(A)\geqslant N/3+c\log\log N$, or else the following holds. Let $Q_1=(\log N)^{1/2}$. Suppose that there exist an $r\in \prod_{p\leqslant Q_1}\left(\mathbf{Z}/p\mathbf{Z}\right)^\times$ and $(\nu_p)_{p\leqslant Q_1}\in\mathbf{N}^{\pi(Q_1)}$ such that $$|A(r,(\nu_p))|=\max_{s\in\prod_{p\leqslant Q_1}(\mathbf{Z}/p\mathbf{Z})^\times}|A(s,(\nu_p))|\geqslant (\log N)^{4}.$$
    Then there exist an $r'\in \prod_{p\leqslant Q_1}\left(\mathbf{Z}/p\mathbf{Z}\right)^\times$ and $(\nu'_p)_{p\leqslant Q_1}\in\mathbf{N}^{\pi(Q_1)}$ such that 
    \begin{itemize}
        \item $(\nu'_p)\prec(\nu_p)$,
        \item $|A(r',(\nu'_p))|\geqslant (\log N)^{-4}|A(r,(\nu_p))|$.
    \end{itemize}
\end{proposition}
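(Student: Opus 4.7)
The plan is to argue by contradiction and essentially run the proof of the model setting Proposition \ref{prop:distmodel1}, but now projecting onto the residue class $r\prod_{p\leqslant Q_1}p^{\nu_p} \mdlem{\prod_{p\leqslant Q_1} p^{\nu_p+1}}$ and treating the extra contributions coming from $(\mu_p)\prec(\nu_p)$ as errors. So suppose that for every $r'\in\prod_{p\leqslant Q_1}(\mathbf{Z}/p\mathbf{Z})^\times$ and every $(\nu'_p)\prec(\nu_p)$ we have $|A(r',(\nu'_p))|<(\log N)^{-4}|A(r,(\nu_p))|$; the goal is to deduce $\lVert F_A\rVert_1\gg \log\log N$.

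Exactly as in Proposition \ref{prop:distmodel1}, define the sifted function $F_\textnormal{med}$ with $\mathcal{P}_\textnormal{med}=\{p\text{ prime}:(\log N)^{1/2}\leqslant p\leqslant (\log N)^{20}\}$ and note $\lVert F_\textnormal{med}\rVert_1\ll \lVert F_A\rVert_1$ by the sifting argument of Lemma \ref{lem:F_mmodel}. Then consider $F^*=\Proj(F_\textnormal{med};r,(\nu_p))*V_{T}$, which by Lemma \ref{lem:L^1projection} and Young's inequality satisfies $\lVert F^*\rVert_1\ll \lVert F_A\rVert_1$. Decomposing $F_\textnormal{med}=\sum_{s,(\mu_p)}F_{\textnormal{med},s,(\mu_p)}$, one checks (as in the model case) that $F_{\textnormal{med},s,(\mu_p)}$ contributes to $\Proj(F_\textnormal{med};r,(\nu_p))$ only when $(\mu_p)\preceq(\nu_p)$, since we need $\nu_p(na)=\nu_p$ for every $p\leqslant Q_1$. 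For each such $(\mu_p)$, writing $n=m\prod p^{\nu_p-\mu_p}$ with $m$ coprime to $\prod_{p\leqslant Q_1}p$ and using multiplicativity of $\chi$, the contribution has the form
\begin{equation*}
\frac{\chi(\prod p^{\nu_p-\mu_p})}{2\prod p^{\nu_p-\mu_p}}\sum_{\pm}\sum_{a\in A(s,(\mu_p))}\sum_{\substack{m\in\mathcal{R}_Q\\ m\equiv \pm rs^{-1}\mdsublem{\prod p}}}\frac{\chi(m)}{m}e(\pm m a\prod p^{\nu_p-\mu_p}x),
\end{equation*}
which is precisely the situation of Lemma \ref{lem:generalcont} with $k=\pm\prod p^{\nu_p-\mu_p}$ and $B_s=A(s,(\mu_p))$.

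Applying Lemma \ref{lem:generalcont} for each $(\mu_p)\preceq(\nu_p)$ and convolving with $V_T$, the only contribution with frequencies of $L^2$-mass comparable to $|A(r,(\nu_p))|^{1/2}$ is the ``true main term'' from $(\mu_p)=(\nu_p)$, namely
\begin{equation*}
\tfrac{1}{2}\sum_{a\in A(r,(\nu_p))}e(ax)+\tfrac{1}{2}\sum_{a\in A(-r,(\nu_p))}e(-ax).
\end{equation*}
Every other piece (the strict-$\prec$ ``main terms'' $\sum_{a\in A(\pm r,(\mu_p))}e(\pm a\prod p^{\nu_p-\mu_p}x)$, rescaled by $1/\prod p^{\nu_p-\mu_p}$, and the error terms $E_{(\mu_p)}^\pm *V_T$) gets lumped into an error function $E^*$. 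By our standing assumption, the strict-$\prec$ main terms have $L^2$-norm at most $(\log N)^{-2}|A(r,(\nu_p))|^{1/2}$ per term, and the Lemma \ref{lem:generalcont} errors contribute at most $(\log T)Q^{-1/2}K_{(\mu_p)}^{1/2}\ll(\log N)^{-5}|A(r,(\nu_p))|^{1/2}$ per term. Summing over $(\mu_p)\preceq(\nu_p)$ with the weights $1/\prod p^{\nu_p-\mu_p}$ and using Mertens
\begin{equation*}
\sum_{(\mu_p)\preceq(\nu_p)}\frac{1}{\prod_{p\leqslant Q_1}p^{\nu_p-\mu_p}}\leqslant \prod_{p\leqslant Q_1}\left(1-\frac{1}{p}\right)^{-1}\ll \log\log N,
\end{equation*}
yields $\lVert E^*\rVert_2\ll(\log N)^{-1}|A(r,(\nu_p))|^{1/2}\leqslant|A(r,(\nu_p))|^{1/2}/\log N$. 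Applying Theorem \ref{th:MPS1} with $B_1=A(r,(\nu_p))$, $B_2=-A(-r,(\nu_p))$ and $K=\log N$ (using $|B_1|\geqslant(\log N)^4\geqslant K$) gives $\lVert F^*\rVert_1\gg\log\log N$, hence $\lVert F_A\rVert_1\gg\log\log N$.

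The main obstacle is the bookkeeping for the strict-$\prec$ contributions: one must ensure that both the ``main terms'' $\sum_{a\in A(r,(\mu_p))}e(a\prod p^{\nu_p-\mu_p}x)$ (which are themselves indicator sums and could in principle contain a residue-class contribution of comparable size to $A(r,(\nu_p))$) and the Lemma \ref{lem:generalcont} error terms all sum to something of $L^2$-mass $\ll|A(r,(\nu_p))|^{1/2}/\log N$. The hypothesis that no $(\nu'_p)\prec(\nu_p)$ gives a large $A(r',(\nu'_p))$ is precisely what makes the first of these contributions manageable, while the weights $1/\prod p^{\nu_p-\mu_p}$ combined with Mertens' estimate absorb the potentially large number of tuples $(\mu_p)\prec(\nu_p)$.
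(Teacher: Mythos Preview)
Your proposal is correct and follows essentially the same approach as the paper's proof: argue by contradiction, project $F_\textnormal{med}$ onto the residue class $r\prod_{p\leqslant Q_1} p^{\nu_p}\pmod{\prod_{p\leqslant Q_1} p^{\nu_p+1}}$, apply Lemma \ref{lem:generalcont} at each level $(\mu_p)\preceq(\nu_p)$ to isolate the main term coming from $(\mu_p)=(\nu_p)$, bound all remaining contributions (both the strict-$\prec$ ``main terms'' and the Lemma \ref{lem:generalcont} errors) in $L^2$ using the contradiction hypothesis together with the Mertens-type bound on $\sum_{(\mu_p)\preceq(\nu_p)}1/\prod p^{\nu_p-\mu_p}$, and then invoke Theorem \ref{th:MPS1}. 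The paper packages the $L^2$ error estimate into a separate lemma (Lemma \ref{lem:L^2eror}) but the content and the bookkeeping are identical to what you outline.
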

\begin{proof}
    We argue by assuming that there exist $r,(\nu_p)$ such that $\max_s|A(s,(\nu_p))|=|A(r,(\nu_p))|\geqslant (\log N)^4$ and that $|A(r',(\nu'_p))|< (\log N)^{-4}|A(r,(\nu_p))|$ for all $r'$ and $(\nu'_p)\prec(\nu_p)$. We shall show that under these assumptions, $\lVert F_A\rVert_1\gg \log\log N$ and recall from Proposition \ref{prop:basicestimate} that such a bound implies that $S(A)\geqslant N/3+c\log\log N$ for some absolute $c>0$.
    
    \medskip
    
    The proof has many similarities to the proof of the model setting above, so we shall be brief in our discussion of these parts. Let us consider, as per usual, the function
    $$F_A(x)=\sum_{a\in A}\sum_{n\geqslant 1}\frac{\chi(n)}{n}c( nax).$$
    We fix throughout the parameters $Q_1=(\log N)^{1/2}$ and $Q=(\log N)^{20}$. We define
    $\mathcal{P}_\textnormal{med}=\{p\in[Q_1,Q]: p\text{ is prime}\}$ and $\mathcal{R}_\textnormal{med}=\{n\geqslant 1:(n,p)=1\text{ for all }p\in\mathcal{P}_\textnormal{med}\}$.
    We consider the function $F_\textnormal{med}(x)=\sum_{a\in A}\sum_{n \in \mathcal{R}_\textnormal{med}}\frac{\chi(n)}{n}c( nax)$. Exactly as in Lemma \ref{lem:F_mmodel}, we have the following relation between the $L^1$ norms of $F_\textnormal{med}$ and $F_A$.
    \begin{lemma}\label{lem:F_m}
        We have that $\lVert F_\textnormal{med}\rVert_1\ll \lVert F_A\rVert_1$.
        
    \end{lemma}
    Combining this with Lemma \ref{lem:L^1projection} shows the following.
    \begin{lemma}\label{lem:L^1proj}
    Let $$\Proj(F_\textnormal{med};r,(\nu_p))(x)=\sum_{k\equiv r\prod_{p\leqslant Q_1} p^{\nu_p}\mdsublem{\prod_{p\leqslant Q_1} p^{\nu_p+1}}}\hat{F}_\textnormal{med}(k)e(kx)$$
    be the function obtained by keeping only those terms in the Fourier series of $F_\textnormal{med}$ whose frequencies are $r\prod_{p\leqslant Q_1} p^{\nu_p}\mdlem {\prod_{p\leqslant Q_1} p^{\nu_p+1}}$. Then $\lVert \Proj(F_\textnormal{med};r,(\nu_p))\rVert_1\ll \lVert F_A\rVert_1$.
        
    \end{lemma}
    We shall analyse the structure of $\Proj(F_\textnormal{med};r,(\nu_p))$. We can decompose
    \begin{equation}\label{eq:F_mdecompo}F_\textnormal{med}(x)=\sum_{s,(\mu_p)}\sum_{a\in A(s,(\mu_p))}\left(\sum_{n\in\mathcal{R}_\textnormal{med}}\frac{\chi(n)}{n}c( nax)\right)
    \end{equation}
    and we shall consider the contribution from each $A(s,(\mu_p))$ to $\Proj(F_\textnormal{med};r,(\nu_p))$. It is convenient to write
    $$F_{\textnormal{med},s,(\mu_p)}(x)\vcentcolon=\sum_{a\in A(s,(\mu_p))}\sum_{n\in \mathcal{R}_\textnormal{med}}\frac{\chi(n)}{n}c( nax)$$
    so that \eqref{eq:F_mdecompo} becomes
    \begin{equation}\label{eq:F_mdecompo2}
        F_\textnormal{med}=\sum_{s,(\mu_p)}F_{\textnormal{med},s,(\mu_p)}.
    \end{equation}
    Let $a\in A(s,(\mu_p))$ for some $(s,(\mu_p))$ and suppose that one of its terms $\frac{\chi(n)}{n}c( nax)=\frac{\chi(n)}{2n}(e(nax)+e(-nax))$ contributes to $\Proj(F_\textnormal{med};r,(\nu_p))$. This means precisely that $n\in\mathcal{R}_\textnormal{med}$ satisfies $$\pm na\equiv r\prod_{p\leqslant Q_1} p^{\nu_p}\md{\prod_{p\leqslant Q_1} p^{\nu_p+1}}$$ and in particular this implies that $\nu_p\geqslant \nu_p(a)=\mu_p$ and that $\nu_p(n)=\nu_p-\mu_p$ for all $p\leqslant Q_1$ (here $\nu_p(n)$ denotes the largest integer such that $p^{\nu_p}$ divides $n$). We deduce that $\Proj\bigl(F_{\textnormal{med},s,(\mu_p)};r,(\nu_p)\bigr)=0$ unless $(\mu_p)\preceq (\nu_p)$, meaning that the only terms in \eqref{eq:F_mdecompo2} which contribute to $\Proj\bigl(F_\textnormal{med};r,(\nu_p)\bigr)$ come from those $(s,(\mu_p))$ with $(\mu_p)\preceq (\nu_p)$ and so
    \begin{align}\label{eq:projdecompo}
        \Proj(F_\textnormal{med};r,(\nu_p))=\sum_{(s,(\mu_p)):(\mu_p)\preceq(\nu_p)}\Proj\bigl(F_{\textnormal{med},s,(\mu_p)};r,(\nu_p)\bigr).
    \end{align}
    Let us consider $(\mu_p)\preceq (\nu_p)$. We know from our discussion above that if a term $\frac{\chi(n)}{n}c( nax)$ contributes to $\Proj(F_\textnormal{med};r,(\nu_p))$, where $n\in\mathcal{R}_\textnormal{med}$ and $a\in A(s,(\mu_p))$ for some $s\in \prod_{p\leqslant Q_1}(\mathbf{Z}/p\mathbf{Z})^\times$, then $\nu_p(n)=\nu_p-\mu_p$ for all $p\leqslant Q_1$. Hence, we can write $n=n'\prod_{p\leqslant Q_1}p^{\nu_p-\mu_p}$ for some $n'\in\mathcal{R}_{Q}$, where we recall that $\mathcal{R}_Q$ is the set of $Q$-rough numbers. Finally, if $a\in A(s,(\mu_p))$ then $a\equiv s\prod p^{\mu_p}\md{\prod p^{\mu_p+1}}$ by definition, so the congruence $\pm an'\prod_{p\leqslant Q_1}p^{\nu_p-\mu_p}\equiv r\prod_{p\leqslant Q_1} p^{\nu_p}\md{\prod_{p\leqslant Q_1} p^{\nu_p+1}}$ is satisfied precisely when $n'\equiv \pm rs^{-1}\md{\prod p}$. It follows that for each $(s,(\mu_p))$ with $(\mu_p)\preceq(\nu_p)$, we can precisely write down the contribution of $F_{\textnormal{med},s,(\mu_p)}$ to $\Proj(F_\textnormal{med};r,(\nu_p))$ as
    \begin{align*}
        \Proj\bigl(F_{\textnormal{med},s,(\mu_p)};r,(\nu_p)\bigr)(x)&=\sum_{a\in A(s,(\mu_p))}\sum_{\substack{n=n'\prod p^{\nu_p-\mu_p}\\ n'\in\mathcal{R}_Q\\ n'\equiv rs^{-1}\mdsub{\prod p}}}\frac{\chi(n)}{2n}e(nax)\\
        &+ \sum_{a\in A(s,(\mu_p))}\sum_{\substack{n=n'\prod p^{\nu_p-\mu_p}\\ n'\in \mathcal{R}_Q \\ n'\equiv -rs^{-1}\mdsub{\prod p}}}\frac{\chi(n)}{2n}e(-nax),
    \end{align*}
    Using the multiplicative nature of $\chi$, we can take out the factor $k(\mu_p)\vcentcolon=\prod_{p\leqslant Q_1} p^{\nu_p-\mu_p}$ and simplify this further to
    \begin{align}\label{eq:projcont} 
    \Proj\bigl(F_{\textnormal{med},s,(\mu_p)};r,(\nu_p)\bigr)&=\frac{\chi(k(\mu_p))}{2k(\mu_p)}\sum_{a\in A(s,(\mu_p))}\sum_{\substack{n'\in\mathcal{R}_Q\\ n'\equiv rs^{-1}\mdsub{\prod p}}}\frac{\chi(n')}{n'}e(n'k(\mu_p)ax)\\
    &+\frac{\chi(k(\mu_p))}{2k(\mu_p)}\sum_{a\in A(s,(\mu_p))}\sum_{\substack{n'\in\mathcal{R}_Q \\ n'\equiv -rs^{-1}\mdsub{\prod p}}}\frac{\chi(n')}{n'}e(-n'k(\mu_p)ax).\nonumber
    \end{align}
    Contrary to the model case in Proposition \ref{prop:distmodel1}, these functions $\Proj\bigl(F_{\textnormal{med},s,(\mu_p)};r,(\nu_p)\bigr)$ can contribute to $\Proj(F_\textnormal{med};r,(\nu_p))$ for all $s\in\prod_{p\leqslant Q_1}(\mathbf{Z}/p\mathbf{Z})^\times$ and all $(\mu_p)\preceq(\nu_p)$, rather than only for $(\mu_p)=(\nu_p)$. For the purpose of obtaining a lower bound for the $L^1$-norm of $\Proj(F_\textnormal{med};r,(\nu_p))$, we will show that the main term still comes from the projection of $F_{\textnormal{med},r,(\nu_p)}$, whereas all other $F_{\textnormal{med},s,(\mu_p)}$ with $(\mu_p)\preceq (\nu_p)$ contribute an error term which is negligible in an $L^2$-sense due to our assumption that $\max_{(\mu_p)\prec(\nu_p)} \max_s|A(s,(\mu_p))|<(\log N)^{-4}|A(r,(\nu_p))|$. Again, we will in practice find estimates for the $L^1$-norm of the convolution $\Proj(F_\textnormal{med};r,(\nu_p))*V_{T}$ where $V_{T}(x)$ is a de la Vall\'ee-Poussin kernel, and $T=e^{O((\log N)^5)}$ is such that $A\subset [-T,T]$. Analogously to Lemma \ref{lem:dlVPconvmodel}, we note the following useful relation between $\Proj(F_\textnormal{med};r,(\nu_p))*V_{T}$ and $F_A$.
\begin{lemma}\label{lem:dlVPconv}
    We define $F^*=\Proj(F_\textnormal{med};r,(\nu_p))*V_{T}$. Then $\lVert F^*\rVert_1\ll \lVert F_A\rVert_1$.
\end{lemma}
Recall that we have an explicit expression \eqref{eq:projdecompo} for $\Proj(F_\textnormal{med};r,(\nu_p))$ in terms of the contributions $\Proj(F_{\textnormal{med},s,(\mu_p)};r,(\mu_p))$ which are given by \eqref{eq:projcont}, for $(\mu_p)\preceq (\nu_p)$. The following lemma provides a useful description of these $\Proj(F_{\textnormal{med},s,(\mu_p)};r,(\nu_p))$.
\begin{lemma}\label{lem:allscont}
    Let $(\mu_p)\preceq (\nu_p)$ and define $k(\mu_p)=\prod_{p\leqslant Q_1}p^{\nu_p-\mu_p}$. Then
    \begin{align*}   &\sum_s\Proj\bigl(F_{\textnormal{med},s,(\mu_p)};r,(\nu_p)\bigr)(x)\\
    &=\frac{\chi(k(\mu_p))}{2k(\mu_p)}\left(\sum_{a\in A(r,(\mu_p))}e(ak(\mu_p)x) + \sum_{a\in A(-r,(\mu_p))}e(-ak(\mu_p)x) + E_{(\mu_p)}(x)\right)
    \end{align*}
        for some function $E_{(\mu_p)}:\mathbf{T}\to\mathbf{C}$ satisfying $\lVert E_{(\mu_p)}*V_{T}\rVert_2\ll (\log N)^{-2}|A(r,(\nu_p))|^{1/2}$.
\end{lemma}
\begin{proof}[Proof of Lemma \ref{lem:allscont}]
This is immediate from an application of Lemma \ref{lem:generalcont} with $B_s=A(s,(\mu_p))$ and $k=\pm k(\mu_p)$, and with our choice of parameters $Q=(\log N)^{20}$ and $T=e^{O((\log N)^5)}$, by noting that the explicit Fourier expansion \eqref{eq:projcont} of the functions $\Proj\bigl(F_{\textnormal{med},s,(\mu_p)};r,(\nu_p)\bigr)$ is precisely of the type to which Lemma \ref{lem:generalcont} applies. Furthermore, we note that by assumption $K=\max_s|B_s|=\max_s|A(s,(\mu_p))|\leqslant |A(r,(\nu_p))|$ for all $(\mu_p)\preceq (\nu_p)$.
\end{proof}
Let us summarise what we have achieved thus far. We have shown that if $A\subset \mathbf{Z}\setminus\{0\}$ is a set of $N$ integers with $A\subset [-T,T]$ where $T\leqslant e^{O((\log N)^5)}$, and moreover $|A(r,(\nu_p))|\geqslant \max_s|A(s,(\mu_p))|$ for all $(\mu_p)\preceq(\nu_p)$, then by \eqref{eq:projdecompo} we can write the function $\Proj(F_\textnormal{med};r,(\nu_p))$ as
\begin{align}\label{eq:fullF_m}
        \Proj(F_\textnormal{med};r,(\nu_p))=\sum_{(\mu_p):(\mu_p)\preceq(\nu_p)}\sum_s\Proj\bigl(F_{\textnormal{med},s,(\mu_p)};r,(\nu_p)\bigr),
    \end{align}
and where there exists, for each $(\mu_p)\preceq(\nu_p)$, a function $E_{(\mu_p)}$ satisfying 
\begin{equation}\label{eq:L^2bound}
\lVert E_{(\mu_p)}*V_{T}\rVert_2\ll (\log N)^{-2}|A(r,(\nu_p))|^{1/2}
\end{equation}
such that
\begin{align}\label{eq:totalmu_pcont}
    &\sum_s\Proj\bigl(F_{\textnormal{med},s,(\mu_p)};r,(\nu_p)\bigr)(x)\\
    &=\frac{\chi(k(\mu_p))}{2k(\mu_p)}\left(\sum_{a\in A(r,(\mu_p))\cup -A(-r,(\mu_p))}e(ak(\mu_p)x) + E_{(\mu_p)}(x)\right),\nonumber
\end{align}
and we recall that $k(\mu_p)=\prod_{p\leqslant Q_1}p^{\nu_p-\mu_p}$ and that $s$ always ranges over $\prod_{p\leqslant Q_1}(\mathbf{Z}/p\mathbf{Z})^\times$.

\medskip

We are in addition assuming that $|A(r,(\nu_p))|=\max_s|A(s,(\nu_p))|\geqslant (\log N)^4$ while
\begin{equation}\label{eq:otherAsmall}\max_s|A(s,(\mu_p))|\leqslant (\log N)^{-4}|A(r,(\nu_p))|
\end{equation}
for all $(\mu_p)\prec (\nu_p)$. We show that this implies that out of all terms \eqref{eq:totalmu_pcont}, the contribution from $A(r,(\nu_p))\cup -A(-r,(\nu_p))$ turns out to form, for the purpose of obtaining a lower bound for $\lVert \Proj(F_\textnormal{med};r,(\nu_p))\rVert_1$, the main term. In practice, this means that we prove that the contribution of all other terms combined is negligible in an $L^2$-sense. To make this precise, we define
$$E(x)\vcentcolon= \frac{1}{2}E_{(\nu_p)}(x) + \sum_{(\mu_p)\prec(\nu_p)} \sum_{s}\Proj\bigl(F_{\textnormal{med},s,(\mu_p)};r,(\nu_p)\bigr)(x)$$
which by \eqref{eq:fullF_m} and \eqref{eq:totalmu_pcont} allows us to write
\begin{align}\label{eq:projwithE}
    \Proj(F_\textnormal{med};r,(\nu_p))(x)=\frac{1}{2}\sum_{a\in A(r,(\nu_p))\cup-A(-r,(\nu_p))}e(ax) + E(x),
\end{align} and we will prove the following $L^2$-estimate.
\begin{lemma}\label{lem:L^2eror}
    $E$ satisfies the estimate $\lVert E*V_{T}\rVert_2 \ll(\log N)^{-1}|A(r,(\nu_p))|^{1/2}$.
\end{lemma}
\begin{proof}[Proof of Lemma \ref{lem:L^2eror}]
   By \eqref{eq:L^2bound} we have the desired bound for $\lVert E_{(\nu_p)}*V_T\rVert_2$ so it suffices to consider the contribution of those terms \eqref{eq:totalmu_pcont} from all $(\mu_p)\prec(\nu_p)$ which we denote by \begin{align*}
   E'(x)&=\sum_{(\mu_p)\prec(\nu_p)}\frac{\chi(k(\mu_p))}{2k(\mu_p)}\left(\sum_{a\in A(r,(\mu_p))}e(ak(\mu_p)x)+\sum_{a\in A(-r,(\mu_p))}e(-ak(\mu_p)x)\right)\\
   &+\sum_{(\mu_p)\prec(\nu_p)}\frac{\chi(k(\mu_p))}{2k(\mu_p)}E_{(\mu_p)}(x).
   \end{align*}
   Let us denote the first term on the right hand side of the equation above by $E_1$ and the second $E_2$. The term $E_2$ is easy to estimate upon recalling \eqref{eq:L^2bound}, that $k(\mu_p)=\prod_{p\leqslant Q_1}p^{\nu_p-\mu_p}$ and that $\chi$ is $1$-bounded:
   \begin{align*}
       \lVert E_2*V_{T}\rVert_2&\leqslant \sum_{(\mu_p)\prec(\nu_p)}\frac{1}{2\prod_pp^{\nu_p-\mu_p}}\lVert E_{(\mu_p)}*V_{T}\rVert_2\\
       &\ll (\log N)^{-2}|A(r,(\nu_p))|^{1/2}\prod_{p\leqslant Q_1}(1+1/p+1/p^2+\dots)\\
       &\ll (\log N)^{-1}|A(r,(\nu_p))|^{1/2},
   \end{align*}
   where we used that $\prod_{p\leqslant Q_1}(1-1/p)^{-1}\ll \log Q_1\ll \log\log N$ by Mertens' theorem. For the term $E_1$, we may note by Parseval and \eqref{eq:otherAsmall} that $$\left\lVert \sum_{a\in A(r,(\mu_p))}e(ak(\mu_p)x)\right\rVert_2=|A(r,(\mu_p))|^{1/2}\leqslant (\log N)^{-2}|A(r,(\nu_p))|^{1/2}$$ for $(\mu_p)\prec (\nu_p)$ and that the same bound holds for the contribution from $A(-r,(\mu_p))$. By Young's inequality, this $L^2$ bound also holds after convolving with $V_{T}$ since $\lVert V_{T}\rVert_1\ll 1$:
   \begin{align*}
       \left\lVert \Big(\sum_{a\in A(r,(\nu_p))}e(ak(\mu_p)x)\Big)*V_{T}\right\rVert_2&\leqslant \left\lVert\sum_{a\in A(r,(\mu_p))}e(ak(\mu_p)x)\right\rVert_2\lVert V_{T}\rVert_1\\
       &\ll (\log N)^{-2}|A(r,(\nu_p))|^{1/2}.
   \end{align*} Hence, summing over all $(\mu_p)\prec(\nu_p)$ gives
\begin{align*}
\lVert E_1*V_{T}\rVert_2&\ll (\log N)^{-2}|A(r,(\nu_p))|^{1/2}\sum_{(\mu_p)\prec(\nu_p)}\frac{1}{\prod_p p^{\nu_p-\mu_p}}\\
&\ll (\log N)^{-2}|A(r,(\nu_p))|^{1/2}\prod_{p\leqslant Q_1}(1+1/p+1/p^2+\dots)\\
&\ll (\log N)^{-1}|A(r,(\nu_p))|^{1/2}.
   \end{align*}
\end{proof}

\medskip

Let us finally consider the function $F^*(x)=\Proj(F_\textnormal{med};r,(\nu_p))*V_{T}$. From \eqref{eq:projwithE} and Lemma \ref{lem:L^2eror}, we may write
\begin{align*}
    F^*(x)=\left(\frac{1}{2}\sum_{a\in A(r,(\nu_p))\cup-A(-r,(\nu_p))}e(ax)\right)*V_{T}+E^*(x)
\end{align*}
where $\lVert E^*\rVert_2\leqslant (\log N)^{-1}|A(r,(\nu_p))|^{1/2}$. Hence,
\begin{align*}
    F^*(x) = \frac{1}{2}\sum_{a\in A(r,(\nu_p))\cup-A(-r,(\nu_p))}e(ax)+E^*,
\end{align*}
noting that $A\subset [-T,T]$ and that $\hat{V}_T(n)=1$ whenever $|n|\leqslant T$ which implies that $e(\pm ax)*V_T(x)=e(\pm ax)$ for all $a\in A$.

\medskip

An application of Theorem \ref{th:MPS1} with $B_1=A(r,(\nu_p)), B_2=-A(-r,(\nu_p))$, $E=E^*$ and $K=\log N$ implies that that $\lVert F^*\rVert_1\gg \log\log N$ and by Lemma \ref{lem:dlVPconv} we deduce that $\lVert F_A\rVert_1\gg \log \log N$ which finishes the proof of Proposition \ref{prop:distmodpstruct}.
\end{proof}

\section{Non-Archimedean test functions}
In this section, we show that $F_A$ has large $L^1$-norm for sets $A$ which exhibit the strong structure provided by Proposition \ref{prop:distmodpstruct}. We shall achieve this by showing that $$c_A+R_Q=\sum_{a\in A}c(ax)+\sum_{a\in A}\sum_{1<n\in\mathcal{R}_Q}\frac{\chi(n)}{n}c(nax)$$ has large $L^1$-norm, where $Q=(\log N)^{20}$, and then quoting (iv) in Proposition \ref{prop:basicestimate}. We will obtain our bound for the $L^1$-norm by constructing a test function in a McGehee-Pigno-Smith style fashion, but with a crucial modification: instead of constructing the test function using an increasing ordering of the Fourier spectrum as in Appendix A, in particular relying on one-sided Fourier series, we construct the test function using an ordering of (a subset of) the spectrum based on $p$-adic valuations. First, we begin by showing that sets $A$ which have the properties that Corollary \ref{cor:largeA(s,mu)} and Proposition \ref{prop:distmodpstruct} establish contain the following convenient structure.
\begin{lemma}\label{lem:Astrongstructure}
    Let $A\subset\mathbf{Z}$ and assume that $A$ has the following two properties for a parameter $Q_1$.
    \begin{itemize}
        \item There exist $r^{*}\in\prod_{p\leqslant Q_1}(\mathbf{Z}/p\mathbf{Z})^\times$ and $(\nu_p^{*})_{p\leqslant Q_1}\in\mathbf{N}^{\pi(Q_1)}$ with $|A(r^{*},(\nu_p^{*}))|\geqslant N^{1/2}$.
        \item Whenever $r,(\nu_p)$ are such that $|A(r,(\nu_p))|=\max_{s\in \prod_{p\leqslant Q_1}(\mathbf{Z}/p\mathbf{Z})^\times}|A(s,(\nu_p))|$ and $|A(r,(\nu_p))|\geqslant (\log N)^4$, then there exist $r'$ and $(\nu'_p)\prec (\nu_p)$ satisfying $|A(r',(\nu'_p))|\geqslant (\log N)^{-4}|A(r,(\nu_p))|$.
    \end{itemize}
    Then we can find an integer $J\gg (\log N)/ \log \log N $, and a sequence of residues $r^{(i)}\in\prod_{p\leqslant Q_1}(\mathbf{Z}/p\mathbf{Z})^\times$ and $(\nu_p^{(i)})\in\mathbf{N}^{\pi(Q_1)}$ such that:
    \begin{itemize}
        \item [(i)] $|A(r^{(i)},(\nu_p^{(i)}))|=\max_s |A(s,(\nu_p^{(i)}))|$ for all $i\in[J]$,
        \item [(ii)] $|A(r^{(i+1)},(\nu_p^{(i+1)}))|\geqslant (\log N)^{4}|A(r^{(i)},(\nu_p^{(i)}))|$ for all $i<J$,
        \item [(iii)] $(\nu_p^{(i+1)})\succ (\nu_p^{(i)})$ for all $i<J$.
    \end{itemize}
\end{lemma}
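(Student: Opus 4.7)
The plan is to iterate the second hypothesis backwards in $\prec$ starting from $(r^{*},(\nu^*_p))$, then reverse the resulting chain and extract a subsequence at suitably spaced size levels. First, I would replace $r^*$ by the argmax at level $(\nu^*_p)$ (which preserves the bound $\geqslant N^{1/2}$) and set $(R_0, M_0):=(r^*, (\nu^*_p))$. Iteratively I apply the second hypothesis: provided $|A(R_k, M_k)| \geqslant (\log N)^4$ and $M_k \neq (0)$, the hypothesis produces $(r', (\nu'_p))$ with $(\nu'_p) \prec M_k$ and a size loss of at most $(\log N)^{-4}$; then I set $M_{k+1}:=(\nu'_p)$ and let $R_{k+1}$ be the argmax at level $M_{k+1}$ (which only improves the size bound). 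The iteration stops at some step $K$, and a key observation is that $|A(R_K, M_K)| < (\log N)^4$ in either termination case: if $M_K = (0)$ with size $\geqslant (\log N)^4$, the hypothesis applied at the argmax at $(0)$ would demand $(\nu'_p) \prec (0)$, impossible. Consequently $|A(R_k, M_k)| \geqslant (\log N)^4$ for every $k < K$.

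After reversing via $\tilde{S}_j := |A(R_{K-j}, M_{K-j})|$ and $\tilde{M}_j := M_{K-j}$, the resulting sequence has $\tilde{M}_j$ strictly $\prec$-increasing in $j$, $\tilde{S}_K \geqslant N^{1/2}$, $\tilde{S}_j \geqslant (\log N)^4$ for $j \geqslant 1$, $\tilde{S}_0 < (\log N)^4$, and the forward-growth bound $\tilde{S}_{j+1} \leqslant (\log N)^4 \tilde{S}_j$ holds. In particular $\tilde{S}_1 < (\log N)^8$.

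For the extraction I use levels $T_\ell := (\log N)^{8\ell+4}$ for $\ell=0,1,\ldots,L$ with $L := \lfloor \log N/(16\log\log N) - 1/2 \rfloor$, and define $j_\ell$ to be the smallest $j \in [1, K]$ with $\tilde{S}_j \geqslant T_\ell$. Existence of $j_L$ follows from $\tilde{S}_K \geqslant N^{1/2} \geqslant T_L$. Minimality of $j_\ell$ gives $\tilde{S}_{j_\ell - 1} < T_\ell$ when $j_\ell \geqslant 2$, hence $\tilde{S}_{j_\ell} \leqslant (\log N)^4 T_\ell = (\log N)^{8\ell+8}$; for the edge case $j_\ell = 1$ (which can occur only at $\ell=0$), the bound $\tilde{S}_1 < (\log N)^8$ serves the same purpose. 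Since $\tilde{S}_{j_\ell} < (\log N)^{8\ell+8} < T_{\ell+1}$, I get $j_{\ell+1} > j_\ell$ and
\[
\tilde{S}_{j_{\ell+1}}/\tilde{S}_{j_\ell} \geqslant T_{\ell+1}/(\log N)^{8\ell+8} = (\log N)^4.
\]
Setting $(r^{(\ell+1)}, (\nu^{(\ell+1)}_p)) := (R_{K-j_\ell}, M_{K-j_\ell})$ for $\ell=0,\ldots,L$ produces $J = L+1 \gg (\log N)/\log\log N$ elements, with (i) and (iii) inherited from the backward chain (argmax property and transitivity of $\prec$) and (ii) from the ratio bound just established.

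The main subtlety is the choice of level spacing $(\log N)^8$ rather than the more natural $(\log N)^4$. Because the forward-growth bound allows $\tilde{S}_{j+1}/\tilde{S}_j$ to be as large as $(\log N)^4$, using tighter level spacing would only guarantee ratios $>1$ between extracted elements; the additional factor of $(\log N)^4$ in the gap between levels provides exactly the slack needed to force the required ratio $\geqslant (\log N)^4$.
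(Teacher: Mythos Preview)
Your argument is correct. The paper's proof takes a somewhat different route: it constructs the sequence $(r^{(i)},(\nu_p^{(i)}))$ directly, working downward from index $J$. At each step from $j$ to $j-1$ it first locates a $\prec$-\emph{minimal} $(\mu_p)\prec(\nu_p^{(j)})$ with $\max_s|A(s,(\mu_p))|\geqslant(\log N)^{8j-4}$, and then applies the second hypothesis once more from $(\mu_p)$ to land at $(\mu_p')$. The minimality guarantees the upper bound $\max_s|A(s,(\mu_p'))|<(\log N)^{8j-4}$, which delivers (ii), while the two applications of the hypothesis give the lower bound $\geqslant(\log N)^{8(j-1)}$.

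Your approach is instead a two-stage extraction: first produce one long $\prec$-chain $(R_k,M_k)$ in which consecutive sizes differ by at most a factor $(\log N)^4$, then thin it to a subsequence at geometric size thresholds spaced by $(\log N)^8$. This is arguably more modular, and your observation that the spacing must be $(\log N)^8$ rather than $(\log N)^4$ (to absorb the possible overshoot at each threshold) plays the same role that the minimality trick plays in the paper's argument. Both proofs yield the same $J\gg(\log N)/\log\log N$; the paper's is slightly shorter since it builds the final sequence in a single pass, whereas yours separates ``build a fine chain'' from ``extract the coarse one'' and so makes each step individually simpler.
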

\begin{proof}
    Let $r_1^*$ be such that $|A(r_1^*,(\nu_p^*))|=\max_s|A(s,(\nu_p^*))|$ so that $|A(r_1^*,(\nu_p^*))|\geqslant |A(r^*,(\nu_p^*))|\geqslant N^{1/2}$ by assumption. Let us take $r^{(J)}=r^*_1$ and $(\nu_p^{(J)})=(\nu_p^*)$. This choice then clearly satisfies condition (i) as well as that $|A(r^{(J)},(\nu_p^{(J)}))|\geqslant (\log N)^{8J}$ for some $J\gg (\log N )/\log \log N$. Suppose now that we have created, for some $j\geqslant 1$, a sequence of $r^{(i)}$ and $(\nu_p^{(i)})$ for $j\leqslant i\leqslant J$ which satisfies (i), (ii) and (iii) and has the additional property that $|A(r^{(j)},(\nu_p^{(j)}))|\geqslant (\log N)^{8j}$. By the assumptions of the lemma, there must exist some $(\mu_p)\prec(\nu_p^{(j)})$ so that $\max_s|A(s,(\mu_p))|\geqslant (\log N)^{-4}|A(r^{(j)},(\nu_p^{(j)}))|\geqslant (\log N)^{8j-4}$, and we may assume in addition that $(\mu_p)$ is a minimal element with respect to the partial order $\prec$ which satisfies the inequality $\max_s|A(s,(\mu_p))|\geqslant (\log N)^{8j-4}$. 
    
    \medskip
    
    We choose $s_0$ such that $|A(s_0,(\mu_p))|=\max_s|A(s,(\mu_p))|$. Hence, $|A(s_0,(\mu_p))|\geqslant (\log N)^{8j-4}$. Since $j\geqslant 1$, the assumptions of the lemma again imply that there exists some $(\mu_p')\prec(\mu_p)$ with $$\max_s|A(s,(\mu_p'))|\geqslant (\log N)^{-4}|A(s_0,(\mu_p))|\geqslant(\log N)^{8(j-1)}.$$ Let $s_0'$ be chosen such that $|A(s_0',(\mu_p'))|=\max_s|A(s,(\mu_p'))|$. We now claim that taking $r^{(j-1)}=s_0'$ and $(\nu_p^{(j-1)})=(\mu_p')$ works. Clearly, (i) is satisfied and (iii) holds as $(\mu_p')\prec (\mu_p)\prec (\nu_p^{(j)})$. Note also that $|A(s_0',(\mu_p'))|=\max_s|A(s,(\mu_p'))|\geqslant (\log N)^{8(j-1)}$. Finally, the claimed upper bound $|A(s_0',(\mu_p'))|<(\log N)^{-4}|A(r^{(j)},(\nu_p^{(j)}))|$ in (ii) follows by the choice of $(\mu_p)$ (being a minimal element with respect to $\prec$ which satisfies $\max_s|A(s,(\mu_p))|\geqslant (\log N)^{8j-4}$) and because $(\mu_p')\prec(\mu_p)$.
\end{proof}
Corollary \ref{cor:largeA(s,mu)} and Proposition \ref{prop:distmodpstruct} show that if $A\subset\mathbf{Z}\setminus\{0\}$ is a set of size $N$ and $A\subset [-T,T]$ where $T\leqslant e^{O(\log N)^5}$, then either
\begin{itemize}
    \item $\lVert F_A\rVert_1\gg \log \log N$,
    \item or $A$ satisfies the assumptions of Lemma \ref{lem:Astrongstructure} with $Q_1=(\log N)^{1/2}$.
\end{itemize}
If the first alternative above holds, then the conclusion of Theorem \ref{th:mainL^1} follows, so it only remains to prove this conclusion assuming that the following properties from the conclusion of Lemma \ref{lem:Astrongstructure} are satisfied. There exist $r^{(i)}\in\prod_{p\leqslant Q_1}(\mathbf{Z}/p\mathbf{Z})^\times$ and $(\nu_p^{(i)})\in\mathbf{N}^{\pi(Q_1)}$ such that 
\begin{itemize}
    \item [(i)] $|A(r^{(i)},(\nu_p^{(i)}))|=\max_s|A(s,(\nu_p^{(i)}))|$,
    \item [(ii)] $|A(r^{(i+1)},(\nu_p^{(i+1)}))|\geqslant (\log N)^4|A(r^{(i)},(\nu_p^{(i)}))|$,
    \item [(iii)] $(\nu_p^{(i+1)})\succ (\nu_p^{(i)})$,
\end{itemize}
for all $i\in[J]$ and for some integer $J\gg (\log N)/\log \log N$. We show that under these assumptions, $\lVert F_A\rVert_1\gg J/\log\log N\gg(\log N)/(\log\log N)^{2}$ and this finishes the proof because such a bound also confirms Theorem \ref{th:mainL^1} (in fact with a significantly stronger bound).

\medskip

We take $Q=(\log N)^{20}$ and recall, using (iv) in Proposition \ref{prop:basicestimate}, that
$$\lVert F_A\rVert_1\gg \lVert c_A+R_Q\rVert_1/\log \log N,$$
where $c_A=\sum_{a\in A}c( ax)$ and $$R_Q(x)=\sum_{a\in A}\sum_{1<n\in \mathcal{R}_Q}\frac{\chi(n)}{n}c( nax)$$ and where $\mathcal{R}_Q$ is the set of $Q$-rough numbers. Hence, to complete the proof of Theorem \ref{th:mainL^1}, it suffices to show that 
\begin{equation}\label{eq:goodL^1}
\lVert c_A+R_Q\rVert_1 \gg  J
\end{equation}
assuming the existence of the sets $A(r^{(i)},(\nu_p^{(i)}))$ satisfying (i), (ii) and (iii) above.
To do this, we will use a test function $\Phi$ which satisfies $\langle c_A+R_Q,\Phi\rangle \gg J$ and $\lVert \Phi\rVert_\infty\ll 1$. Such a test function $\Phi$ will be constructed by `going up' in the residue classes $r^{(i)}\prod_{p\leqslant Q_1}p^{\nu_p^{(i)}}\md{\prod p^{\nu_p^{(i)}+1}}$. In order for such a construction to work, we analyse again what the relevant projections onto each of these residue classes look like. Since we have obtained $c_A+R_Q$ from $F_A$ by `sifting' out all primes below $Q=(\log N)^{20}$, rather than only those in $((\log N)^{1/2},(\log N)^{20}]t$ as in the proof of Proposition \ref{prop:distmodpstruct}, these projections are somewhat simpler. We claim that
\begin{align}\label{eq:projc_A+R_Q}
    &\Proj\bigl(c_A+R_Q;r^{(i)},(\nu_p^{(i)})\bigr)(x)=\frac{1}{2}\sum_{a\in A(r^{(i)},(\nu_p^{(i)}))\cup-A(-r^{(i)},(\nu_p^{(i)}))}e(ax)\\ &\!+\!\sum_s\!\sum_{a\in A(s,(\nu_p^{(i)}))}\!\left(\sum_{\substack{1<n\in\mathcal{R}_Q\\n\equiv rs^{-1}\mdsub{\prod p}}}\!\frac{\chi(n)}{2n}e(anx)\!+\sum_{\substack{1<n\in\mathcal{R}_Q\\n\equiv -rs^{-1}\mdsub{\prod p}}}\!\frac{\chi(n)}{2n}e(-anx)\right).\nonumber
\end{align}
These projections of $c_A+R_Q$ are simpler to analyse than those of $F_\textnormal{med}$ in the proof of Proposition \ref{prop:distmodpstruct} in the sense that, here, only those $A(s,(\mu_p))$ with $(\mu_p)=(\nu_p^{(i)})$ contribute, rather than all $A(s,(\mu_p))$ with $(\mu_p)\preceq(\nu_p^{(i)})$. To see why \eqref{eq:projc_A+R_Q} holds, note that $\Proj(c_A;r^{(i)},(\nu_p^{(i)}))$ is precisely the first term on the right hand side. Further, if $a\in A$ and $1<n\in\mathcal{R}_Q$ are such that $\frac{\chi(n)}{n}c( nax)$ contributes to $\Proj(R_Q;r^{(i)},(\nu_p^{(i)}))$, then $na\equiv \pm r^{(i)}\prod_{p\leqslant Q_1}p^{\nu_p^{(i)}}\md{\prod_{p\leqslant Q_1}p^{\nu_p^{(i)}+1}}$ and hence, since $n$ is $Q$-rough and $Q_1=(\log N)^{1/2}<Q$, we need that $a\in \bigcup_s A(s,(\nu_p^{(i)}))$. We finally observe that if $a\in A(s,(\nu_p^{(i)}))$, then $\frac{\chi(n)}{n}c( nax)$ contributes if and only if $ns\equiv \pm r^{(i)}\md{\prod_{p\leqslant Q_1}p}$. 

\medskip

The second term on the right hand side of \eqref{eq:projc_A+R_Q} is of a form that we have studied in Lemma \ref{lem:generalcont} which in this situation states exactly that we may write
\begin{align}\label{eq:niceprojc_A+R_Q}
    \Proj(c_A+R_Q;r^{(i)},(\nu_p^{(i)}))=\frac{1}{2}\sum_{a\in A(r^{(i)},(\nu_p^{(i)}))\cup-A(-r^{(i)},(\nu_p^{(i)}))}e(ax) + E^{(i)}(x),
\end{align} for each $i\in [J]$ and where 
\begin{align*}
    \lVert E^{(i)}*V_T\rVert_2\ll(\log N)^{-2}\left(\max_s|A(s,(\nu_p^{(i)}))|\right)^{1/2}=(\log N)^{-2}|A(r^{(i)},(\nu_p^{(i)}))|^{1/2},
\end{align*}
by property (i) of these sets $A(r^{(i)},(\nu_p^{(i)}))$. Since we are also assuming that $A\subset[-T,T]$, we have that $c_A*V_T=c_A$ so that Young's convolution inequality (and that $\lVert V_T\rVert_1\ll 1$) gives $$\lVert c_A+R_Q\rVert_1\gg \lVert (c_A+R_Q)*V_T\rVert_1=\lVert c_A+R_Q*V_T\rVert_1.$$  Our final task was to prove \eqref{eq:goodL^1} assuming the existence of the sets $A(r^{(i)},(\nu_p^{(i)}))$ which satisfy (i), (ii) and (iii) above. It therefore suffices to show that $\lVert c_A+R_Q*V_T\rVert_1\gg  J$ and this we will deduce from the following theorem.
\begin{theorem}[non-Archimedean variant of the McGehee-Pigno-Smith construction]
Let $B\subset \mathbf{Z}$ be finite and suppose that for each $i\in[J]$ there exist $q_i\in\mathbf{N}$ and $r_i\in\mathbf{Z}/q_i\mathbf{Z}$ for which the following conditions hold.
\begin{itemize}
    \item [(i)] Let $B_i=B\cap\{n\in\mathbf{Z}:n\equiv r_i\mdlem{q_i}\}$. Assume that $|B_{i+1}|>10|B_i|$.
    \item [(ii)] Let $q_1|q_2|\dots|q_J$ and assume that the residue classes $\{n\in\mathbf{Z}:n\equiv r_i\mdlem{q_i}\}$ are pairwise disjoint.
\end{itemize}
If $E\in L^1(\mathbf{T})$ is a function such that $\lVert \Proj(E; r_i\mdlem{q_i})\rVert_2\leqslant |B_i|^{1/2}/10$, then $\lVert \hat{1}_B+E\rVert_1\gg  J$.
\end{theorem}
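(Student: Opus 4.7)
The plan is to adapt the McGehee-Pigno-Smith construction from Lemma \ref{lem:basicMPS}, where the key new ingredient is a spectral localization argument that plays the role of the one-sidedness assumption in the classical MPS proof. Define $g_i:\mathbf{Z}\to\mathbf{C}$ by $g_i(n)=|B_i|^{-1}$ for $n\in B_i$ and $g_i(n)=0$ otherwise. Set $Q_k(x)=e^{-|\hat{g}_k(x)|}$ and
$$\Phi_J=\sum_{i=1}^J\hat{g}_iP_i,\quad P_i\vcentcolon=\prod_{k=i+1}^JQ_k,$$
so that, by Lemma \ref{lem:basicMPS} and $\|\hat{g}_i\|_\infty\leqslant 1$, we have $\|\Phi_J\|_\infty\leqslant 10$. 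It then suffices to prove $\langle \hat{1}_B+E,\Phi_J\rangle\gg J$.

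The crucial observation is a spectral localization: the function $|\hat{g}_k(x)|^2=|B_k|^{-2}\sum_{b,b'\in B_k}e((b-b')x)$ has Fourier spectrum in $B_k-B_k\subseteq q_k\mathbf{Z}$, hence is a function of $q_kx$. Taking positive square roots keeps it a function of $q_kx$, and so does composing with $\exp$; thus $Q_k$ is a function of $q_kx$ and in particular has Fourier spectrum in $q_k\mathbf{Z}$. By hypothesis (ii), $q_{i+1}$ divides $q_k$ for each $k>i$, so each such $Q_k$ is also a function of $q_{i+1}x$; therefore $P_i$ has Fourier spectrum in $q_{i+1}\mathbf{Z}\subseteq q_i\mathbf{Z}$. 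Combined with $\hat{g}_i$ being supported in $B_i\subseteq r_i+q_i\mathbf{Z}$, we conclude that $\hat{g}_iP_i$ has Fourier spectrum inside $r_i+q_i\mathbf{Z}$.

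This localization allows us to replace $\hat{1}_B+E$ by its projection onto the class $r_i\mdlem{q_i}$ when pairing against $\hat{g}_iP_i$; by the disjointness condition (ii), $\Proj(\hat{1}_B;r_i\mdlem{q_i})=\hat{1}_{B_i}$, so
$$\langle \hat{1}_B+E,\hat{g}_iP_i\rangle=\langle \hat{1}_{B_i},\hat{g}_iP_i\rangle+\langle \Proj(E;r_i\mdlem{q_i}),\hat{g}_iP_i\rangle.$$
The second term is handled by Cauchy-Schwarz and the bound $\|\hat{g}_iP_i\|_2\leqslant\|P_i\|_\infty\|\hat{g}_i\|_2\leqslant|B_i|^{-1/2}$, yielding $\leqslant (|B_i|^{1/2}/10)|B_i|^{-1/2}=1/10$. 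For the first term, write $P_i=1-(1-P_i)$: the main contribution is $\langle \hat{1}_{B_i},\hat{g}_i\rangle=1$, while the error is controlled by $|1-P_i|\leqslant \sum_{k>i}|\hat{g}_k|$, Cauchy-Schwarz $\int|\hat{1}_{B_i}||\hat{g}_i||\hat{g}_k|\leqslant \|\hat{g}_i\|_\infty\|\hat{1}_{B_i}\|_2\|\hat{g}_k\|_2\leqslant |B_i|^{1/2}|B_k|^{-1/2}$, and the growth hypothesis (i), summing to at most $\sum_{m\geqslant 1}10^{-m/2}<1/2$. Each summand is therefore $\geqslant 1-1/10-1/2>1/3$, and summing gives $\langle \hat{1}_B+E,\Phi_J\rangle\geqslant J/3$, hence $\|\hat{1}_B+E\|_1\geqslant (J/3)/10\gg J$.

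The main obstacle is the spectral localization step; although elementary once noticed, it is precisely the non-Archimedean substitute for the one-sidedness of the Fourier spectrum in the classical McGehee-Pigno-Smith construction, and in our setup it emerges cleanly from the disjointness and divisibility structure of the residue classes.
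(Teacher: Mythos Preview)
Your proof is correct and follows essentially the same approach as the paper: the same test function $\Phi_J$, the same spectral localization (the paper phrases it as $\hat{g}_k(x+1/q_k)=e(r_k/q_k)\hat{g}_k(x)$ so that $|\hat{g}_k|$ and hence $Q_k$ is $1/q_k$-periodic, which is equivalent to your observation about $|\hat{g}_k|^2$), and the same Cauchy--Schwarz bounds using the growth condition $|B_{i+1}|>10|B_i|$. The only cosmetic difference is that the paper fully expands $1-P_i=\sum_{k>i}(1-Q_k)Q_{k+1}\cdots Q_J$ via telescoping before bounding, whereas you pass directly to $|1-P_i|\leqslant\sum_{k>i}|\hat{g}_k|$; both lead to the same estimates.
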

\begin{proof}
    We employ the same basic McGehee-Pigno-Smith construction as per usual and take $$g_i:\mathbf{Z}\to\mathbf{C}:g_i(n)=|B_i|^{-1}1_{n\in B_i}.$$
    Then we define $Q_i(x)=e^{-|\hat{g}_i(x)|}$ and $$\Phi_j(x)=\hat{g}_j+\hat{g}_{j-1}Q_j+\dots+\hat{g}_1Q_2\dots Q_j.$$ Lemma \ref{lem:basicMPS} states that $\lVert \Phi_j\rVert_1\leqslant 10$ for all $j$. We shall need to prove some new properties of these test functions relating to the divisibility properties of its Fourier spectrum.
    \begin{lemma}\label{lem:modularMPS}
We have that $\supp\left(\hat{g}_iQ_{i+1}\dots Q_j\right)^\wedge \subset \{n\in \mathbf{Z}: n\equiv r_i\mdlem{q_i}\}$ for all $j>i$.
\end{lemma}
\begin{proof}[Proof of Lemma \ref{lem:modularMPS}]
By definition, $\supp(g_i)=B_i$ and hence $\supp(\hat{g}_iQ_{i+1}\dots Q_j)^\wedge\subset B_i+\sum_{k\in[i+1,j)}\supp(\hat{Q}_k)$. It therefore suffices to show that $\supp(\hat{Q}_k)\subset q_k\cdot\mathbf{Z}$ for all $k\in [J]$ since $q_1|q_2|\dots|q_J$. For this, we can simply observe that $\hat{g}_k(x+1/q_k)=e(r_k/q_k)\hat{g}_k(x)$ because $\supp(g_k)=B_k\subset\{n:n\equiv r_k\md{q_k}\}$ by assumption. Hence, $|\hat{g}_k|$ is a $1/q_k$-periodic function and so is $Q_k=e^{-|\hat{g}_k|}$, implying that the Fourier spectrum of $Q_k$ consists of multiples of $q_k$ only.
\end{proof}
Since $\lVert \Phi_J\rVert_1\leqslant 10$, we can obtain a lower bound
\begin{align*}
\lVert \hat{1}_B+E\rVert_1&\gg \langle \hat{1}_B+E,\Phi_J\rangle\\
&=\sum_{j=1}^J\langle \hat{1}_B, \hat{g}_j\rangle - \sum_{1\leqslant j<k\leqslant J}\langle \hat{1}_B,\hat{g}_jQ_{j+1}\dots Q_{k-1}(1-Q_k)\rangle + \langle E,\Phi_J\rangle.
\end{align*}
By Parseval, $\langle \hat{1}_B, \hat{g}_j\rangle = 1$ for all $j$ so the first term above contributes $J$. By Lemma \ref{lem:modularMPS}, we have $$\langle \hat{1}_B,\hat{g}_jQ_{j+1}\dots Q_{k-1}(1-Q_k)\rangle =\langle \Proj(\hat{1}_B; r_j\md{q_j}),\hat{g}_jQ_{j+1}\dots Q_{k-1}(1-Q_k)\rangle$$ and hence, using that by Lemma \ref{lem:basicMPS} we have the inequalities $|Q_{j'}|\leqslant 1$, $|1-Q_k|\leqslant |\hat{g}_k|$, and $|\hat{g}_j|\leqslant 1$, we can use Cauchy-Schwarz to bound the second term by
\begin{align*}
    \sum_{1\leqslant j<k\leqslant J} \lVert \Proj(\hat{1}_B;r_j\md{q_j})\rVert_2\lVert \hat{g}_k\rVert_2 &= \sum_{1\leqslant j<k\leqslant J} |B_j|^{1/2}|B_k|^{-1/2}\\
    &\leqslant \sum_{1\leqslant j<k\leqslant J} 10^{-(k-j)/2}\leqslant J/(10^{1/2}-1),
\end{align*}
where we used the assumption that $|B_{i+1}|>10|B_i|$. Hence, $\lVert \hat{1}_B+E\rVert_1\gg J/2-\langle E,\Phi_J\rangle$.

\medskip

To finish the proof, we therefore only need to show that $\langle E,\Phi_J\rangle \leqslant J/10$. This will follow from the fact that $\langle E,\hat{g}_jQ_{j+1}\dots Q_J \rangle$ has size at most $1/10$ for all $j$. One can prove this by a single application of Cauchy-Schwarz:
\begin{align*}|\langle E, \hat{g}_jQ_{j+1}\dots Q_{J}\rangle|&=|\langle \Proj(E; r_j\md{q_j}), \hat{g}_jQ_{j+1}\dots Q_J\rangle|\\
&\leqslant \lVert \Proj(E; r_j\md{q_j})\rVert_2\lVert \hat{g}_j\rVert_2\leqslant 1/10\end{align*} where we used that $\lVert \hat{g}_j\rVert_2=|B_j|^{-1/2}$ and that $\lVert \Proj(E; r_j\md{q_j})\rVert_2 \leqslant |B_j|^{1/2}/10$ by assumption.

\end{proof}
We apply this theorem with the set $B=A\cup -A$, moduli $q_i= \prod_{p\leqslant Q_1} p^{\nu_p^{(i)}+1}$, and residues $r_i=r^{(i)} \prod_{p \leqslant Q_1}p^{\nu_p^{(i)}}$. We also take $E= R_Q*V_T$ so that by \eqref{eq:niceprojc_A+R_Q} and the inequality right after, we get that
\begin{align*}
    B_i&= A(r^{(i)}, (\nu_p^{(i)})) \cup -A(-r^{(i)},(\nu_p^{(i)}))\\
    \lVert \Proj(E; r_i\md{q_i})\rVert_2 &=\lVert E^{(i)}*V_T\rVert_2\ll (\log N)^{-2}|B_i|^{1/2}
\end{align*} and note furthermore that $q_1|q_2|\dots|q_J$ by property (iii). It is also immediate from property (ii) that $|B_{i+1}|\gg (\log N)^4|B_i|$ for all $i<J$. The theorem above now confirms the claimed bound \eqref{eq:goodL^1}: $\lVert c_A+R_Q\rVert_1\gg J\gg (\log N)/\log\log N$.
\section{The global structure of sets with $S(A)\leqslant N/3+C$}
The methods that we have introduced to prove Theorem \ref{th:main} can be exploited further to provide structural information about sets of integers $A$ with $S(A)\leqslant N/3+C$ for values of $C$ much larger than $\log \log N$. In this section, we shall focus on proving Theorem \ref{th:mainstructure}, although there also are other types of `structure' that one may provably find in $A$.
\begin{proposition}
    Let $A\subset\mathbf{Z}\setminus \{0\}$ have size $N$ and let $S(A)\leqslant N/3 +C$. Then there exists a set $B$ which is $F_4$-isomorphic to $A$ and which is contained in $[-T,T]$ where $T\leqslant N^{C^{O(1)}}$. Moreover, every subset $X\subset A$ satisfies the energy bound $E(X)\gg C^{-O(1)}\frac{|X|^4}{N}$.
\end{proposition}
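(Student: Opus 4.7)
The proposition asserts two things---a short interval $F_4$-model $B\subset[-N^{C^{O(1)}},N^{C^{O(1)}}]$ and the pointwise energy bound $E(X)\gg C^{-O(1)}|X|^4/N$ for every $X\subset A$---and my plan is to derive both by bootstrapping the inverse results of Section~5 on top of the crude model supplied by Corollary~\ref{cor:densemodel}. The base of the iteration is immediate: from Propositions~\ref{prop:basicestimate} and~\ref{prop:sumfreeL^1} one has $\lVert\hat 1_A+\hat 1_{-A}\rVert_1\ll C\log N$, Corollary~\ref{cor:sum-freesmalldim} converts this to $\dim(A)\ll C^2(\log N)^3$, and Theorem~\ref{th:densemodel} with $\ell=4$ then places $A$ inside an interval $[-T_1,T_1]$ with $T_1\leqslant e^{O((C\log N)^4)}$.

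I would iterate by re-running the argument of Sections~6--8 with $A_1\subset[-T_1,T_1]$ in place of $A$, carefully tracking whether each logarithmic loss reflects the cardinality $N$ or the current Freiman diameter $T_k$. The key technical observation is that the logarithmic factors in Lemmas~\ref{lem:coefbound1} and~\ref{lem:generalcont} and in the non-Archimedean construction of Section~8 really depend on $\log T_k$ rather than on $\log N$, because the standing assumption in those lemmas is only $A\subset[-T,T]$ with $T\leqslant e^{O((\log N)^5)}$. Shrinking the ambient interval to $T_k$ therefore shrinks these losses; the resulting improvement to the $L^1$ bound on $\hat 1_{A_k}+\hat 1_{-A_k}$ feeds back through Corollary~\ref{cor:sum-freesmalldim} to a smaller dimension bound, and Theorem~\ref{th:densemodel} returns a denser model with a smaller $T_{k+1}$. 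After $O(1)$ such iterations the Freiman diameter should stabilise at $T\leqslant N^{C^{O(1)}}$, establishing the first claim.

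The energy bound is then a corollary. At the end of the bootstrap one should have the sharper inequality $\lVert\hat 1_B+\hat 1_{-B}\rVert_1\ll C^{O(1)}$---free of any $\log N$ factor, since the outer logarithm in the dimension estimate has been absorbed---and applying Corollary~\ref{cor:smallL^1largeenergy} with $f=1_B+1_{-B}$ to any $X\subset B$ yields $E(X)\gg C^{-O(1)}|X|^4/|B|$, which transfers back to $A$ by the $F_4$-invariance of additive energy.

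The main obstacle is the bootstrap itself. Verifying that every $\log N$ in Sections~6--8 really is controlled by the current diameter $T_k$ and not by $|A|$ requires a careful inspection of the projection estimates and of the greedy chain produced by Lemma~\ref{lem:Astrongstructure}. The most delicate point will be ensuring that the implicit constants in the various $O(\cdot)$ exponents stay bounded over the $O(1)$ iterations, so that the Freiman diameter really does converge geometrically at each step rather than slowly drifting upwards and preventing the loop from closing.
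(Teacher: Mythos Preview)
Your bootstrap cannot close, because the $\log N$ you are trying to remove is not a diameter effect. The bound $\lVert\hat 1_{A}+\hat 1_{-A}\rVert_1\ll C\log N$ in Proposition~\ref{prop:sumfreeL^1} comes from choosing $Q\asymp|A|^2$ so that $\lVert R_Q\rVert_2\leqslant|A|/Q^{1/2}\leqslant 1/10$; the resulting $\log Q\asymp\log|A|=\log N$ depends on the cardinality, not on $\max_{a\in A}|a|$, and no amount of shrinking the Freiman model reduces it. Lemmas~\ref{lem:coefbound1} and~\ref{lem:generalcont} do scale with $\log T$, but they control Fourier coefficients and the $L^2$-norm of $R_Q$, not $\lVert c_A\rVert_1$, and re-running Sections~7--8 only produces \emph{lower} bounds $\lVert F_A\rVert_1\gg\log\log N$, not improved upper bounds on $\lVert c_{A_k}\rVert_1$. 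So the iteration stalls at $\dim(A_k)\ll C^2(\log N)^3$ for every $k$, and your endpoint claim $\lVert\hat 1_B+\hat 1_{-B}\rVert_1\ll C^{O(1)}$ is unsupported.

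The paper bypasses this by never separating $c_{A'}$ from $R_Q$. After a single use of Corollary~\ref{cor:densemodel} to place $A'$ in $[-T_1,T_1]$ with $T_1=e^{O((C\log N)^4)}$, it takes a \emph{small} $Q=(C\log N)^{10}$ and applies Theorem~\ref{th:smallL^1smalldim} and Corollary~\ref{cor:smallL^1largeenergy} directly to $f=1_{A'}+1_{-A'}+2\hat R_Q$. Item~(iv) of Proposition~\ref{prop:basicestimate} gives $\lVert\hat f\rVert_1\ll(\log Q)\lVert F_{A'}\rVert_1\ll C^2$ (using $\log\log N\ll C$), while Lemma~\ref{lem:coefbound1} gives $|\hat R_Q(a)|\ll(\log T_1)/Q$, so $f(a)\geqslant 1/2$ on $A'$. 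This yields $\dim(A')\ll C^4\log N$ (hence $T\leqslant N^{C^{O(1)}}$ via Theorem~\ref{th:densemodel}) and the energy bound in a single step, with $\lVert\hat f*V_{T_1}\rVert_2^2\ll N$ supplied by Lemma~\ref{lem:generalcont}. The inverse theorems of Section~5 were stated for general $f$, not just for $1_A$, precisely to enable this move.
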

\begin{proof}
By Corollary \ref{cor:densemodel}, we may find an $F_4$-isomorphic copy $A'$ of $A$ with $A'\subset[-T_1,T_1]$ where $T_1\leqslant e^{O((C\log N)^4)}$. By taking $Q=(C\log N)^{10}$ (say) in Proposition \ref{prop:basicestimate}, we find that $\lVert c_{A'}+R_Q\rVert_1 \ll (\log Q)\lVert F_{A'}\rVert_1\ll (\log Q)(S(A')-N/3)\ll C^2$ where we have used in the final inequality that $\log Q\ll C$ since $C\gg \log \log N$. In fact, it will be convenient to note that the exact same proof from Proposition \ref{prop:basicestimate} provides an analogous bound for every $L^p$-norm:
$$\lVert c_{A'}+R_Q\rVert_p \ll (\log Q)\lVert F_{A'}\rVert_p.$$ We know from Lemma \ref{lem:coefbound1} that $$R_Q(x)=\sum_{a\in A'}\sum_{1<n\in \mathcal{R}_Q}\frac{\chi(n)}{n}c( nax)$$ satisfies the bound $|\hat{R}_Q(m)|\ll (\log T)Q^{-1}\ll (C\log N)^{-5}$ for its Fourier coefficients with frequencies $m\in [-T_1,T_1]$. In particular, if we define $f(n)\vcentcolon = 1_{A'}(n)+1_{-A'}(n)+2\hat{R}_Q(n)$, then we have shown that 
\begin{itemize}
    \item $\lVert \hat{f}\rVert_1 \ll C^2$,
    \item $f(a)\geqslant 1/2$ for all $a\in A'$. 
\end{itemize}
One can also see by taking $p=\infty$ in the $L^p$-inequality above and by the trivial bound $|F_{A'}(x)|\leqslant N\max_x |\phi(x)-1/3|\leqslant N$ that $\lVert c_{A'}+R_Q\rVert_\infty \ll N C$, and hence we certainly have $\lVert \hat{f}\rVert_\infty \ll N^2$. Theorem \ref{th:smallL^1smalldim} states under these assumptions on $f$ that $\dim(A')\ll C^4(\log N)$. One may now simply use Theorem \ref{th:densemodel} to find the desired `dense' $F_4$-isomorphic copy $B\subset [-N^{C^{O(1)}},N^{C^{O(1)}}]$ (again using that $\log \log N\ll C$).

\medskip

Let $\psi:A\to A'$ be the $F_4$-isomorphism from above. For any subset $X\subset A$ we observe that $E(X)=E(\psi(X))$ precisely because $\psi$ preserves all additive relations of length at most $4$. We write $X'=\psi(X)$ and to establish the final part of this proposition, it suffices to show that $E(X')\gg C^{-O(1)}\frac{|X'|^4}{N}$. Note that the function $f=1_{A'}+1_{-A'}+2\hat{R}_Q$ from above retains its crucial properties after convolving with the de la Vall\'ee-Poussin kernel $V_{T_1}$:
\begin{itemize}
    \item $\lVert \hat{f}*V_{T_1}\rVert_1\ll C^2$,
    \item $(\hat{f}*V_{T_1})^\wedge(a)\geqslant 1/2$ for all $a\in A'$, since $A'\subset[-T_1,T_1]$.
\end{itemize}
Hence, $\hat{f}*V_{T_1}$ satisfies the assumptions of Corollary \ref{cor:smallL^1largeenergy} and we deduce that $$E(X')\gg C^{-O(1)}\frac{|X'|^4}{\lVert \hat{f}*V_{T_1}\rVert_2^2}.$$ Hence, our final task is to find a good bound for the $L^2$-norm of $(c_{A'}+R_Q)*V_{T_1}=c_{A'}+R_Q*V_{T_1}$. By Parseval, $\lVert c_{A'}\rVert_2\ll N^{1/2}$. Finally, an even stronger bound $\lVert R_Q*V_{T_1}\rVert_2\ll (\log N)^{-2}N^{1/2}$ may be deduced from Lemma \ref{lem:generalcont} (with $Q_1=1$).
\end{proof}
With a result like the proposition above in hand, which shows that any large subset of $A$ has large additive energy, one can invoke the standard tools of additive combinatorics to obtain Theorem \ref{th:mainstructure}; we briefly indicate how this is done. We need two central results from additive combinatorics. The first is the Balog-Szemer\'edi-Gowers Theorem, see \cite[Theorem 2.27]{taovubook}
\begin{theorem}[Balog-Szemer\'edi-Gowers Theorem]\label{th:balog}
    Let $K\geqslant 1$ and let $B\subset\mathbf{Z}$ have additive energy $E(B)\geqslant |B|^3/K$. Then there exists a subset $B'\subset B$ of size $|B'|\gg |B|/K^{O(1)}$ with small doubling $|B'-B'|\ll K^{O(1)}|B|$.
\end{theorem}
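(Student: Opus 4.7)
The plan is to establish the Balog--Szemer\'edi--Gowers theorem via the standard two-step strategy: a combinatorial graph-theoretic lemma followed by a sumset-inequality argument. I would not try to reprove the full theorem from scratch but rather organise the proof around these two modular pieces.

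First, I would extract popular differences from the energy hypothesis. Writing $r_B(d) = \#\{(b_1,b_2) \in B \times B : b_1 - b_2 = d\}$, so that $E(B) = \sum_d r_B(d)^2 \geqslant |B|^3/K$, a dyadic pigeonhole together with the trivial bound $\sum_d r_B(d) = |B|^2$ shows that the set $P$ of popular differences with $r_B(d) \gg |B|/K$ accounts for $\gg |B|^3/K$ of the energy. Forming the bipartite graph $G$ on $B \times B$ whose edges are pairs $(b_1, b_2)$ with $b_1 - b_2 \in P$, we see that $G$ has $\gg |B|^2/K$ edges.

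The core of the proof, and what I consider the main obstacle, is the purely combinatorial Balog--Szemer\'edi--Gowers graph lemma: any bipartite graph on $n+n$ vertices with at least $n^2/K$ edges contains subsets $X,Y$ of size $\gg n/K^{O(1)}$ such that every pair $(x,y)\in X\times Y$ is joined by $\gg n^3/K^{O(1)}$ paths of length three. I would prove this via \emph{dependent random choice}: sample a vertex $z$ uniformly at random, let $X$ be its neighbourhood, and note that $\mathbb{E}|X| \gg n/K$. A second-moment calculation bounds the expected number of pairs $(x,y)\in X\times X$ that are joined by few paths of length three; deleting one endpoint of each such bad pair produces the desired subsets, and an additional averaging step yields both $X$ and $Y$.

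Applying the graph lemma to $G$ gives subsets $B_1, B_2 \subset B$ with $|B_1|,|B_2|\gg |B|/K^{O(1)}$ such that every pair $(b_1,b_2)\in B_1\times B_2$ admits $\gg |B|^3/K^{O(1)}$ decompositions $b_1 - b_2 = (b_1 - x) - (x - y) + (y - b_2)$ whose three constituents lie in $P$ and hence each have $\gg |B|/K$ representations as differences in $B-B$. Multiplying out, every element of $B_1 - B_2$ has $\gg |B|^5/K^{O(1)}$ representations as $a_1 - a_2 + a_3 - a_4 + a_5 - a_6$ with $a_i \in B$, giving $|B_1 - B_2| \cdot |B|^5/K^{O(1)} \leqslant |B|^6$ and hence $|B_1 - B_2| \ll K^{O(1)}|B|$. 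Finally, the Ruzsa triangle inequality $|B_1 - B_1|\cdot |B_2| \leqslant |B_1 - B_2|\cdot |B_2 - B_1|$ converts this into the doubling bound $|B_1 - B_1| \ll K^{O(1)}|B_1|$, so that $B' \vcentcolon= B_1$ satisfies the conclusion.
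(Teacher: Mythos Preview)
The paper does not prove this theorem at all: it simply quotes the Balog--Szemer\'edi--Gowers theorem as a standard black box from additive combinatorics, citing \cite[Theorem 2.27]{taovubook}. There is no ``paper's own proof'' to compare against.

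Your outline is the standard modern proof of the theorem (popular differences, the BSG graph lemma via dependent random choice, path-counting to bound $|B_1-B_2|$, and Ruzsa's triangle inequality to pass to $|B_1-B_1|$), and it is correct as sketched. Since the paper treats the result as a citation rather than something to be established, your write-up goes well beyond what is needed here; a one-line reference would match the paper's treatment.
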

Freiman's theorem \cite[Theorem 5.44]{taovubook} describes the structure of sets $B$ with small doubling, stating that they must essentially be dense subsets of generalised arithmetic progressions. A generalised arithmetic progression of dimension $d$ is any set of the form $P=\{x_0+\sum_{j=1}^d n_jx_j:n_j\in\{0,1,\dots,L_j-1\}\}$. We say that $P$ is a proper progression if all the elements $x_0+\sum_{j=1}^d n_jx_j$ are pairwise distinct for $n_j\in\{0,1,\dots,L_j-1\}$, in which case $P$ has size $|P|= \prod_j L_j$. 
\begin{theorem}[Freiman's Theorem]\label{th:freiman}
    Let $K\geqslant 1$ and let $B\subset\mathbf{Z}$ be a set with doubling $|B-B|\leqslant K|B|$. Then there exists a proper generalised arithmetic progression $P$ of dimension $d$ such that $B\subset P$ and such that
    \begin{itemize}
        \item the dimension of $P$ is bounded by $d\leqslant K^{O(1)}$,
        \item $B$ is `dense' in $P$ in the sense that $|B|\geqslant e^{-K^{O(1)}}|P|$.
    \end{itemize}
\end{theorem}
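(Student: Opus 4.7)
The plan is to follow the classical route to Freiman's theorem via Plünnecke--Ruzsa, Ruzsa's modelling lemma, Bogolyubov/Chang in a finite cyclic group, and the Ruzsa covering lemma. First I would use the Plünnecke--Ruzsa inequality to upgrade the hypothesis $|B-B|\leqslant K|B|$ into bounds on iterated sumsets, obtaining $|mB-nB|\leqslant K^{m+n}|B|$ for all non-negative integers $m,n$. The constant $8$ is convenient: we get $|8B-8B|\leqslant K^{16}|B|$, which will allow us to work with Freiman $8$-isomorphisms.

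The second step is Ruzsa's modelling lemma. Applied to $B$ with parameter $8$, it produces a subset $B_0\subset B$ with $|B_0|\gg |B|$ and a Freiman $8$-isomorphic image $B_1\subset \mathbf{Z}/N\mathbf{Z}$ (for a prime $N\ll K^{O(1)}|B|$) such that $B_1$ has density $\gg K^{-O(1)}$ in $\mathbf{Z}/N\mathbf{Z}$. In this dense model I can apply harmonic analysis. Using Chang's quantitative version of Bogolyubov's lemma on the large spectrum of $1_{B_1}$, the iterated sumset $2B_1-2B_1$ contains a Bohr set $\mathrm{Bohr}(\Gamma,\rho)$ with $|\Gamma|\ll K^{O(1)}$ and $\rho\geqslant 1/4$. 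Inside this Bohr set, the geometry of numbers (Minkowski's second theorem applied to the lattice of integer relations on $\Gamma$) yields a proper generalised arithmetic progression $P'\subset 2B_1-2B_1$ of dimension $d\ll K^{O(1)}$ and size $|P'|\geqslant e^{-K^{O(1)}}N$.

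The third step is to return to $\mathbf{Z}$ and cover $B$ by translates of $P'$. Pulling $P'$ back via the Freiman isomorphism gives a proper GAP $P''\subset 2B_0-2B_0$ of the same dimension and comparable size. Because $|B+P''|\leqslant |3B-2B|\leqslant K^{O(1)}|B|\leqslant K^{O(1)}|P''|$, the Ruzsa covering lemma produces a set $T\subset\mathbf{Z}$ of size $|T|\leqslant K^{O(1)}$ with $B\subset T+P''-P''$. Since $T$ lies in a progression of dimension $\leqslant\log_2|T|\leqslant O(\log K)$ (think of $T$ as embedded in $\{0,1\}^{|T|}$), this realises $B$ inside a GAP of dimension $\ll K^{O(1)}$ and size $\leqslant e^{K^{O(1)}}|B|$. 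A final cleanup, using a standard result that any GAP is contained in a proper GAP of comparable size and dimension (e.g.\ by iteratively finding and quotienting out a short relation among the generators), produces the required proper progression $P$.

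The main obstacle in this plan is the third step: obtaining a \emph{proper} GAP of dimension only $K^{O(1)}$ (not an exponential like $2^{K}$). The naive application of Bogolyubov gives $|\Gamma|$ exponential in $K$, and one has to invoke Chang's theorem on the structure of the large spectrum of $1_{B_1}$ to bring the dimension down to polynomial in $K$. The density claim $|B|\geqslant e^{-K^{O(1)}}|P|$ is tight only with this polynomial dimension, so Chang's bound is the genuinely non-trivial input; the covering and properness arguments are standard bookkeeping by comparison.
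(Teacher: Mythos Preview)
The paper does not prove this theorem; it is quoted as a standard result with a reference to \cite[Theorem 5.44]{taovubook}, and the authors immediately remark that neither this nor the Balog--Szemer\'edi--Gowers theorem is stated with the best known dependence on $K$. So there is nothing in the paper to compare your argument against.

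Your sketch is the standard Ruzsa--Chang route (Pl\"unnecke--Ruzsa, Ruzsa modelling, Bogolyubov with Chang's spectral bound, geometry of numbers inside the Bohr set, pullback, Ruzsa covering, properness cleanup) and is essentially correct. One small slip: embedding the covering set $T$ in $\{0,1\}^{|T|}$ yields a progression of dimension $|T|$, not $\log_2|T|$; your parenthetical hint contradicts your stated bound. This does not damage the conclusion, since $|T|\leqslant K^{O(1)}$ is already polynomial in $K$, but the claimed dimension $O(\log K)$ for the cube containing $T$ is wrong as written.
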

We have not stated either theorem with the best currently known quantitative dependence on $K$.
\begin{proof}[Proof of Theorem \ref{th:mainstructure}]
Suppose that we have obtained a partial structured decomposition $A=(\cup_{i<j}A_i)\cup B$ where each $A_i$ has size $|A_i|\gg (CK)^{-O(1)}N$ and doubling $|A_i-A_i|\ll (CK)^{O(1)}$. Freiman's Theorem implies that $A_i$ is contained in some generalised arithmetic progression $P_i$ of dimension at most $(CK)^{O(1)}$ and size $|P_i|\ll e^{(CK)^{O(1)}}|A_i|$. The set $B$ either has size $|B|<(CK)^{-10}N$ in which case we have found the desired decomposition of $A$, or else the energy bound from the proposition above yields
$$E(B)\gg C^{-O(1)}\frac{|B|^4}{N}\gg (CK)^{-O(1)}|B|^3.$$ The Balog-Szemer\'edi-Gowers theorem tells us that $B$ contains a subset $B'$ of size $|B'|\geqslant (CK)^{-O(1)}|B|\gg (CK)^{-O(1)}N$ with small doubling $|B'-B'|\ll (CK)^{O(1)}$. So we take $A_j\vcentcolon= B'$ and we have obtained a new decomposition $A=(\cup_{i\leqslant j}A_i)\cup B^*$, where $A_j$ satisfies the same properties as the $A_i$ with $i<j$ and where $|B|-|B^*|\gg (CK)^{-O(1)}N$. Clearly this process terminates (in fact after at most $(CK)^{O(1)}$ steps), giving the desired decomposition of $A$.
\end{proof}
\appendix
\section{The McGehee-Pigno-Smith test function}
The purpose of this appendix is to provide a proof of Theorem \ref{th:MPS1}. 
\begin{proof}[Proof of Theorem \ref{th:MPS1}]
    Let us define $f(n)=1_{B_1}+1_{B_2}$ so that we aim to show that $\lVert \hat{f}+E\rVert_1\gg \log K$ under the assumption that $\lVert E\rVert_2\leqslant |B_1|^{1/2}/K$. To prove this, we use the method of McGehee-Pigno-Smith to construct a test function $\Phi$ satisfying $\lVert \Phi\rVert_\infty \ll 1$ and $\langle \Phi, \hat{f}+E\rangle\gg \log K$.

    \medskip
    
Let us define the sets $A_1,A_2,\dots,A_J$ to be subsets of $B_1\cup B_2$ satisfying the following:
    \begin{itemize}
        \item $A_i$ consists of the $100^i\lfloor|B_1\cup B_2|/K^2\rfloor$ smallest integers in $(B_1\cup B_2)\setminus\left(\cup_{j=1}^{i-1}A_i\right)$. In particular, $|A_{i+1}|=100|A_{i}|$ for all $i\in [J]$.
        \item $J\gg \log K$.
    \end{itemize}
    We define for each $i\in [J]$ the basic function $g_i:A_i\to\mathbf{C}$ by $g_i(n)=|A_i|^{-1}1_{\{n\in A_i\}}$. As usual, this has the following properties
\begin{align}
    \supp(g_i)&\subseteq A_i\label{eq:inneprod}\\
    \lVert \hat{g}_i\rVert_\infty&\leqslant 1\nonumber\\
    \langle \hat{f},\hat{g}_i\rangle &\geqslant 1\nonumber.    
\end{align}
The function $|\hat{g}_i(x)|=\sum_{n\in\mathbf{Z}}c_i(n)e(nx)$ is even so has a Fourier series with $c_i(n)=c_i(-n)$. We then define for each $i\in [J]$ the following function
\begin{equation*}
    h_i(x)=c_i(0)+2\sum_{n<0}c_i(n)e(nx)
\end{equation*}
and we also define the `correction' function $Q_i(x)=\exp(-h_i(x))$ and we note the following properties.
\begin{lemma}\label{lem:correction}
    The function $h_i$ satisfies
    \begin{align*}
        \Re \left(h_i(x)\right)&= |\hat{g}_i(x)|\\
        \supp \hat{h}_i&\subset \mathbf{Z}_{\leqslant 0}\\
        \lVert h_i\rVert_2&\leqslant  2\lVert \hat{g}_i\rVert_2.
        \end{align*}
    The function $Q_i$ satisfies
    \begin{align*}
        \supp \hat{Q}_i&\subset \mathbf{Z}_{\leqslant 0}\\
        |Q_i(x)|&\leqslant 1\\
        |1-Q_i(x)|&\leqslant |h_i(x)|.
        \end{align*}
\end{lemma}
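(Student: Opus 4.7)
The plan is to dispatch the three $h_i$ assertions by direct Fourier-series manipulation, and then deduce the properties of $Q_i = e^{-h_i}$ from elementary exponential inequalities plus one approximation step. Since $|\hat{g}_i|$ is a real-valued function, the identity $c_i(n) = c_i(-n)$ forces each $c_i(n)$ to be real, so
\[
|\hat{g}_i(x)| = c_i(0) + \sum_{n<0} c_i(n)\bigl(e(nx)+e(-nx)\bigr),
\]
and comparing with the definition $h_i(x) = c_i(0) + 2\sum_{n<0}c_i(n)e(nx)$ reads off $\Re h_i = |\hat{g}_i|$. The Fourier support of $h_i$ is contained in $\mathbf{Z}_{\leqslant 0}$ by construction. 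Parseval then gives
\[
\lVert h_i\rVert_2^2 = c_i(0)^2 + 4\sum_{n<0}c_i(n)^2 \leqslant 2\Bigl(c_i(0)^2 + 2\sum_{n<0}c_i(n)^2\Bigr) = 2\lVert \hat{g}_i\rVert_2^2,
\]
so $\lVert h_i\rVert_2 \leqslant \sqrt{2}\,\lVert \hat{g}_i\rVert_2 \leqslant 2\lVert \hat{g}_i\rVert_2$, as claimed.

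For $Q_i = e^{-h_i}$, the uniform bound $|Q_i(x)| = e^{-\Re h_i(x)} = e^{-|\hat{g}_i(x)|} \leqslant 1$ is immediate from the first part. The pointwise inequality $|1 - Q_i(x)| \leqslant |h_i(x)|$ reduces to $|1 - e^{-z}| \leqslant |z|$ whenever $\Re z \geqslant 0$, which follows by writing $1 - e^{-z} = \int_0^1 z\,e^{-tz}\,dt$ and noting $|e^{-tz}| = e^{-t\Re z}\leqslant 1$ on $[0,1]$.

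The main (mildly subtle) obstacle is the support claim $\supp \hat{Q}_i \subset \mathbf{Z}_{\leqslant 0}$, since $h_i$ itself need not be bounded and the exponential power series need not converge naively. I would handle this by approximating $h_i$ with its Fej\'er means $\sigma_N h_i$: each $\sigma_N h_i$ is a trigonometric polynomial whose Fourier support remains in $\mathbf{Z}_{\leqslant 0}$, and crucially $\Re \sigma_N h_i = \sigma_N|\hat{g}_i| \geqslant 0$ because the Fej\'er kernel is non-negative and so Fej\'er means of a non-negative function are non-negative. The series $e^{-\sigma_N h_i} = \sum_{k\geqslant 0}(-\sigma_N h_i)^k/k!$ then converges uniformly (as $\sigma_N h_i$ is bounded), and each $(\sigma_N h_i)^k$ is a trigonometric polynomial with Fourier support in $\mathbf{Z}_{\leqslant 0}$, so $e^{-\sigma_N h_i}$ inherits that support. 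Since $\sigma_N h_i \to h_i$ in $L^2$ and a.e.\ along a subsequence while $|e^{-\sigma_N h_i}| \leqslant 1$ uniformly, dominated convergence yields $e^{-\sigma_N h_i} \to Q_i$ in $L^2$, and the support condition passes to the limit. The key to this last step is using Fej\'er means rather than raw Fourier truncations, precisely because the former preserve non-negativity of the real part and thereby keep $|e^{-\sigma_N h_i}|$ uniformly bounded by $1$, which is what makes the dominated-convergence argument legitimate.
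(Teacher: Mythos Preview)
Your proof is correct and, for the properties of $h_i$ and the first two properties of $Q_i$, follows essentially the same line as the paper (with the minor bonus that your Parseval computation yields the sharper constant $\sqrt{2}$ rather than $2$).

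The one genuine difference is your treatment of $\supp \hat{Q}_i \subset \mathbf{Z}_{\leqslant 0}$. The paper simply expands $Q_i = \sum_{k\geqslant 0}(-h_i)^k/k!$ and asserts that each term has non-positive spectrum; this is morally right but glosses over the issue you identified, namely that $h_i$ is not a priori bounded (it is essentially the analytic completion of $|\hat{g}_i|$, and conjugate functions of bounded functions need not be bounded), so the power series need not converge pointwise and taking Fourier coefficients term-by-term requires justification. Your Fej\'er-mean argument cleanly sidesteps this: by approximating with $\sigma_N h_i$ you keep both the one-sided spectrum and the inequality $\Re \sigma_N h_i \geqslant 0$ (the latter because Fej\'er means preserve non-negativity), so that $|e^{-\sigma_N h_i}|\leqslant 1$ uniformly and dominated convergence applies. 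This is a more rigorous route to the same conclusion. One could also rescue the paper's direct argument by observing that in the present setting $\hat{g}_i$ is a trigonometric polynomial, so $|\hat{g}_i|$ is Lipschitz and its conjugate function is therefore continuous, making $h_i$ bounded; but your approach has the virtue of not relying on this extra regularity.
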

\begin{proof}
Consider the Fourier expansions $|\hat{g}_i(x)|=\sum_{n\in\mathbf{Z}}c_i(n)e(nx)$ and 
    \begin{align*}
        h_i(x)=c_i(n)+2\sum_{n< 0}c_i(n)e(nx).
    \end{align*}
    One can see from this and that $c_i(n)=c_i(-n)$ that $\Re \left(h_i(x)\right)=\sum_{n\in\mathbf{Z}}c_ne(nx)= |\hat{g}_i(x)|$, that $\supp \hat{h}_i\subset \mathbf{Z}_{\leqslant 0}$ and that $\Vert h_i\rVert_2\leqslant 2\lVert \hat{g}_i\rVert_2$. Now let us consider $Q_i(x)=e^{-h_i(x)}$. That $|Q_i(x)|\leqslant 1$ follows from the bound $|Q_i(x)|=e^{-\Re h_i(x)}=e^{-|\hat{g}_i(x)|}$. To show that $|1-Q_i(x)|\leqslant|h_i(x)|$ we may take $z=h_i(x)$ in the simple inequality $|1-e^{-z}|\leqslant |z|$ which is valid for all $z\in\mathbf{C}$ with $\Re(z)\geqslant 0$.\footnote{One can prove this by observing that $1-e^{-z}=\int_0^ze^{-w}\,dw$ and that $|e^{-w}|\leqslant 1$ for all $w$ on a straight line path from $0$ to $z$ if $\Re z\geqslant 0$.} Finally, $Q_i(x)=e^{-h_i(x)}=\sum_{k\geqslant0}(-h_i(x))^k/k!$ has a Fourier expansion whose terms all have non-positive frequencies because $\supp \hat{h}_i\subset\mathbf{Z}_{\leqslant 0}$ and hence $\supp (h_i^k)^\wedge\subset\mathbf{Z}_{\leqslant 0}$. This shows that $\supp \hat{Q}_i\subset\mathbf{Z}_{\leqslant 0}$.

\end{proof}
We now use the iterative McGehee-Pigno-Smith construction
\begin{align*}
    \Phi_1(x)&=\hat{g}_1(x)\\
    \Phi_{j+1}(x)&=\hat{g}_{j+1}(x)+Q_{j+1}\Phi_j(t),
\end{align*}
and the same proof as in Lemma \ref{lem:basicMPS} may be used to deduce that $\lVert \Phi_i\rVert_\infty\leqslant 10$ for all $j\leqslant J$. A telescoping identity shows that
\begin{align*}\Phi_J(x)&=\hat{g}_J+Q_J\hat{g}_{J-1}+\dots+Q_2\dots Q_J\hat{g}_1\\
&=\sum_{j=1}^J\hat{g}_j(x)-\sum_{j=1}^{J-1}\sum_{k=j+1}^J\hat{g}_j(x)(1-Q_k)Q_{k+1}\dots Q_J.
\end{align*}
As $\lVert \Phi_J\rVert_\infty\leqslant 10$, we have
\begin{align}
    \lVert \hat{f} + E\rVert_1&\gg \langle \hat{f} +E,\Phi_J\rangle\nonumber\\
    &= \sum_{j=1}^J\langle \hat{f}, \hat{g}_j\rangle-E_1+E_2\nonumber\\
    &\geqslant J-|E_1|-|E_2|,\label{J,Eboundenergy}
\end{align}
where we used the last equation in \eqref{eq:inneprod} to get $\langle \hat{f}, \hat{g}_j\rangle\geqslant 1$ for each $j$, and where we defined \begin{align*}E_1&=\sum_{1\leqslant j<k\leqslant J}\langle \hat{f}, \hat{g}_j(1-Q_{k})Q_{k+1}\dots Q_{J}\rangle\\
E_2&= \sum_{j=1}^J\langle E, \hat{g}_jQ_{j+1}\dots Q_J\rangle.
\end{align*} We proceed by bounding $E_1, E_2$. To bound $E_2$ we simply recall that $|Q_j|\leqslant 1$ so that by Cauchy-Schwarz $|\langle E, \hat{g}_j Q_{j+1}\dots Q_J\rangle| \leqslant \lVert E\rVert_2 \lVert \hat{g}_j\rVert_2\leqslant100^{-j/2}$ as we are assuming that $\lVert E\rVert_2\leqslant |B_1|^{1/2}/K$ while $\lVert \hat{g}_j\rVert_2=|A_j|^{-1/2}\leqslant 100^{-j/2}|B_1\cup B_2|^{-1/2}K$ by our choice of the $A_j$. Hence, $|E_2|<1$.

\medskip

Bounding $E_1$ follows the classical McGehee-Pigno-Smith argument. By Lemma \ref{lem:correction}, the Fourier transform of $(1-Q_{k})Q_{k+1}\dots Q_{J}$ is supported on $\mathbf{Z}_{\leqslant 0}$ and hence
\begin{align*}
    \supp \left(\hat{g}_j(1-Q_{k})Q_{k+1}\dots Q_{J}\right)^{\wedge}\subset \mathbf{Z}\cap(-\infty,\max A_{j}].
\end{align*}
We therefore get 
\begin{align*}
&\langle \hat{f}, \hat{g}_j(1-Q_k)Q_{k+1}\dots Q_J\rangle\\
&= \langle \sum_{n\in B_1:n\leqslant \max A_j} e(nx) + \sum_{n\in B_2: n\leqslant \max A_j} e(nx), \hat{g}_j(1-Q_k)Q_{k+1}\dots Q_J\rangle\\
&\leqslant 2|(B_1\cup B_2)\cap(-\infty,\max A_j]|^{1/2}\lVert h_k\rVert_2 
\end{align*}
by Cauchy-Schwarz and Lemma \ref{lem:correction}. Using that $|(B_1\cup B_2)\cap (-\infty,\max A_j]|\leqslant 101|A_j|/100$ by our choice of the $A_j$, and that $\lVert h_k\rVert_2\leqslant 2|A_k|^{-1/2}$, we obtain the bound $5|A_j|^{1/2}|A_k|^{-1/2}=5\cdot 100^{-(k-j)/2}$ for the inner product above. In total, we get
$$|E_1|\leqslant 5\sum_{1\leqslant j<k\leqslant J} 100^{-(k-j)/2}\leqslant 5J/9.$$ Finally, we may substitute this estimate in \eqref{J,Eboundenergy} and recall that $J\gg \log K$ to obtain
\begin{align*}
    \lVert \hat{f}+E\rVert_1\gg 4J/9-1\gg \log K,
\end{align*}
which is the required conclusion.

\end{proof}

\bibliographystyle{plain}
\bibliography{referencesSum-free}
\bigskip

\noindent
{\sc Mathematical Institute, Andrew Wiles Building, University of Oxford, Radcliffe
Observatory Quarter, Woodstock Road, Oxford, OX2 6GG, UK.}\newline
\href{mailto:benjamin.bedert@magd.ox.ac.uk}{\small benjamin.bedert@maths.ox.ac.uk}

\end{document}